\numberwithin{equation}{section}
\newtheorem{theorem}{Theorem}[section]
\newtheorem{proposition}[theorem]{Proposition}
\newtheorem{corollary}[theorem]{Corollary}
\newtheorem{conjecture}[theorem]{Conjecture}
\newtheorem{lemma}[theorem]{Lemma}
\theoremstyle{definition}
\newtheorem{remark}[theorem]{Remark}
\newtheorem*{lbt}{The Lower Bound Theorem}
\DeclareMathOperator{\skel}{Skel}
\DeclareMathOperator\lk{\mathrm{lk}}
\DeclareMathOperator\st{\mathrm{st}}
\DeclareMathOperator{\conv}{\mathrm{conv}}
\DeclareMathOperator{\rank}{rank}
\newcommand{\R}{{\mathbb R}}
\newcommand{\Stress}{{\mathcal S}}
\newcommand{\C}{{\mathcal C}}
\newcommand{\p}{{\mathbf p}}
\newcommand{\m}{{\mathbf m}}
\newcommand{\q}{{\mathbf q}}
\newcommand{\halfd}{\lfloor \frac{d}{2} \rfloor}
\DeclareMathOperator{\Rig}{Rig}
\title{A lower bound theorem for centrally symmetric simplicial polytopes}
\author[1]{Steven Klee}
\author[2]{Eran Nevo}
\author[3]{Isabella Novik}
\author[4]{Hailun Zheng\thanks{Research of Klee is partially supported by NSF grant DMS-1600048, of Nevo by Israel Science Foundation grant ISF-1695/15 and by grant 2528/16 of the ISF-NRF Singapore joint research program, of Novik by  NSF grant DMS-1361423 and DMS-1664865, and of Zheng by graduate fellowships from NSF grant DMS-1361423}}
\affil[1]{\small{Department of Mathematics, Seattle University, 901 12th Avenue, Seattle, WA 98122, USA}}
\affil[2]{\small{Einstein Institute of Mathematics, The Hebrew University of Jerusalem, Jerusalem 91904, Israel}}
\affil[3]{\small{Department of Mathematics, University of Washington, Box 354350, Seattle, WA 98195, USA}}
\affil[4]{\small{Department of Mathematics, University of Michigan, 530 Church Street, Ann Arbor, MI 48109, USA}}
\begin{document}
%%%%%%%%%%%%%%%%%%%%%%%%%%%%%%%%%%%%%%%%%%%%
%%%%%%%%%%%%%%%%%%%%%%%%%%%%%%%%%%%%%%%%%%%%

\maketitle

%\tableofcontents

\begin{abstract}
Stanley proved that for any centrally symmetric simplicial $d$-polytope $P$ with $d\geq 3$, $g_2(P) \geq {d \choose 2}-d$. We provide a characterization of centrally symmetric simplicial $d$-polytopes with $d\geq 4$ that satisfy this inequality as equality.  This gives a natural generalization of the classical Lower Bound Theorem for simplicial polytopes to the setting of centrally symmetric simplicial polytopes.
\end{abstract}

%%%%%%%%%%%%%%%%%%%%%%%
%%%%%%%%%%%%%%%%%%%%%%%
\section{Introduction}

An important invariant in the study of face numbers of simplicial $d$-polytopes and, more generally $(d-1)$-dimensional simplicial complexes, is $g_2:=f_1-df_0+{d+1 \choose 2}$, where $f_1$ and $f_0$ denote the number of edges and the number of vertices, respectively. In this paper we study this invariant for the class of {\em centrally symmetric} simplicial polytopes. We write {\em cs} for centrally symmetric. Our main result is a characterization of cs simplicial $d$-polytopes for which $g_2$ is minimized. The motivation for this work is the classical Lower Bound Theorem.

\begin{lbt}
Let $P$ be a simplicial $d$-polytope with $d\geq 4$.  Then $g_2(P) \geq 0$, with equality if and only if $P$ is stacked.
\end{lbt}

A polytope is \textit{stacked} if it can be obtained from the $d$-simplex by repeatedly attaching (shallow) $d$-simplices along facets.  The $d=4$ case of the Lower Bound Theorem is due to Walkup \cite{Walkup}. The nonnegativity of $g_2$ for arbitrary $d$ was originally proved by Barnette \cite{Barnette-LBT}.  Billera and Lee \cite{Billera-Lee} proved that the equality $g_2(P) = 0$ holds if and only if $P$ is stacked. In fact, as was established in works of Walkup \cite{Walkup}, Barnette \cite{Barnette-LBT-pseudomanifolds}, Kalai \cite{Kalai-rigidity}, Fogelsanger \cite{Fogelsanger}, and Tay \cite{Tay}, the same result holds in the generality of all $(d-1)$-dimensional simplicial complexes whose geometric realizations are closed, connected manifolds or even normal pseudomanifolds.

Much less is known for cs simplicial complexes. Stanley \cite{Stanley-cs} (answering an unpublished conjecture of Bj\"orner) proved that if $P$ is a cs simplicial $d$-polytope ($d \geq 3$), then $g_2(P) \geq {d \choose 2} - d$, and more generally  that $g_r(P) \geq {d \choose r} - {d \choose r-1}$ for all $1 \leq r \leq \halfd$. However, in the thirty years since then, a characterization of cs simplicial $d$-polytopes with $g_2 = {d \choose 2} - d$ (for $d\geq 4$) has not been established, nor has any progress been made on whether the inequality $g_2(P) \geq {d \choose 2} - d$ continues to hold for cs simplicial spheres.

The goal of this paper is to at least partially remedy this situation by characterizing  cs simplicial $d$-polytopes for which $g_2 = {d \choose 2} - d$.  The characterization strongly parallels that of the classical non-cs case:  In the classical setting, a $d$-simplex has the minimal number of faces among all simplicial $d$-polytopes and the stacking operation does not change $g_2$.  In the cs case, the $d$-dimensional cross-polytope has the minimal number of faces among all cs $d$-polytopes, but (arbitrary) stacking may destroy the condition of central symmetry.  However, the  \textit{symmetric stacking} operation, i.e., repeatedly attaching simplices along \textit{antipodal pairs} of facets, will preserve both central symmetry and $g_2$.  We will show that any cs simplicial $d$-polytope for which $g_2 = {d \choose 2}-d$ is obtained from the cross-polytope in this way.  We will use $\C^*_d$ to denote the $d$-dimensional cross-polytope.

\begin{theorem} \label{main-thm}
Let $P$ be a cs simplicial $d$-polytope with $d \geq 4$.  Then $g_2(P) = {d \choose 2}-d$ if and only if $P$ is obtained from $\C^*_d$ by symmetric stacking.
\end{theorem}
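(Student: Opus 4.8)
The plan is to prove the two implications separately. The ``if'' direction is a direct computation: $\C^*_d$ has $2d$ vertices and $\binom{2d}{2}-d$ edges, so $g_2(\C^*_d) = \binom{2d}{2} - d - 2d^2 + \binom{d+1}{2} = \binom{d}{2} - d$, and one symmetric stacking adds $2$ vertices and $2d$ edges --- the two new apices, each joined to the $d$ vertices of one facet in an antipodal pair of facets --- so it changes $f_1 - df_0$ by $2d - 2d = 0$. Hence every polytope obtained from $\C^*_d$ by symmetric stacking has $g_2 = \binom{d}{2}-d$; I also record that such a polytope satisfies $f_1 = d(f_0 - 2)$.

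For the ``only if'' direction I would induct on $f_0(P)$. In the base case $f_0(P) = 2d$, a cs simplicial $d$-polytope with $2d$ vertices has $d$ antipodal pairs of vertices with linearly independent representatives (else all vertices lie in a proper subspace), so it is affinely, hence combinatorially, $\C^*_d$. For the inductive step, assume $f_0(P) > 2d$ and $g_2(P) = \binom{d}{2}-d$. The crux is to produce a vertex $v$ of $P$ of degree $d$. Granting this, $\lk_P(v) = \partial\sigma$ for a $(d-1)$-simplex $\sigma$; since no face of the cs polytope $P$ contains both $v$ and $-v$, deleting $v$ does not disturb $\lk_P(-v)$, and applying the standard destacking move (delete the vertex, insert the facet) to both $v$ and $-v$ yields a cs simplicial $d$-polytope $P'$ with $f_0(P') = f_0(P)-2$ and $f_1(P') = f_1(P)-2d$, so that $g_2(P') = g_2(P) = \binom{d}{2}-d$ and $P$ is the symmetric stacking of $P'$ along $\{\sigma,-\sigma\}$. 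By induction $P'$, and therefore $P$, is obtained from $\C^*_d$ by symmetric stacking.

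Everything thus reduces to the claim that any cs simplicial $d$-polytope $P$ with $d\ge 4$, $g_2(P) = \binom{d}{2}-d$, and $f_0(P) > 2d$ has a vertex of degree $d$; this is where I expect the main difficulty to lie, since averaging alone only forces the mean degree into $(2d-2,2d)$. The approach I would take is through rigidity: by the rigidity theorem of Kalai and Fogelsanger the graph $G(P)$ of $P$ is generically $d$-rigid, so a generic embedding of $G(P)$ carries a self-stress space of dimension exactly $g_2(P) = \binom{d}{2}-d$. Passing to a generic \emph{centrally symmetric} embedding, the $\Z/2$-action splits this space into symmetric and antisymmetric parts; I would try to show that the cross-polytope-type stresses present in every cs framework already fill the symmetric part, so that $g_2(P) = \binom{d}{2}-d$ forces the antisymmetric part to vanish --- the rigidity shadow of ``built by symmetric stacking.'' Feeding this into the classical Lower Bound Theorem for the vertex links $\lk_P(v)$, which are simplicial $(d-1)$-spheres with $d-1\ge 3$ and hence satisfy $g_2(\lk_P v)\ge 0$ with equality precisely when the link is stacked, together with a comparison of $g_2(P)$ with the $g_2$'s of these links, should force some link to be $\partial\Delta^{d-1}$, i.e. some vertex to have degree $d$. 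The technical heart is to make this passage between the global (cs-)rigidity count and the local structure of the links precise, and to account correctly for the $\binom{d}{2}-d$ stresses that every cs embedding carries for free.
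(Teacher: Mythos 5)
Your ``if'' direction and the overall induction scheme (destack at an antipodal pair of degree-$d$ vertices, i.e.\ cut along an antipodal pair of missing facets) match the paper's reduction, and your base case $f_0(P)=2d$ is the same fact the paper uses. But the proof has a genuine gap exactly where you flag it: you never prove that a cs minimizer with $f_0(P)>2d$ has a vertex of degree $d$ (equivalently, that a \emph{prime} cs simplicial $d$-polytope with $g_2={d\choose 2}-d$ must be $\C^*_d$). That claim is the entire content of the theorem, and the sketch you give does not deliver it. Two concrete problems: (a) you want to invoke Kalai--Fogelsanger generic rigidity and then ``pass to a generic centrally symmetric embedding,'' but a cs embedding is never generic (its coordinates satisfy $\p(-v)=-\p(v)$), so generic rigidity of $G(P)$ says nothing about the stress space of a cs framework; the paper instead works with the actual polytope embedding and uses Whiteley's theorem that it is infinitesimally rigid, after which the Sanyal--Werner--Ziegler count does give that \emph{all} stresses are symmetric when $g_2={d\choose 2}-d$. (b) Even granting that symmetry of stresses, the step ``feed this into the Lower Bound Theorem for the links and force some link to be $\partial\Delta^{d-1}$'' is not an argument, and no simple comparison of $g_2(P)$ with the links' $g_2$'s is known to produce a degree-$d$ vertex. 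In the prime case the paper proves statements of the opposite flavor --- links of vertices are prime (via Swartz's operation and a delicate rigidity argument for the modified complex), and no face of size at most $d-2$ has simplex-boundary link, so in particular links of small faces are never stacked --- and then must show that antipodal vertex stars share exactly $2d-2$ vertices and that $G(\st(u)\cup\st(-u))=G(P)$, before a counting argument forces $f_0(P)=2d$. That multi-step analysis of missing faces and symmetric stresses is the technical heart (Sections 5--7 of the paper), and nothing in your sketch substitutes for it.

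A further sign that the degree-$d$-vertex claim is not cheaply available: for $d=4$ the paper cannot run its general argument and instead invokes Zheng's classification of prime polytopes with $g_2=2$ plus a stellar-weld argument to rule out the non-cross-polytope case; your induction would need the same claim uniformly in $d\ge 4$, including this case. Also, in your inductive step you should justify that $\conv\bigl(V(P)\setminus\{v,-v\}\bigr)$ is again a \emph{simplicial} cs polytope with the destacked boundary complex (special position of the vertices of the missing facet can create non-simplicial facets); the paper's formulation via connected sums along antipodal missing facets handles this reduction, but it deserves a sentence in your write-up.
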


As in the classical case, the part of the theorem asserting that polytopes obtained from $\C^*_d$ through symmetric stacking have $g_2 = {d \choose 2}-d$ is immediate from the fact that, for $d\geq 4$, the stacking operation does not affect $g_2$ and $g_2(\C^*_d) = {d \choose 2}-d$. Thus, in the rest of the paper we concentrate on the other implication. The tools we use are from the rigidity theory of frameworks. The vertices and edges of a convex simplicial $d$-polytope provide a framework in $\R^d$ that is infinitesimally rigid by a theorem of Whiteley \cite{Whiteley-84}. Furthermore, it follows from work of Stanley \cite{Stanley-cs} and Lee \cite{Lee-94}, along with more recent work of Sanyal, Werner, and Ziegler {\cite[Thm.~2.1]{Sanyal-et-al}}, that if $P$ is a cs simplicial $d$-polytope with $g_2(P)= {d \choose 2}-d$, then {\em all} stresses on $P$ must be symmetric (see Section \ref{section:3}). Our main strategy in proving Theorem \ref{main-thm} will be to use the symmetry of stresses to understand the missing faces of $P$ and its links.

The rest of the paper is structured as follows. In Section 2, after reviewing basic definitions related to simplicial complexes and simplicial polytopes, we introduce the rigidity theory of frameworks and summarize several important results on the infinitesimal rigidity of polytopes. In Section 3, we establish the lower bound on $g_2$ for rigid cs frameworks. In Section 4, we state a key technical result, Theorem \ref{main-pre-thm}, and prove Theorem \ref{main-thm} under the assumption that Theorem \ref{main-pre-thm} holds. Then in Sections 5, 6, and 7 we establish a sequence of results that lead to a proof of Theorem \ref{main-pre-thm}. We close in Section 8 with some open questions.

%%%%%%%%%%%%%%%%%%%%%%%
%%%%%%%%%%%%%%%%%%%%%%%
\section{Preliminaries}
\subsection{Polytopes and simplicial complexes}

An (abstract) {\em simplicial complex} $\Delta$ with vertex set $V=V(\Delta)$ is a {non-empty} collection of subsets of $V$ that is closed under inclusion. The elements of $\Delta$ are called {\em faces}. The dimension of a face $\tau\in\Delta$ is $\dim\tau:=|\tau|-1$, and the dimension of $\Delta$, $\dim\Delta$, is the maximum dimension of any of its faces. The \textit{facets} of $\Delta$ are maximal faces of $\Delta$ under inclusion. We say that $\Delta$ is \textit{pure} if all of its facets have the same dimension. One example of a simplicial complex on $V$ is the $(|V|-1)$-dimensional simplex $\overline{V}:=\{\tau \ : \ \tau\subseteq V\}$; another example is  the {\em boundary} of this simplex defined as $\partial\overline{V}:=\overline{V}\setminus\{V\}$.

If $\tau$ is a face of a simplicial complex $\Delta$, then the {\em star of $\tau$} and the  {\em link of $\tau$ in $\Delta$} are defined as $\st_\Delta(\tau)=\st(\tau):=\{\sigma\in\Delta \ : \  \sigma\cup\tau\in\Delta\}$ and $\lk_\Delta(\tau)=\lk(\tau):= \{\sigma\in \st(\tau) \ : \ \sigma\cap\tau=\emptyset\}$, respectively. For a vertex $v$ of $\Delta$, we write $\st(v)$ and $\lk(v)$ instead of  $\st(\{v\})$ and $\lk(\{v\})$. { Following terminology introduced by Perles, see for instance \cite{AltPer}, we say that a} set $\sigma \subseteq V(\Delta)$ is a {\em missing face} of $\Delta$ if $\sigma$ is not a face of $\Delta$, but every proper subset of $\sigma$ is a face;  a {\em missing facet} of $\Delta$ is a missing face of size $1+\dim\Delta$. A pure simplicial complex $\Delta$ is \emph{prime} if it does not have any missing facets. { (Missing faces are also known in the literature as empty simplices, minimal non-faces, and hollow faces.)}

Most of {the} simplicial complexes we will consider arise from polytopes. All polytopes considered in this paper are {\em convex} polytopes.  We refer our readers to Ziegler's book \cite{Ziegler} for more background on this fascinating field. Recall that a {\em face} of a polytope $P$ is the intersection of $P$ with a supporting hyperplane. We denote by $V(\tau)$ the vertex set of a face $\tau$ of $P$.

To any simplicial complex $\Delta$ there is an associated topological space $\Vert\Delta\Vert$ called {a} \emph{geometric realization} of $\Delta$.  A $(d-1)$-dimensional simplicial complex $\Delta$ is a \emph{simplicial $(d-1)$-sphere} (respectively, a \emph{simplicial $(d-1)$-ball}) if its geometric realization $\Vert\Delta\Vert$ is homeomorphic to a sphere (respectively, a ball) of dimension $d-1$.  If $P$ is a {\em simplicial} $d$-polytope (i.e., all proper faces of $P$ are geometric simplices), then  the collection of the vertex sets of all the faces of $P$ (except $P$ itself) is a {simplicial $(d-1)$-sphere} called the {\em boundary complex} of $P$; it is denoted by $\partial P$. When talking about the stars and the links of faces in $P$ we mean the stars and the links of the corresponding faces in $\partial P$. Thus, for a face $\tau$ of $P$, $\st_P(\tau)$ and $\lk_P(\tau)$ are a simplicial ball and simplicial sphere, respectively. If $P$ is fixed or understood, we will simply write $\st(\tau)$ and $\lk(\tau)$.

The link of $\tau$ in $P$ is the boundary complex of a polytope. When $\tau=\{u\}$ is a vertex, one such polytope is obtained by { intersecting $P$ with a hyperplane that strictly separates $u$ from the other vertices}; this polytope is called the {\em vertex figure of~$u$}.

If a simplicial $d$-polytope $P$ is the union of two simplicial $d$-polytopes $Q$ and $R$ that share a common facet $\tau$ but whose interiors are disjoint, we write $P=Q\#_{\tau} R$, or simply $P=Q\# R$; in this case $\partial P$ is the usual connected sum of $\partial Q$ and $\partial R$, glued along the boundary of $\tau$: $\partial Q\#_{\partial\overline{\tau}} \partial R$. A simplicial $d$-polytope $P$ is called {\em stacked} if there are $d$-simplices $S_1, S_2,\ldots,S_t$ such that $P=S_1\#\cdots\# S_t$. The boundary complex of a stacked $d$-polytope is called a \emph{stacked $(d-1)$-sphere}.

If $\Gamma$ and $\Delta$ are simplicial complexes on disjoint vertex sets, their \textit{join} is the simplicial complex $\Gamma*\Delta = \{\sigma \cup \tau \ : \ \sigma \in \Gamma \text{ and } \tau \in \Delta\}$. When $\Gamma = \{\emptyset, \{u\}\}$ consists of a single vertex, we write $u * \Delta$ to denote the \textit{cone} over $\Delta$.

A $d$-polytope $P\subset \R^d$ is \emph{centrally symmetric}, or {\em cs} for short, if $P=-P$; that is, $x \in P$ if and only if $-x \in P$. In the same spirit, a simplicial complex $\Delta$ is \textit{centrally symmetric} or {\em cs} if it is endowed with a {\em free involution} $\alpha: V(\Delta) \rightarrow V(\Delta)$ that induces a free involution on the set of all non-empty faces of $\Delta$ (i.e., $\alpha(\tau)\in\Delta$ and $\alpha(\tau)\neq \tau$ for all nonempty faces $\tau \in \Delta$). For brevity, we write $\alpha(\tau)=-\tau$ and refer to $\tau$ and $-\tau$ as {\em antipodal faces}.
One example of a cs $d$-polytope is the {\em cross-polytope}, $\C^*_d$, defined as the convex hull of $\{\pm p_1,\pm p_2,\ldots, \pm p_d\}$; here $p_1,p_2,\ldots, p_d$ are points in $\R^d$ whose position vectors form a basis for $\R^d$.

%%%%%%%%%%%%%%%%%%%%%%%
%%%%%%%%%%%%%%%%%%%%%%%
\subsection{Infinitesimal rigidity of frameworks}

This section is a summary of some notions and results  pertaining to graph rigidity. Asimow and Roth provide a very readable introduction to this subject in \cite{AsimowRothI} and \cite{AsimowRothII}; see also Lee's notes on the $g$-theorem \cite[Section 6]{Lee-notes}.

Let {$G = (V(G),E(G))$} be a finite graph.  A map $\p: V(G) \rightarrow \mathbb{R}^d$ is called a \textit{$d$-embedding} of $G$, or just an \textit{embedding} of $G$ if $d$ is fixed or understood.   The graph $G$, together with a $d$-embedding $\p$, is called a \textit{framework} in $\R^d$, where the edges are viewed as rigid bars and the vertices are viewed as joints.

An \emph{infinitesimal motion} of $\R^d$ is a {continuous} map $\mathbf{\Psi}: \R^d\to \R^d$ such that for any two points $x,y\in \R^d$,
\[\frac{d}{dt}\Bigr|_{t=0}\big\|(x+t\mathbf{\Psi}(x))-(y+t\mathbf{\Psi}(y))\big\|^2=0.\]
In fact, every infinitesimal motion $\mathbf{\Psi}$ of $\R^d$ has the form $\mathbf{\Psi}(x)=Ax+b$, where $A$ is a $d\times d$ orthogonal matrix and $b$ is a translation vector. Similarly, an infinitesimal motion of a framework $(G,\p)$ is a map $\m: V(G)\to \R^d$ such that for any edge $\{u,v\}$ in $G$,
\[\frac{d}{dt}\Bigr|_{t=0}\big\|(\p(u)+t\m(u))-(\p(v)+t\m(v))\big\|^2=0.\]
A framework $(G, \p)$ is called \emph{infinitesimally $d$-rigid} if every infinitesimal motion $\m$ of $(G,\p)$ is induced by some infinitesimal motion $\mathbf{\Psi}$ of $\R^d$, that is, $\m=\mathbf{\Psi}\circ \p$. The prefix ``d-" is sometimes omitted when the context is clear.

{We let $f_0(G):=|V(G)|$ and $f_1(G):=|E(G)|$.} The \emph{rigidity matrix} $\Rig(G, \p)$ of a framework $(G, \p)$ is defined as follows: it is an $f_1(G) \times df_0(G)$ matrix with rows labeled by edges of $G$ and columns grouped in blocks of size $d$, with each block labeled by a vertex of $G$; the row corresponding to $\{u,v\}\in E(G)$ contains the vector $\p(u)-\p(v)$ in the block of columns corresponding to $u$, the vector $\p(v)-\p(u)$ in columns corresponding to $v$, and zeros everywhere else.

A \textit{stress} on $(G,\p)$ is an assignment of weights $\omega = (\omega_e \ : \ e \in E(G))$ to the edges of $G$ such that for each vertex $v$, $$\sum_{u\ : \ \{u,v\} \in E(G)} \omega_{\{u,v\}}(\p(v)-\p(u)) = \mathbf{0}.$$ It follows from the above definitions that stresses on $(G,\p)$ correspond to elements in the kernel of $\Rig(G, \p)^T$, that is, stresses can be viewed as linear dependences among the rows of the rigidity matrix. We denote the space of all stresses on $(G,\p)$ by $\Stress(G,\p)$.

The following fundamental fact is an easy consequence of the Implicit Function Theorem (see \cite{AsimowRothI} and \cite{AsimowRothII}).

{
\begin{lemma}  \label{basic-rig-prop} Let $(G, \p)$ be a framework in $\R^d$ that does not lie in a hyperplane of $\R^d$. Then the following statements are equivalent:
\begin{enumerate}
\item  $(G, \p)$ is infinitesimally rigid in $\R^d$;
\item   $\rank \Rig(G,\p)=df_0(G)-{d+1 \choose 2}$;
\item $\dim \Stress(G,\p) =f_1(G)-df_0(G)+{d+1 \choose 2}$.
\end{enumerate}
\end{lemma}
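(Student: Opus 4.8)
The plan is to reduce the whole statement to the rank--nullity theorem applied to $\Rig(G,\p)$ and to its transpose, the one genuinely geometric input being a dimension count for the space of trivial infinitesimal motions.

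First I would record the linear-algebraic dictionary. Expanding the derivative in the definition of an infinitesimal motion of $(G,\p)$ shows that the condition imposed by an edge $\{u,v\}$ is exactly $\langle \p(u)-\p(v),\,\m(u)-\m(v)\rangle=0$, i.e.\ that the row of $\Rig(G,\p)$ indexed by $\{u,v\}$ annihilates the vector $\m\in\R^{df_0(G)}$. Hence the space $M(G,\p)$ of infinitesimal motions of $(G,\p)$ is precisely $\ker\Rig(G,\p)$. On the other side, by the definition of a stress as a linear dependence among the rows of $\Rig(G,\p)$, we have $\Stress(G,\p)=\ker\Rig(G,\p)^{T}$.

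Next I would pin down the trivial motions. Let $T(G,\p)\subseteq\R^{df_0(G)}$ be the image of the linear map $\mathbf{\Psi}\mapsto\mathbf{\Psi}\circ\p$ on the space of infinitesimal motions of $\R^d$; applying the defining identity of $\mathbf{\Psi}$ to the pairs $\p(u),\p(v)$ with $\{u,v\}\in E(G)$ shows $T(G,\p)\subseteq M(G,\p)$, and by definition $(G,\p)$ is infinitesimally rigid exactly when this inclusion is an equality. The crucial point --- and the only place where the hypothesis that $(G,\p)$ does not lie in a hyperplane enters --- is that $\dim T(G,\p)={d+1\choose 2}$. Indeed, the infinitesimal motions of $\R^d$ form a vector space of dimension ${d+1\choose 2}$, spanned by the $d$ translations and the ${d\choose 2}$ skew-symmetric linear maps (the infinitesimal rotations); and the map $\mathbf{\Psi}\mapsto\mathbf{\Psi}\circ\p$ is injective, since if $x\mapsto Sx+b$ vanishes on $\p(V(G))$ then $S$ kills every difference $\p(u)-\p(v)$, and these span $\R^d$ precisely because the framework affinely spans $\R^d$, forcing $S=0$ and then $b=0$. (Alternatively one may cite this directly from \cite{AsimowRothI,AsimowRothII}.)

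Finally I would assemble the equivalences. Since $T(G,\p)\subseteq M(G,\p)$ and $\dim T(G,\p)={d+1\choose 2}$, condition (1) is equivalent to $\dim M(G,\p)={d+1\choose 2}$; by rank--nullity $\dim M(G,\p)=df_0(G)-\rank\Rig(G,\p)$, which turns this into (2). Rank--nullity for $\Rig(G,\p)^{T}$, together with $\rank\Rig(G,\p)^{T}=\rank\Rig(G,\p)$, gives $\dim\Stress(G,\p)=f_1(G)-\rank\Rig(G,\p)$, and substituting the value of the rank from (2) yields (3), and conversely. I do not expect any real obstacle here: the lemma is ``fundamental'' but elementary, and the only step requiring care is the dimension count $\dim T(G,\p)={d+1\choose 2}$, i.e.\ checking that the infinitesimal isometries of $\R^d$ form a space of that dimension and that evaluation at a full-dimensional configuration is injective. (The Implicit Function Theorem alluded to in the text is needed only for the deeper statement that infinitesimal rigidity implies local rigidity, which is not part of this lemma.)
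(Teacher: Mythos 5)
Your proof is correct. Note that the paper does not actually prove this lemma: it is stated as a known fact and attributed to Asimow--Roth, so there is no in-paper argument to compare against; your write-up is exactly the standard linear-algebra argument underlying that citation (motions $=\ker\Rig(G,\p)$, stresses $=\ker\Rig(G,\p)^{T}$, trivial motions form a $\binom{d+1}{2}$-dimensional subspace because evaluation at an affinely spanning configuration is injective, then rank--nullity on both sides). Two small remarks: first, you are right that the Implicit Function Theorem plays no role in these equivalences --- it enters Asimow--Roth only in relating infinitesimal rigidity to (local) rigidity, so your parenthetical is a sharper attribution than the paper's phrasing; second, your dimension count correctly takes the linear part of an infinitesimal isometry of $\R^d$ to be \emph{skew-symmetric} (whereas the paper's preliminary text loosely says ``orthogonal''), which is the version needed for the space of trivial motions to be a $\binom{d+1}{2}$-dimensional vector space. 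The only step you lean on without proof is that every infinitesimal motion of $\R^d$ is affine with skew-symmetric linear part; since the paper itself asserts this (and it is in the cited references), citing it as you do is acceptable.
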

}

If $(G,\p)$ is a framework in $\R^d$ and $K$ is a subgraph of $G$, we will adopt the (somewhat imprecise) convention of using $(K,\p)$ to denote the restriction of the framework to $K$.  Since $\p$ is defined on $V(G)$, which is a larger set of vertices than $V(K)$, this will not cause any problems. Two standard results in the rigidity theory --- the Gluing and the Cone Lemmas --- will be handy (see, for instance, \cite[Thm.~2]{AsimowRothII} and \cite[Cor.~6.12]{Lee-notes} for the Gluing Lemma, and \cite[Cor.~1.5]{Tay-et-al} for the Cone Lemma).

\begin{lemma} {\rm{(The Gluing Lemma)}} \label{gluing-lemma}
Let $G$ and $G'$ be graphs, and let $(G \cup G',\p)$ be a framework in $\R^d$.  If $(G, \p)$ and $(G', \p)$ are infinitesimally rigid and have $d$ affinely independent vertices in common (i.e., the framework $(G \cap G', \p)$ affinely spans a subspace of dimension at least $d-1$), then $(G \cup G', \p)$ is infinitesimally rigid.
\end{lemma}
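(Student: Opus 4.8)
The plan is to argue straight from the definitions. Let $\m$ be any infinitesimal motion of $(G\cup G',\p)$. Its restriction to $V(G)$ satisfies the edge constraint for every edge of $G$, so it is an infinitesimal motion of $(G,\p)$; by the infinitesimal rigidity of $(G,\p)$ there is an infinitesimal motion $\mathbf{\Psi}$ of $\R^d$ with $\m|_{V(G)}=\mathbf{\Psi}\circ\p$. Likewise, there is an infinitesimal motion $\mathbf{\Psi}'$ of $\R^d$ with $\m|_{V(G')}=\mathbf{\Psi}'\circ\p$. On the $d$ common affinely independent vertices we then have $\mathbf{\Psi}(\p(v))=\m(v)=\mathbf{\Psi}'(\p(v))$. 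Recall that an infinitesimal motion of $\R^d$ is an affine map $x\mapsto Ax+b$ whose linear part $A$ is skew-symmetric (it is the velocity field at $t=0$ of a one-parameter family of isometries), so such maps form a vector space; in particular $\mathbf{\Phi}:=\mathbf{\Psi}-\mathbf{\Psi}'$ is again an infinitesimal motion of $\R^d$, and it vanishes on the $d$ affinely independent points $\p(v)$.

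So the crux is the following claim: an infinitesimal motion $\mathbf{\Phi}(x)=Ax+b$ of $\R^d$, with $A$ skew-symmetric, that vanishes on $d$ affinely independent points $q_1,\dots,q_d$ must be identically zero. From $Aq_i+b=\0$ for all $i$ we get $A(q_i-q_1)=\0$ for $i=2,\dots,d$, and since $q_1,\dots,q_d$ are affinely independent the vectors $q_2-q_1,\dots,q_d-q_1$ span a $(d-1)$-dimensional subspace $W$ on which $A$ vanishes. By skew-symmetry, $\langle Ax,w\rangle=-\langle x,Aw\rangle=0$ for all $x\in\R^d$ and $w\in W$, so the image of $A$ is contained in $W^{\perp}$, a one-dimensional space; but a skew-symmetric matrix has even rank, so $A=0$ and then $b=\0$. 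This is exactly the point at which infinitesimal rigidity is better behaved than finite rigidity: the parity of the rank of a skew-symmetric matrix rules out the reflection-type ambiguity, so $d$ affinely independent common vertices suffice and no $(d+1)$-st vertex is needed.

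Granting the claim, $\mathbf{\Psi}=\mathbf{\Psi}'$, hence $\m=\mathbf{\Psi}\circ\p$ holds on $V(G)$ and on $V(G')$, i.e., on all of $V(G)\cup V(G')=V(G\cup G')$. Thus every infinitesimal motion of $(G\cup G',\p)$ is induced by an infinitesimal motion of $\R^d$, which is precisely the statement that $(G\cup G',\p)$ is infinitesimally rigid. The only step carrying any content is the parity argument in the claim; the rest is a routine unwinding of the definitions of infinitesimal motion and infinitesimal rigidity, together with the observation that the space of infinitesimal motions of $\R^d$ (equivalently, of any framework) is closed under differences.
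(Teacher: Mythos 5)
Your argument is correct. Note, though, that the paper does not prove the Gluing Lemma at all: it is quoted as a standard result with references to Asimow--Roth and Lee's notes, so there is no in-paper proof to compare against; what you have written is essentially the classical motions-based proof from that literature. Your two steps are sound: restricting an infinitesimal motion $\m$ of $(G\cup G',\p)$ to each piece and invoking rigidity gives ambient motions $\mathbf{\Psi},\mathbf{\Psi}'$, and the difference is an infinitesimal motion of $\R^d$ vanishing on $d$ affinely independent points, hence zero. The key claim is right, and your parity argument (a skew-symmetric matrix has even rank, so rank at most one forces $A=0$) is valid; you could even avoid the parity fact by observing that for a unit normal $n$ to $W$ one has $An\in W^{\perp}=\mathrm{span}(n)$ while $\langle An,n\rangle=0$ by skew-symmetry, so $An=\0$ directly. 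One small point of friction with the paper: its preliminaries assert that infinitesimal motions of $\R^d$ have the form $x\mapsto Ax+b$ with $A$ \emph{orthogonal}, whereas your proof uses (and needs) the correct characterization that $A$ is skew-symmetric --- this is what makes the set of ambient infinitesimal motions a vector space closed under differences and what powers the rank argument; differentiating the paper's defining condition indeed yields $\langle x-y,A(x-y)\rangle=0$ for all $x,y$, i.e.\ skew-symmetry, so your version is the right one and the paper's ``orthogonal'' is a slip. An alternative route, common in the rigidity literature and closer in spirit to how the paper uses Lemma \ref{basic-rig-prop}, is to argue via ranks of rigidity matrices or dimensions of stress spaces; your direct unwinding of the definitions is more elementary and fully self-contained.
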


Let { $G=(V(G),E(G))$} be a graph and $u\notin V$ a new vertex. The \textit{cone} over $G$ is the graph { $u*G:=(V(G)\cup\{u\}, E(G)\cup\{\{u,v\} \ : \  v\in V\})$.} The following is a special case of \cite[Cor.~1.5]{Tay-et-al}.

\begin{lemma} {\rm{(The Cone Lemma)}}  \label{cone-lemma}
 Let $(u*G, \p)$ be a framework in $\R^d$, and let $\pi$ be either a central projection from $\p(u)$ onto a hyperplane $H$ not containing $\p(u)$ or an orthogonal projection onto a hyperplane $H$ perpendicular to $\p(u)\neq 0$. Assume further that $\pi$ is injective on $(u*G,\p)$. Then $(u*G, \p)$ is infinitesimally rigid in $\R^d$ if and only if $(G, \pi\circ \p)$ is infinitesimally rigid in $H\cong \R^{d-1}$.\end{lemma}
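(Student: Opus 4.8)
The plan is to establish the lemma via the stress-space criterion of Lemma~\ref{basic-rig-prop}. The heart of the matter will be a natural linear isomorphism $\Phi : \Stress(u*G,\p) \to \Stress(G,\pi\circ\p)$; equivalently, $\Phi$ encodes the rank identity $\rank\Rig(u*G,\p) = \rank\Rig(G,\pi\circ\p) + f_0(G)$, which always holds. Granting such a $\Phi$, the lemma is immediate: since $f_0(u*G) = f_0(G)+1$ and $f_1(u*G) = f_1(G)+f_0(G)$, the identity $\binom{d+1}{2} - d = \binom{d}{2}$ makes the ``target'' stress dimension $f_1(u*G) - df_0(u*G) + \binom{d+1}{2}$ of an infinitesimally rigid framework on $V(u*G)$ in $\R^d$ equal to the target dimension $f_1(G) - (d-1)f_0(G) + \binom{d}{2}$ of an infinitesimally rigid framework on $V(G)$ in $\R^{d-1}$; so $\dim\Stress(u*G,\p)$ attains the former bound if and only if $\dim\Stress(G,\pi\circ\p)$ attains the latter.

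Before constructing $\Phi$ I would normalize. Infinitesimal rigidity is preserved by invertible affine maps of the ambient space, so in the central-projection case I translate the configuration so that $\p(u)=\0$; the screen then becomes a hyperplane $H=\{x:\ell(x)=1\}$ for a linear functional $\ell$, and $\pi(\p(v)) = \p(v)/c_v$ with $c_v := \ell(\p(v))$, where $c_v\neq 0$ for all $v\in V(G)$ precisely because $\pi$ is defined at each $\p(v)$. This normalization works for every admissible screen at once, so one need not appeal to projective invariance of rigidity (and it follows, as a byproduct, that the rigidity status of $(G,\pi\circ\p)$ does not depend on the choice of $H$). The orthogonal-projection case is handled the same way, now with $\ell(x) = \langle x,\p(u)\rangle$, so that $H$ is a level set of $\ell$ and $\ell(\p(u)) = \|\p(u)\|^2 \neq 0$; here the one wrinkle is that a cone edge $\{u,v\}$ parallel to $H$ must have its weight recovered from the coordinates along $H$ rather than along $\ell$. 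I assume throughout --- as is automatic in the applications, e.g.\ when $(u*G,\p)$ is the edge skeleton of a simplicial $d$-polytope --- that the frameworks affinely span their ambient spaces; in general one checks that $(G,\pi\circ\p)$ spans $\R^{d-1}$ exactly when $(u*G,\p)$ spans $\R^d$, so that Lemma~\ref{basic-rig-prop} applies to both or neither.

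To build $\Phi$, let $\omega$ be a stress on $(u*G,\p)$. Its equilibrium at a vertex $v\in V(G)$ is the $\R^d$-equation $\sum_{w}\omega_{\{v,w\}}(\p(v)-\p(w)) + \omega_{\{u,v\}}(\p(v)-\p(u)) = \0$. Taking the $\ell$-component solves for the cone weight $\omega_{\{u,v\}}$ in terms of the weights $\omega_{\{v,w\}}$ ($w\in V(G)$) and the scalars $c_w$; substituting that expression back into the remaining $d-1$ components and rescaling shows that $\bar\omega_{\{v,w\}} := c_v c_w\,\omega_{\{v,w\}}$ satisfies the equilibrium condition of $(G,\pi\circ\p)$ at $v$. (Equilibrium of $\omega$ at the apex $u$ is not a further constraint --- it is forced by summing the equilibrium equations over $v\in V(G)$.) Setting $\Phi(\omega) = \bar\omega$, one gets a linear map, and it is inverted by sending a stress $\bar\omega$ of $(G,\pi\circ\p)$ to the weighting $\omega_{\{v,w\}} := \bar\omega_{\{v,w\}}/(c_v c_w)$ on $E(G)$ together with the cone weights prescribed by the formula above; reversing the computation confirms this is a stress on $(u*G,\p)$. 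Hence $\Phi$ is an isomorphism and the lemma follows.

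I expect the one genuinely clever point to be spotting the weighting $\bar\omega_{\{v,w\}} = c_v c_w\,\omega_{\{v,w\}}$ that converts the $\ell$-reduced equilibrium of the cone into the equilibrium of the projected framework; after that, checking that $\Phi$ and its inverse are well defined is routine, if a touch fiddly, linear algebra. The only remaining obstacle is bookkeeping --- confirming that the equivalence survives the orthogonal-projection variant (edges parallel to $H$) and the degenerate, non-spanning configurations --- and this is an annoyance rather than a real difficulty.
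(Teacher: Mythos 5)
Your central-projection argument is essentially the standard coning/stress-transfer proof, and it is correct: after translating the apex to the origin, the rescaling $\bar\omega_{\{v,w\}}=c_vc_w\,\omega_{\{v,w\}}$ with $c_v=\ell(\p(v))\neq 0$ does give a linear isomorphism $\Stress(u*G,\p)\cong\Stress(G,\pi\circ\p)$, the apex equilibrium is indeed automatic, and the count $\binom{d+1}{2}-d=\binom{d}{2}$ finishes that case (for comparison, the paper does not reprove this lemma at all; it quotes it as a special case of Tay--White--Whiteley, Cor.~1.5).

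The genuine gap is the orthogonal-projection half, which you dismiss as ``handled the same way'' and which is precisely the version invoked in Lemma~\ref{lemma: not-d-antipodal-pairs}. Your construction rests on the fibres of $\pi$ being lines through the apex, so that $\pi(\p(v))$ is the rescaling $\p(v)/c_v$; for the orthogonal projection onto $H\perp\p(u)$ the fibres are lines in the direction $\p(u)$, which do not pass through the apex $\p(u)$, no analogue of the $c_vc_w$ rescaling exists, and the asserted rank identity and stress isomorphism are false in that setting. Concretely, take $d=3$, $\p(u)=(0,0,1)$, $G$ on $v_1,\dots,v_4$ with edges $v_1v_2,v_1v_3,v_2v_3,v_1v_4,v_2v_4$, and $\p(v_1)=(1,0,1)$, $\p(v_2)=(0,1,1)$, $\p(v_3)=(1,1,1)$, $\p(v_4)=(2,3,0)$. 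The orthogonal shadow on $\{z=0\}$ is injective on the vertices and is infinitesimally rigid in $\R^2$ with $\Stress(G,\pi\circ\p)=0$, while the cone contains the flat $K_4$ on $\p(u),\p(v_1),\p(v_2),\p(v_3)$ (all in the plane $z=1$): it carries a nonzero stress, and the motion that moves $v_3$ in direction $(0,0,1)$ and fixes the other vertices is a nontrivial infinitesimal flex, so $(u*G,\p)$ is not infinitesimally rigid. Thus no isomorphism $\Phi$ exists here, and even the stated equivalence fails if ``injective on $(u*G,\p)$'' is read as injectivity on the vertex set; a symmetric example (place the shadow of $v_4$ on the line through the shadows of $v_1,v_2$) breaks the other direction. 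What these examples violate is injectivity of $\pi$ on the geometric $1$-skeleton (shadow edges cross or overlap), so the orthogonal case hinges on that stronger reading of the hypothesis and on an argument genuinely different from your scaling trick. As written, your proposal proves only the central-projection case; the orthogonal case needs either its own proof under the correct injectivity hypothesis or, as in the paper, an appeal to the cited source.
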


\subsection{Infinitesimal rigidity of polytopes}

The relevance of  framework rigidity to the study of face numbers of simplicial polytopes (pioneered by Kalai in \cite{Kalai-rigidity}) is  evident from Lemma \ref{basic-rig-prop} and the following fundamental result due to Whiteley \cite{Whiteley-84}. For a simplicial complex $\Delta$, we use the notation $(\Delta,\p)$ to say that $\p: V(\Delta) \rightarrow \R^d$ is a framework on the underlying graph of $\Delta$, {i.e., the $1$-dimensional skeleton of $\Delta$}; further, for a simplicial polytope $P$, we write $(P,\p)$ instead of $(\partial P,\p)$. We also denote by $G(Q)$ the graph of $Q$ whenever $Q$ is a simplicial complex or a polytope.

\begin{theorem}[{\rm Whiteley, 1984}]  \label{Whiteley-thm}
Let $P\subset \R^d$ be a simplicial $d$-polytope $P$, where $d\geq 3$. The graph of $P$ with its natural embedding is infinitesimally rigid in $\R^d$.
\end{theorem}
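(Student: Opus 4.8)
The plan is to induct on the dimension $d$: one reduces a simplicial $d$-polytope to the vertex figures at its vertices (which are simplicial $(d-1)$-polytopes) by means of the Cone Lemma, and then reassembles the boundary from the stars of the vertices by means of the Gluing Lemma. The induction must bottom out at $d=3$ rather than $d=2$, and the case $d=3$ requires a genuinely different argument: in a simplicial $3$-polytope the link of a vertex is a cycle, and a cycle on four or more vertices is not infinitesimally rigid in $\R^2$, so in dimension $3$ neither the Cone-Lemma reduction nor even the rigidity of a single vertex star survives.

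\textbf{Inductive step ($d\geq 4$).} Let $P\subset\R^d$ be a simplicial $d$-polytope with its natural embedding $\p$, and fix a vertex $v$. Choose a hyperplane $H$ strictly separating $\p(v)$ from the remaining vertices, and let $\pi$ be the central projection from $\p(v)$ onto $H$; then $P_v:=P\cap H$ is a simplicial $(d-1)$-polytope whose boundary complex is combinatorially isomorphic to $\lk_P(v)$ via $w\mapsto\pi(\p(w))$ for $w$ a neighbor of $v$, and $P_v$ affinely spans $H$. Since no vertex of a convex polytope lies on the line through two others, $\pi$ is injective on the vertices of $\lk_P(v)$. Applying the inductive hypothesis to $P_v$ (legitimate as $d-1\geq 3$), the framework $(\lk_P(v),\pi\circ\p)$ is infinitesimally rigid in $H\cong\R^{d-1}$; the Cone Lemma then gives that $(\st_P(v),\p)=(v*\lk_P(v),\p)$ is infinitesimally rigid in $\R^d$. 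Note also that the vertices of $\st_P(v)$ affinely span $\R^d$, because the edges of $P$ at $v$ span $\R^d$.

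It remains to glue the stars together. The graph $G(P)$ is connected, so we may list the vertices $v_1,\dots,v_n$ of $P$ so that each $v_{k+1}$ is adjacent to some $v_j$ with $j\leq k$. Put $G_k:=\bigcup_{i\leq k}G(\st_P(v_i))$, so $G_n=G(P)$, and argue that each $G_k$ is infinitesimally rigid by induction on $k$: the case $k=1$ is the previous paragraph, and for the step $G_{k+1}=G_k\cup G(\st_P(v_{k+1}))$ with both pieces infinitesimally rigid and full-dimensional. Since $v_{k+1}$ is adjacent to $v_j$ for some $j\leq k$, the common vertex set of $G_k$ and $G(\st_P(v_{k+1}))$ contains that of $G(\st_P(v_j))$ and $G(\st_P(v_{k+1}))$, which in turn contains $v_j$, $v_{k+1}$, and the vertices of a facet of $\lk_P(\{v_j,v_{k+1}\})$; the union of these is the vertex set of a $(d-1)$-face of $P$, hence a set of $d$ affinely independent points. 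The Gluing Lemma now yields that $G_{k+1}$ is infinitesimally rigid, which completes this induction and, with it, the inductive step in $d$.

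\textbf{Base case ($d=3$).} This is the classical theorem --- the infinitesimal analogue of Cauchy's rigidity theorem, due to Dehn and reproved by many authors --- that the boundary of a convex simplicial $3$-polytope is infinitesimally rigid in $\R^3$. I would invoke it, but it can also be proved by induction on the number of vertices: a simplicial $3$-polytope has a vertex $v$ of degree $3$, $4$, or $5$; if $\deg v=3$ then deleting $v$ writes $P=P'\#_\sigma\Delta^3$ and one concludes exactly as above with the Gluing Lemma, using that the shared triangle $\sigma$ gives three affinely independent points and that $\st_P(v)=K_4$ is infinitesimally rigid; whereas the cases $\deg v\in\{4,5\}$ require excising $v$ together with its star, triangulating the resulting polygonal patch, and transferring infinitesimal rigidity across this local modification, and here convexity must be used in an essential way --- the naive ``$1$-extension'' argument fails because the new vertex is not collinear with the endpoints of the introduced diagonal. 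This base case is the crux; once it is in hand, everything for $d\geq 4$ follows formally via the Cone and Gluing Lemmas.
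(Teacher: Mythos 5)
Your argument is correct and is essentially the approach the paper ascribes to Whiteley and partially sketches in Lemma~\ref{lemma: star-of-face-rigid}: induct on $d$, prove vertex stars infinitesimally rigid via the vertex figure and the Cone Lemma, assemble the stars along facets using the Gluing Lemma, and take Dehn's theorem as the $d=3$ base case. The paper itself cites this theorem rather than reproving it, and like the paper you ultimately invoke the classical $d=3$ result, so there is no substantive difference to report.
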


The case $d=3$ of this theorem is due to Dehn.
Whiteley's proof for $d\geq 4$ is by induction on $d$ with the following lemma serving as the main part of the inductive step. As we frequently rely on this lemma, we sketch its proof for completeness.

\begin{lemma}\label{lemma: star-of-face-rigid}
Let $d \geq 4$, let $P$ be a simplicial $d$-polytope with its natural embedding $\p$ in $\R^d$.  Then for every face $\tau$ of $P$ with $1\leq |V(\tau)|\leq d-3$, the framework $(\st_P(\tau),\p)$ is infinitesimally rigid.
\end{lemma}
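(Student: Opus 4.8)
The plan is to prove the lemma by induction on the size of $\tau$, using the Cone Lemma to reduce the rigidity of $\st_P(\tau)$ to a statement about the link of $\tau$ inside a lower-dimensional polytope, and then applying Whiteley's theorem (Theorem \ref{Whiteley-thm}) in dimension $d-1$ — or the inductive hypothesis — to the vertex figure. Concretely, first I would handle the base case $|V(\tau)|=1$, say $\tau=\{u\}$. The star $\st_P(u)$ is the cone $u*\lk_P(u)$, and $\lk_P(u)$ is the boundary complex of the vertex figure $P_u$ of $u$, which is a simplicial $(d-1)$-polytope. The natural embedding $\p$ of $P$, when restricted to $\st_P(u)$ and composed with the central projection $\pi$ from $\p(u)$ onto a hyperplane $H$ strictly separating $u$ from the remaining vertices, sends $(\lk_P(u),\p)$ to the framework $(P_u,\pi\circ\p)$ given by the natural embedding of the vertex figure (up to the identification $H\cong\R^{d-1}$). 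Since $d-1\geq 3$, Theorem \ref{Whiteley-thm} gives that $(P_u,\pi\circ\p)$ is infinitesimally rigid in $\R^{d-1}$, so the Cone Lemma (Lemma \ref{cone-lemma}) yields that $(u*\lk_P(u),\p)=(\st_P(u),\p)$ is infinitesimally rigid in $\R^d$. One must check that $\pi$ is injective on $(u*\lk_P(u),\p)$, which is immediate because $H$ separates $\p(u)$ from the other vertices and the cone rays from $\p(u)$ hit $H$ in distinct points.

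For the inductive step, suppose $2\leq|V(\tau)|\leq d-3$ and write $\tau=\{u\}\cup\tau'$ with $|V(\tau')|=|V(\tau)|-1$. Again $\st_P(\tau)=u*\lk_P(u)$ restricted appropriately; more precisely, $\st_P(\tau)$ is the cone with apex $u$ over $\st_{\lk_P(u)}(\tau')$, where $\tau'$ is a face of $\lk_P(u)=\partial P_u$ with $1\leq|V(\tau')|\leq d-4=(d-1)-3$. Projecting from $\p(u)$ as above, the framework $(\st_{\lk_P(u)}(\tau'),\pi\circ\p)$ is precisely $(\st_{P_u}(\tau'),\pi\circ\p)$ — the star of a face of the simplicial $(d-1)$-polytope $P_u$ with vertex count in the permissible range — so the inductive hypothesis applied in dimension $d-1$ gives that it is infinitesimally rigid in $\R^{d-1}$. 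The Cone Lemma then promotes this to infinitesimal rigidity of $(\st_P(\tau),\p)$ in $\R^d$, completing the induction. The hypothesis $|V(\tau)|\leq d-3$ is exactly what guarantees $|V(\tau')|\leq (d-1)-3$ persists when we pass to the vertex figure, and the base case $d-1\geq 3$ of Whiteley's theorem requires $d\geq 4$.

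The main obstacle, and the point requiring the most care, is verifying that the projected framework really coincides with the natural embedding of the vertex figure (or its star), i.e., that $\pi\circ\p$ restricted to $V(\lk_P(u))$ is an affinely-correct realization of $P_u$ as a convex polytope — this is where one uses that the vertex figure is obtained by slicing $P$ with a hyperplane near $u$, and that central projection from $\p(u)$ onto such a slicing hyperplane carries the edges of $\lk_P(u)$ to the edges of $P_u$ bijectively. A secondary technical point is checking the injectivity hypothesis of the Cone Lemma and that the projected framework does not lie in a hyperplane of $H$, so that Lemma \ref{basic-rig-prop} and Theorem \ref{Whiteley-thm} apply; both follow from the full-dimensionality of $P$ and of the vertex figure. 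Once these identifications are in place, the argument is a clean two-line induction driven entirely by the Cone Lemma.
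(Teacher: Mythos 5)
Your proposal is correct and follows essentially the same route as the paper's proof: project centrally from one vertex of $\tau$ onto a hyperplane realizing the vertex figure, identify the projected star with the star of the remaining $|V(\tau)|-1$ vertices in that simplicial $(d-1)$-polytope, apply Whiteley's theorem (base case) or the inductive hypothesis there, and lift back with the Cone Lemma. The additional verifications you flag (injectivity of the projection and the identification of the projected framework with the natural embedding of the vertex figure) are exactly the details the paper's sketch leaves implicit.
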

\begin{proof}
	Let $V(\tau)=\{v_1,\dots,v_k\}$, let $H$ be a hyperplane so that $Q=P\cap H$ is a vertex figure of $v_1$, and let $\q$ be the natural embedding of $Q$ in $H$. Then $(Q, \q)$ is infinitesimally rigid in $H$ because $Q$ is a simplicial $(d-1)$-polytope and $d-1\geq 3$. Since the framework $(Q, \q)$ is the image of $(\lk_P(v_1),\p)$ under the central projection from $v_1$ onto $H$, and since $\st_{P}(v_1)=v_1*\lk_{P}(v_1)$, the $|V(\tau)|=1$ case of the statement follows from the Cone Lemma.
		
For $|V(\tau)|=k>1$, we induct on $k$. Let $\tau'$ be the face of $Q$ formed by the vertices corresponding to $v_2,\ldots,v_k$. Since $\tau'$ has only $k-1\leq d-4$ vertices and since $Q$ is a $(d-1)$-polytope, our inductive hypothesis implies that $(\st_Q (\tau'), \q)$ is infinitesimally rigid in $H$. The fact that $(\st_P(\tau), \p)=(v_1*\st_Q (\tau'),\p)$ together with the Cone Lemma completes the proof. \end{proof}

Combining Theorem \ref{Whiteley-thm} with Lemma \ref{basic-rig-prop} and the equality part of the Lower Bound Theorem gives the following rigidity-theoretic interpretation of the equality part, which is the overarching theme in Kalai's paper \cite{Kalai-rigidity}.

\begin{proposition}
Let $P$ be a simplicial $d$-polytope with $d \geq 4$. The following conditions are equivalent:
\begin{enumerate}
\item $g_2(P) = 0$;
\item $P$ is stacked;
\item the graph of $P$ with its natural embedding in $\R^d$ does not admit any nontrivial stresses.
\end{enumerate}
\end{proposition}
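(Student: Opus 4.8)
The plan is to deduce all three equivalences from results already in hand, using the stress space $\Stress(G(P),\p)$ as the common hub. First I would note that since $P$ is a $d$-polytope, its vertices affinely span $\R^d$, so the framework $(G(P),\p)$ does not lie in a hyperplane of $\R^d$ and Lemma~\ref{basic-rig-prop} applies to it; moreover $d\geq 4\geq 3$, so Theorem~\ref{Whiteley-thm} tells us $(G(P),\p)$ is infinitesimally $d$-rigid. Feeding this into part (3) of Lemma~\ref{basic-rig-prop} gives
\[
\dim\Stress(G(P),\p)=f_1(P)-df_0(P)+\binom{d+1}{2}=g_2(P).
\]
Hence $g_2(P)=0$ if and only if $(G(P),\p)$ carries no nonzero stress, which is precisely the equivalence (1)$\iff$(3).

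For (1)$\iff$(2) I would simply invoke the equality case of the Lower Bound Theorem recalled in the introduction: a simplicial $d$-polytope with $d\geq 4$ has $g_2=0$ if and only if it is stacked. If a partly self-contained argument is preferred, the implication (2)$\Rightarrow$(1) is an elementary count: the boundary of the $d$-simplex has $g_2=\binom{d+1}{2}-d(d+1)+\binom{d+1}{2}=0$, and forming a connected sum $Q\#_\tau R$ with a new $d$-simplex $R$ along a facet $\tau$ adds exactly one vertex and $d$ edges (namely the edges of $R$ meeting the new vertex), leaving $g_2=f_1-df_0+\binom{d+1}{2}$ unchanged; an easy induction on the number of summands then gives $g_2=0$ for every stacked polytope. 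The reverse implication (1)$\Rightarrow$(2) is the substantive content and is quoted from Billera--Lee. Combining this with the previous paragraph closes the cycle (1)$\Leftrightarrow$(2)$\Leftrightarrow$(3).

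The only genuinely hard input is the implication ``$g_2(P)=0\Rightarrow P$ is stacked,'' which is the difficult half of the Lower Bound Theorem and is used here as a black box; the rigidity-theoretic equivalence $g_2(P)=0\iff\Stress(G(P),\p)=0$ is immediate once one observes that, for a full-dimensional infinitesimally rigid framework, $g_2$ is \emph{literally} the stress-space dimension predicted by Lemma~\ref{basic-rig-prop}. Beyond checking that the hypotheses of Lemma~\ref{basic-rig-prop} (full-dimensionality) and of Theorem~\ref{Whiteley-thm} ($d\geq 3$, which follows from $d\geq 4$) are in force, I do not expect any obstacle.
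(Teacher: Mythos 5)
Your argument is correct and is exactly the route the paper takes: the paper gives no separate proof but simply states that the proposition follows by combining Theorem~\ref{Whiteley-thm} with Lemma~\ref{basic-rig-prop} (which yields $g_2(P)=\dim\Stress(G(P),\p)$, i.e.\ (1)$\iff$(3)) and the equality part of the Lower Bound Theorem (Billera--Lee) for (1)$\iff$(2). Your extra checks of the hypotheses and the elementary count for (2)$\Rightarrow$(1) are fine but add nothing beyond what the paper implicitly relies on.
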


The following result was established by Kalai \cite{Kalai-rigidity} in the context of generic rigidity theory, but the proofs hold for a specific infinitesimally rigid embedding of a graph as well.

\begin{lemma} \label{lemma: missing-face-stress}
Let $P$ be a simplicial $d$-polytope with its natural embedding $\p$ in $\R^d$, and assume that $d \geq 4$.
\begin{enumerate}
\item If the graph of $P$ contains a chordless cycle $C=(u,v,w,z)$, and $e$ is an edge of $C$, then there is a stress on $(P,\p)$ that is non-zero on $e$.
\item Let $\tau$ be a missing face of $\partial P$ with $3 \leq |\tau| \leq d-1$, $e$ an edge in $\tau$, and  $\tau' := \tau \setminus e$. Then there is a stress on $(\st(\tau')\cup\{e\}, \p)$ that is non-zero on $e$.
\end{enumerate}
\end{lemma}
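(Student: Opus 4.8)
The plan is to reduce both statements to one simple mechanism for manufacturing stresses. If $(K,\p)$ is the restriction of $(P,\p)$ to a subgraph $K\subseteq G(P)$ and $\omega\in\Stress(K,\p)$, then extending $\omega$ by zero on $E(G(P))\setminus E(K)$ produces an element of $\Stress(P,\p)$, because the equilibrium condition at each vertex involves only the incident edges that lie in $K$. Hence in each part it suffices to exhibit a subframework through $e$ carrying a stress that is nonzero on $e$. To do that I would invoke the rank criterion of Lemma~\ref{basic-rig-prop} in the following form: if $(K',\p)$ is infinitesimally rigid and does not lie in a hyperplane of $\R^d$, and $e=\{x,y\}$ is a pair of vertices of $K'$ with $e\notin E(K')$, then $(K'\cup\{e\},\p)$ is again infinitesimally rigid with the same vertex set, so $\rank\Rig(K'\cup\{e\},\p)=\rank\Rig(K',\p)$; therefore the row of $\Rig(K'\cup\{e\},\p)$ indexed by $e$ depends linearly on the remaining rows, which is precisely a stress on $(K'\cup\{e\},\p)$ with nonzero value on $e$. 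The whole proof then amounts to choosing, in each part, the right rigid subframework $K'$.

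For part (2) I would take $\tau'=\tau\setminus e$ and $K'=G(\st_P(\tau'))$. Since $3\le|\tau|\le d-1$, we have $1\le|\tau'|=|\tau|-2\le d-3$, so Lemma~\ref{lemma: star-of-face-rigid} gives that $(\st_P(\tau'),\p)$ is infinitesimally rigid; it is not contained in a hyperplane because the nonempty face $\tau'$ lies in (at least two) facets of $P$, and the vertex set of a single facet already affinely spans a hyperplane while a second facet contributes a vertex off it. Writing $e=\{a,b\}$, both $a$ and $b$ are vertices of $\lk_P(\tau')$ because $\tau'\cup\{a\}$ and $\tau'\cup\{b\}$ are proper subsets of the missing face $\tau$, hence faces; on the other hand $e$ is not an edge of $\st_P(\tau')$ since $\tau'\cup e=\tau$ is not a face. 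The mechanism of the first paragraph now applies verbatim and yields the stress on $(\st_P(\tau')\cup\{e\},\p)$ that is required.

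For part (1), write the chordless $4$-cycle as $C=(u,v,w,z)$; by the symmetry of $C$ we may assume $e=\{u,v\}$, and I would take $K'=G(\st_P(w))\cup G(\st_P(z))$. Both $(\st_P(w),\p)$ and $(\st_P(z),\p)$ are infinitesimally rigid by Lemma~\ref{lemma: star-of-face-rigid} (these are stars of single vertices and $d\ge4$), and since $\{w,z\}\in E(C)$ their intersection contains $G(\st_P(\{w,z\}))$, whose vertex set affinely spans $\R^d$ (it contains the vertices of two facets of $P$ through the edge $wz$, lying on distinct supporting hyperplanes); so the Gluing Lemma (Lemma~\ref{gluing-lemma}) shows $(K',\p)$ is infinitesimally rigid and full-dimensional. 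Chordlessness gives $\{u,w\},\{v,z\}\notin E(G(P))$, hence $\{u,v,w\}$ and $\{u,v,z\}$ are not faces, so $e\notin E(\st_P(w))\cup E(\st_P(z))=E(K')$; and $u\in V(\st_P(z))$, $v\in V(\st_P(w))$ because $\{u,z\},\{v,w\}\in E(C)$. Applying the mechanism to this $K'$ and extending the resulting stress by zero gives a stress on $(P,\p)$ nonzero on $e$. The only step that needs a little care — and the place I would isolate as a small lemma rather than belabor inline — is the claim that the star of a face of $\partial P$ is never contained in a hyperplane, which follows from the standard fact that such a face lies in two facets with distinct supporting hyperplanes. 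Beyond that I do not anticipate a genuine obstacle: the argument is just Kalai's generic-rigidity reasoning specialized to the fixed embedding $\p$, which is legitimate precisely because Theorem~\ref{Whiteley-thm} and Lemma~\ref{lemma: star-of-face-rigid} supply infinitesimal rigidity of $(P,\p)$ and of stars of small faces for this particular $\p$.
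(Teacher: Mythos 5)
Your proposal is correct and follows essentially the same route as the paper: in both parts you pick exactly the subframeworks the paper uses ($\st(\tau')$ for part 2, and $\st(w)\cup\st(z)$ glued along a facet containing the opposite edge for part 1), verify infinitesimal rigidity via Lemma \ref{lemma: star-of-face-rigid} and the Gluing Lemma, and then extract the stress from the equality of ranks of $\Rig(K',\p)$ and $\Rig(K'\cup\{e\},\p)$ supplied by Lemma \ref{basic-rig-prop}, extending by zero where needed. The only difference is that you spell out the affine-span and ``$e$ is a missing edge'' checks slightly more explicitly than the paper does, which is harmless.
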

\begin{proof} (Sketch) For {part} 1, let $e = \{u,v\}$ and $e'=\{w,z\}$.  Then $(\st(w)\cup \st(z), \p)$ is infinitesimally rigid. (Indeed, the stars $(\st(w), \p)$ and $(\st(z), \p)$ are infinitesimally rigid by Lemma \ref{lemma: star-of-face-rigid} and share $d$ affinely independent vertices, namely, the vertices of any facet of $P$ that contains $e'$.) Furthermore,  since $C$ is a chordless cycle in the graph of $P$,  $e$ is a missing edge of  $\st(w)\cup \st(z)$. It then follows from Lemma \ref{basic-rig-prop} that the matrices $\Rig(\st(w)\cup \st(z), \p)$ and $\Rig(\st(w)\cup \st(z) \cup\{e\}, \p)$ have the same rank. Hence the $e$-row of the latter matrix is a linear combination of the other rows. The statement follows.

For part 2, note that $1\leq |\tau'|\leq d-3$, and so $(\st(\tau'), \p)$ is infinitesimally rigid by Lemma~\ref{lemma: star-of-face-rigid}. Since $e$ is a missing edge of this star, the same argument as above completes the proof.
\end{proof}

The statement of {part} 1 in Lemma \ref{lemma: missing-face-stress} can be extended to chordless cycles of length $k \geq 4$.  We only include the proof for $k=4$ here since the proof is shorter and that is the only case we require for this paper.

%%%%%%%%%%%%%%%%%%%%%%%
%%%%%%%%%%%%%%%%%%%%%%%
\section{Rigidity theory for centrally symmetric graphs} \label{section:3}
%%%%%%%%%%%%%%%%%%%%%%%
%%%%%%%%%%%%%%%%%%%%%%%

In this section we will couple rigidity theory with central symmetry to establish lower bounds for rigid frameworks that respect central symmetry.

Recall that if $\Delta$ is a $(d-1)$-dimensional simplicial complex, then $g_2(\Delta)=f_1(\Delta) - df_0(\Delta) + {d+1 \choose 2}$. Similarly, if $(G,\p)$ is any $d$-framework that affinely spans $\R^d$, { we define} $$g_2(G,\p):= f_1(G) - df_0(G) + {d+1 \choose 2}.$$  When $\p$ is clear, we will simply write $g_2(G)$ in place of $g_2(G,\p)$; we will only employ the notation $g_2(G,\p)$ when the dimension of the ambient space in which the graph is embedded is unclear.

We say that $(G,\p)$ is a {\em cs $d$-framework}  if the graph $G$ is cs and the embedding $\p: V(G) \rightarrow \R^d$ respects the symmetry; i.e., $\p(-v) = -\p(v)$ for all $v \in V(G)$.
If $(G,\p)$ is a cs framework, define
$$\Stress^{\text{sym}}(G,\p):= \{\omega\in\Stress(G,\p) \, : \, \omega_e=\omega_{-e} \quad \mbox{for all edges $e$ of $G$}\}.$$

Our key tool will be the following rigidity-theoretic result for cs frameworks.  The result and proof are practically
identical to that of Sanyal et al.~{\cite[Thm.~2.1]{Sanyal-et-al}} (there they work only with cs polytopes, but here we state the result for general rigid cs frameworks), so we only give a summary that highlights the part of the proof that will be relevant for our later results.

\begin{lemma} \label{lemma: g2-lower-bound}
Let $d\geq 3$, and let $(G,\p)$ be an infinitesimally rigid cs $d$-framework that affinely spans $\R^d$. Then $g_2(G) \geq {d \choose 2}-d$.
\end{lemma}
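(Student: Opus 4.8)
The plan is to run the equivariant linear-algebra argument of \cite{Sanyal-et-al} in the generality of rigid cs frameworks. Central symmetry supplies two involutions: the \emph{free} involution $v\mapsto -v$ on $V(G)$, and the induced involution $e=\{u,v\}\mapsto -e=\{-u,-v\}$ on $E(G)$, which is \emph{not} free --- its fixed points are the ``diameter'' edges of the form $\{v,-v\}$. The first step is to lift these to orthogonal involutions on the ambient spaces of the rigidity matrix: let $T$ act on the motion space $\R^{df_0(G)}=\bigoplus_v\R^d$ by $(T\m)(v):=-\m(-v)$, and let $S$ act on the stress space $\R^{E(G)}$ by $(S\omega)_e:=\omega_{-e}$. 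A short computation using $\p(-v)=-\p(v)$ (together with the fact that each row of the rigidity matrix is symmetric under swapping the two endpoints of its edge) shows that $\Rig(G,\p)\,T=S\,\Rig(G,\p)$. Consequently $\Rig(G,\p)$ and its transpose preserve the eigenspace decompositions $\R^{df_0(G)}=W_+\oplus W_-$ and $\R^{E(G)}=U_+\oplus U_-$ of $T$ and $S$, so $\Rig(G,\p)$ splits as $R_+\oplus R_-$ with $R_\pm\colon W_\pm\to U_\pm$, and $\Stress(G,\p)=\Stress^{\text{sym}}(G,\p)\oplus\Stress^{\text{anti}}(G,\p)$ with $\dim\Stress^{\text{sym}}=\dim U_+-\rank R_+$ and $\dim\Stress^{\text{anti}}=\dim U_--\rank R_-$; here $\Stress^{\text{anti}}$ consists of the stresses satisfying $\omega_e=-\omega_{-e}$, which in particular vanish on every diameter edge.

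Next I would compute the four dimensions appearing above. Because the vertex involution is free, on each antipodal pair $\{v,-v\}$ the operator $T$ acts on the corresponding $\R^d\oplus\R^d$ by $(x,y)\mapsto(-y,-x)$, whose $+1$- and $-1$-eigenspaces are each $d$-dimensional; hence $\dim W_+=\dim W_-=df_0(G)/2$. Writing $a$ and $c$ for the numbers of diameter and of non-diameter edges (so $f_1(G)=a+c$ with $c$ even), $S$ fixes each diameter coordinate vector and swaps the two coordinate vectors of each non-diameter pair, so $\dim U_+=a+c/2$ and $\dim U_-=c/2$. For the kernels, Lemma~\ref{basic-rig-prop} together with infinitesimal rigidity identifies $\ker\Rig(G,\p)$ with the space of infinitesimal isometries $\Psi(x)=Ax+b$ of $\R^d$ ($A$ skew-symmetric, $b\in\R^d$) via the map $\Psi\mapsto\Psi\circ\p$, which is an isomorphism because $\p$ affinely --- hence linearly --- spans $\R^d$. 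Conjugating $\Psi$ by $x\mapsto -x$ sends $(A,b)\mapsto(A,-b)$, and this conjugation corresponds to $T$ under the isomorphism; therefore $\ker R_+$ is the space of infinitesimal rotations, of dimension ${d\choose 2}$, and $\ker R_-$ is the space of translations, of dimension $d$.

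Assembling the count, $\rank R_+=df_0(G)/2-{d\choose 2}$ and $\rank R_-=df_0(G)/2-d$, so $\dim\Stress^{\text{sym}}(G,\p)-\dim\Stress^{\text{anti}}(G,\p)=a+{d\choose 2}-d$, and hence
\[
g_2(G)=\dim\Stress^{\text{sym}}(G,\p)+\dim\Stress^{\text{anti}}(G,\p)=2\dim\Stress^{\text{anti}}(G,\p)+a+{d\choose 2}-d\;\geq\;{d\choose 2}-d,
\]
since $a\geq 0$ and $\dim\Stress^{\text{anti}}\geq 0$. (As a byproduct, equality forces both $a=0$, i.e.\ $G$ has no diameter edges, and $\Stress^{\text{anti}}(G,\p)=0$, i.e.\ \emph{every} stress on $(G,\p)$ is symmetric --- precisely the input the later sections need.) I expect the main difficulty to be bookkeeping rather than conceptual: one must keep track of the fact that the vertex involution is free while the edge involution is not, pin down the signs in the definition of $T$ so that the symmetric trivial motions really are the rotations and the antisymmetric ones really are the translations, and check the elementary fact that a framework affinely spanning $\R^d$ also spans it linearly, so that $\Psi\mapsto\Psi\circ\p$ is injective.
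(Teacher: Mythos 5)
Your argument is correct and is essentially the paper's approach: the paper's proof simply quotes the equivariant computation of Sanyal--Werner--Ziegler (their Eq.~(8), giving $\dim \Stress^{\text{sym}}(G,\p) \geq \frac{f_1(G)}{2} - \frac{d f_0(G)}{2} + {d \choose 2}$) and finishes with the same comparison of $\dim\Stress$ and $\dim\Stress^{\text{sym}}$, whereas you carry out that cited computation in full --- the intertwining $\Rig(G,\p)T=S\Rig(G,\p)$, the block decomposition, and the identification of rotations and translations with the two eigenspaces of the trivial motions. Your version is slightly more general (allowing diameter edges, which the paper's freeness assumption on cs complexes excludes) and correctly recovers the equality consequence $\Stress=\Stress^{\text{sym}}$ of Corollary \ref{cor: symmetric-stresses}.
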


\begin{proof}
The computations of  \cite[p. 188--189]{Sanyal-et-al} apply verbatim to give the following inequality, which is Eq.~(8) in \cite{Sanyal-et-al}:
\begin{equation} \label{eq: symm-stress-dim}
\dim \Stress^{\text{sym}}(G,\p) \geq \frac{f_1(G)}{2} - \frac{d f_0(G)}{2} + {d \choose 2}.
\end{equation}
Since $(G,\p)$ is infinitesimally $d$-rigid, $g_2(G) = \dim \Stress(G,\p)$, and hence
\begin{eqnarray*}
g_2(G) &=& f_1(G) - d f_0(G) + {d+1 \choose 2} \\
&=& {\mbox{\small $2\left[f_1(G)-d f_0(G) + {d+1 \choose 2} \right]- \left[f_1(G)-d f_0(G) + 2{d \choose 2}\right] + 2{d \choose 2}-{d+1 \choose 2}$}} \\
&\overset{(*)}{\geq}& 2\dim \Stress(G,\p) - 2\dim \Stress^{\text{sym}}(G,\p) + 2{d \choose 2}-{d+1 \choose 2} \\
&\overset{(**)}{\geq} & 2{d \choose 2}-{d+1 \choose 2} \\
&=& {d \choose 2}-d.
\end{eqnarray*}
\noindent Here, the inequality (*) comes from Eq.~\eqref{eq: symm-stress-dim} and the inequality (**) follows from the fact that $\Stress^{\text{sym}}(G,\p)$ is a subspace of $\Stress(G,\p)$.
\end{proof}

The computation at the end of the proof of Lemma \ref{lemma: g2-lower-bound} shows that if $g_2(G) = {d \choose 2} -d$  then $\Stress(G,\p) = \Stress^{\text{sym}}(G,\p)$.  This proves the following important corollary.

\begin{corollary}\label{cor: symmetric-stresses}
Let $(G,\p)$ be an infinitesimally rigid cs $d$-framework with $d \geq 3$ that affinely spans $\R^d$.  If $g_2(G) = {d \choose 2}-d$ then every stress on $(G,\p)$ is symmetric.
\end{corollary}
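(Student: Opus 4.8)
The plan is to read off the conclusion directly from the chain of inequalities displayed in the proof of Lemma~\ref{lemma: g2-lower-bound}. That chain runs from $g_2(G)$ down to $\binom{d}{2}-d$ and contains exactly two inequalities: $(*)$, which comes from the symmetric-stress estimate \eqref{eq: symm-stress-dim}, and $(**)$, which comes from the inclusion $\Stress^{\text{sym}}(G,\p)\subseteq\Stress(G,\p)$. Under the hypothesis $g_2(G)=\binom{d}{2}-d$ the two ends of the chain coincide, so every inequality in it — in particular $(**)$ — must be an equality.

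First I would spell out what equality in $(**)$ says. Since $(G,\p)$ is infinitesimally rigid and affinely spans $\R^d$, Lemma~\ref{basic-rig-prop} gives $g_2(G)=\dim\Stress(G,\p)$, which is precisely why the chain may be started at $\dim\Stress(G,\p)$; the step $(**)$ then reads $2\dim\Stress(G,\p)-2\dim\Stress^{\text{sym}}(G,\p)+2\binom{d}{2}-\binom{d+1}{2}\geq 2\binom{d}{2}-\binom{d+1}{2}$, and this is an equality exactly when $\dim\Stress^{\text{sym}}(G,\p)=\dim\Stress(G,\p)$. Because $\Stress^{\text{sym}}(G,\p)$ is a linear subspace of the finite-dimensional vector space $\Stress(G,\p)$, equal dimensions force $\Stress^{\text{sym}}(G,\p)=\Stress(G,\p)$. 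Unwinding the definition of $\Stress^{\text{sym}}$, this means that every stress $\omega$ on $(G,\p)$ satisfies $\omega_e=\omega_{-e}$ for all edges $e$ of $G$, i.e. is symmetric, which is the assertion.

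There is no genuine obstacle here: all of the analytic content already lives in Lemma~\ref{lemma: g2-lower-bound}, and the corollary is merely the observation that forcing that chain to hold with equality pins down $\Stress^{\text{sym}}(G,\p)=\Stress(G,\p)$. The only subtlety worth flagging is that one needs equality only in $(**)$, so there is no need to analyze when the bound \eqref{eq: symm-stress-dim} (equivalently, inequality $(*)$) is tight; keeping track of which inequality is being used avoids an unnecessary detour through the proof of \eqref{eq: symm-stress-dim}.
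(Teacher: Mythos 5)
Your argument is correct and is exactly the paper's proof: the authors likewise observe that when $g_2(G)=\binom{d}{2}-d$ the chain of inequalities at the end of the proof of Lemma~\ref{lemma: g2-lower-bound} collapses to equalities, forcing $\dim\Stress^{\text{sym}}(G,\p)=\dim\Stress(G,\p)$ and hence $\Stress^{\text{sym}}(G,\p)=\Stress(G,\p)$. Your remark that only equality in $(**)$ is needed is a fine (and accurate) refinement, but it does not change the substance of the argument.
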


The following result is another immediate consequence of Lemma \ref{lemma: g2-lower-bound}.

\begin{corollary} \label{inf-rig-subgraph}
Let $d \geq 3$ and let $(G,\p)$ be an infinitesimally rigid cs $d$-framework with $g_2 = {d \choose 2}-d$. If $G'$ is a subgraph of $G$ such that $(G',\p)$ is cs, infinitesimally $d$-rigid, and affinely spans $\R^d$, then $g_2(G') = {d \choose 2}-d$ and $\Stress(G',\p) = \Stress(G,\p)$.
\end{corollary}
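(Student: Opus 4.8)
The plan is to trap $g_2(G')$ between ${d \choose 2}-d$ from below and $g_2(G)={d \choose 2}-d$ from above, and then upgrade an inclusion of stress spaces to an equality by a dimension count. The ingredients are exactly Lemmas \ref{basic-rig-prop} and \ref{lemma: g2-lower-bound} together with one elementary observation about extending stresses by zero.

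First I would note that since $(G',\p)$ affinely spans $\R^d$ and $G'$ is a subgraph of $G$, the framework $(G,\p)$ also affinely spans $\R^d$; hence Lemma \ref{basic-rig-prop} applies to both frameworks. Using infinitesimal $d$-rigidity of each, it yields $g_2(G)=\dim\Stress(G,\p)$ and $g_2(G')=\dim\Stress(G',\p)$, and by hypothesis the former equals ${d \choose 2}-d$. Since $(G',\p)$ is an infinitesimally rigid cs $d$-framework that affinely spans $\R^d$, Lemma \ref{lemma: g2-lower-bound} gives the lower bound $g_2(G')\geq {d \choose 2}-d$.

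Next I would establish $\Stress(G',\p)\subseteq\Stress(G,\p)$ by extending any stress $\omega'$ on $(G',\p)$ to all of $E(G)$ via zero on the edges not in $G'$. The equilibrium condition at a vertex $v\in V(G')$ involves only the edges of $G$ incident to $v$; those outside $G'$ now carry weight zero, so the condition reduces to the one satisfied by $\omega'$ in $G'$. At a vertex $v\in V(G)\setminus V(G')$ every incident edge carries weight zero, so equilibrium holds trivially. This produces an injection $\Stress(G',\p)\hookrightarrow\Stress(G,\p)$, whence $\dim\Stress(G',\p)\leq\dim\Stress(G,\p)$.

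Combining the three estimates gives ${d \choose 2}-d\leq g_2(G')=\dim\Stress(G',\p)\leq\dim\Stress(G,\p)=g_2(G)={d \choose 2}-d$, so all inequalities are equalities. In particular $g_2(G')={d \choose 2}-d$, and the subspace $\Stress(G',\p)$ of $\Stress(G,\p)$ has the same finite dimension as the ambient space, forcing $\Stress(G',\p)=\Stress(G,\p)$. The argument is short; the only step demanding a moment's care is the extension-by-zero claim, which uses that $G'$ is a genuine subgraph of $G$ so that no edge of $G$ meeting a vertex outside $V(G')$ can lie in $G'$. I do not expect any real obstacle here.
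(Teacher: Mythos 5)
Your proposal is correct and follows essentially the same route as the paper: extend stresses on $(G',\p)$ by zero to get $\Stress(G',\p)\subseteq\Stress(G,\p)$, then sandwich $\dim\Stress(G',\p)$ between the lower bound of Lemma \ref{lemma: g2-lower-bound} and $\dim\Stress(G,\p)=g_2(G)={d\choose 2}-d$ via Lemma \ref{basic-rig-prop}, forcing equality of both the $g_2$-values and the stress spaces. Your explicit justification of the extension-by-zero step is a detail the paper leaves implicit, but the argument is the same.
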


\begin{proof}
Since $(G', \p)$ is a subframework of $(G,\p)$, $\Stress(G',\p) \subseteq \Stress(G,\p)$. Further, since both frameworks are infinitesimally rigid and cs, Lemma \ref{lemma: g2-lower-bound} implies that
\[
{d \choose 2}-d \leq \dim \Stress(G',\p) \leq \dim \Stress(G,\p)={d \choose 2}-d,
\]
and the statement follows.
\end{proof}

%%%%%%%%%%%%%%%%%%%%%%%
%%%%%%%%%%%%%%%%%%%%%%%
\section{Proof of the main result}
%%%%%%%%%%%%%%%%%%%%%%%
%%%%%%%%%%%%%%%%%%%%%%%

In this section we prove our main result. Following the custom, we write $g_2(P)$ instead of $g_2(\partial P)$.

\begin{theorem} \label{main-thm'}
Let $P$ be a cs simplicial $d$-polytope with $d \geq 4$ satisfying $g_2(P) = {d \choose 2}-d$.  Then $P$ can be obtained from the $d$-dimensional cross-polytope by symmetric stacking operations.
\end{theorem}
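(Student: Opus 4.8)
The plan is to argue by induction on the number of vertices $f_0(P)$. For the base case $f_0(P)=2d$ one invokes the classical fact that a centrally symmetric $d$-polytope has at least $2d$ vertices, with equality only for $\C^*_d$, so there is nothing to prove. For the inductive step I would single out the following assertion as the crux: \emph{if $P$ is as in the theorem and $f_0(P)>2d$, then $P$ has an antipodal pair of vertices $\{v,-v\}$ with $\deg_P(v)=\deg_P(-v)=d$ and with $v$ not adjacent to $-v$.} This is the content that the technical Theorem~\ref{main-pre-thm} of Section~4 is meant to provide; proving it is the real work, carried out in Sections~5--7.

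Granting the crux, the inductive step proceeds as follows. Since $v$ has degree $d$, the link $\lk_P(v)$ is a simplicial $(d-2)$-sphere on $d$ vertices, hence $\lk_P(v)=\partial\overline{W}$ with $W:=V(\lk_P(v))$. The set $W$ is not a facet of $P$: if it were, then $\st_P(v)\cup\overline{W}$ would be the boundary complex of a $d$-simplex sitting inside the $(d-1)$-sphere $\partial P$, forcing $P$ to be the $d$-simplex, which is not centrally symmetric. Therefore, by the beyond/beneath theory, $P_1:=\conv(\p(V(P)\setminus\{v\}))$ is a simplicial $d$-polytope in which $v$ lies strictly beyond the single facet $W$ and beneath all others, so $P=P_1\#_W S$ with $S$ the $d$-simplex on $W\cup\{v\}$. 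Because $v$ is not adjacent to $-v$, removing $v$ does not disturb $\st_P(-v)$; hence $-v$ is still a degree-$d$ vertex of $P_1$ with simplex link, and un-stacking it the same way produces $P_2:=\conv(\p(V(P)\setminus\{v,-v\}))$. As $V(P)\setminus\{v,-v\}$ is invariant under the involution and $\p$ respects central symmetry, $P_2$ is again a centrally symmetric simplicial $d$-polytope, and $P$ is recovered from $P_2$ by one symmetric stacking on the antipodal facet pair $\{W,-W\}$. A direct count gives $f_0(P_2)=f_0(P)-2$ and $f_1(P_2)=f_1(P)-2d$, so $g_2(P_2)=g_2(P)={d\choose 2}-d$; by the inductive hypothesis $P_2$ is obtained from $\C^*_d$ by symmetric stacking, and hence so is $P$.

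The heart of the matter — and the step I expect to be the main obstacle — is the crux assertion, and this is exactly where the symmetry of the stress space enters. By Corollary~\ref{cor: symmetric-stresses}, $\Stress(P,\p)=\Stress^{\mathrm{sym}}(P,\p)$: every stress on $(P,\p)$ is invariant under the involution, so the set of edges carrying a nonzero weight in some stress is itself invariant under the involution. The plan is to show that any configuration obstructing the crux would produce a stress whose support is \emph{not} invariant, a contradiction. Two kinds of obstruction must be excluded. (i) An edge $\{w,-w\}$ joining a vertex to its antipode: here I would analyze $\lk_P(\{w,-w\})$ together with the infinitesimally rigid stars of nearby faces (Lemmas~\ref{lemma: star-of-face-rigid} and~\ref{lemma: missing-face-stress}) to build a stress concentrated near $w$ that cannot be matched near $-w$. (ii) The absence of any degree-$d$ vertex: using that each $\st_P(v)$ is infinitesimally rigid with $\dim\Stress(\st_P(v),\p)=g_2(\lk_P(v))$ and that such stresses extend by zero to $(P,\p)$, I would analyze the $(d-2)$-dimensional vertex links, locating chordless $4$-cycles or low-dimensional missing faces in a link with $g_2>0$ and applying Lemma~\ref{lemma: missing-face-stress}, in order either to force a simplex vertex directly or again to exhibit a stressed edge $e$ whose antipode $-e$ is unstressed. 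The delicacy — and the reason this needs the three later sections rather than a short argument — is that the standard stress-producing mechanisms are insensitive to the involution, so one must keep careful track of how the antipodal copy of each local obstruction behaves and where the asymmetry ultimately surfaces. Throughout, $d\geq 4$ is used essentially: Lemma~\ref{lemma: star-of-face-rigid} requires $d-1\geq 3$, and the lower bound theorem is applied to the $(d-2)$-dimensional links.
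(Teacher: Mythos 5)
Your inductive-step mechanics are fine: a degree-$d$ vertex $v$ has $\lk(v)=\partial\overline{W}$, so $W$ is a missing facet, the un-stacking of $v$ and then $-v$ is legitimate (and note that ``$v$ not adjacent to $-v$'' is automatic, since the involution on a cs simplicial polytope is free on faces, so an edge $\{w,-w\}$ can never exist), and the $g_2$ bookkeeping is correct. This part closely parallels the paper's own reduction, except that the paper pivots on an arbitrary missing facet rather than a degree-$d$ vertex: cutting at a missing facet decomposes $g_2$ additively, and the Lower Bound Theorem plus Stanley's inequality force the outer pieces to be stacked, which is a much cheaper reduction than producing a degree-$d$ vertex.

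The genuine gap is your ``crux'' assertion, which carries all of the difficulty and is neither proved nor correctly sourced. Theorem~\ref{main-pre-thm} does not say, and does not imply, that a minimizer with $f_0(P)>2d$ has an antipodal pair of degree-$d$ vertices: it applies only to \emph{prime} cs polytopes with $d\geq 5$, and what it asserts is that antipodal vertex stars share exactly $2d-2$ vertices and that $G(\st(u)\cup\st(-u))=G(P)$; the paper then uses this in a degree count ($2\deg(u)=f_0(P)+2d-4$, summed over vertices) to force $f_0(P)=2d$, i.e.\ the prime case is already the cross-polytope, with no un-stacking of low-degree vertices involved. In particular, for a non-prime $P$ (which is exactly your $f_0(P)>2d$ situation) Theorem~\ref{main-pre-thm} is not applicable at all, so your claim that it ``provides'' the crux is unfounded; and your own sketch for proving the crux is not a proof: item (i) concerns an impossible configuration (antipodal vertices are never adjacent), and item (ii) --- ruling out the absence of a degree-$d$ vertex via links, chordless $4$-cycles, and asymmetric stresses --- is precisely the hard content, for which no workable mechanism is given (indeed, as stated your crux is essentially equivalent to the theorem itself). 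Finally, the case $d=4$ needs a separate argument: the paper's structural machinery requires $d\geq 5$, and the $4$-dimensional prime case is settled by invoking Zheng's classification of prime polytopes with $g_2=2$ together with Walkup's theorem; your proposal does not address this.
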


The proof of Theorem \ref{main-thm'} relies on a key technical result, Theorem \ref{main-pre-thm} below.  We will state that result in this section and then use it to prove the main result.  In Sections \ref{section:5}, \ref{section:6} and \ref{section:7}, we will establish a sequence of lemmas that will ultimately be used to prove Theorem \ref{main-pre-thm}.

First, we reduce the problem to the case that $P$ is prime and $d \geq 5$. Recall that a simplicial polytope $P$, which is not a simplex, is {\em prime} if $\partial P$ has no missing facets.

\begin{lemma} \label{no-missing-facets}
Let $P$ be a cs simplicial $d$-polytope with $d \geq 4$ satisfying $g_2(P) = {d \choose 2}-d$. If $\partial P$ contains a missing facet, then $P$ can be decomposed as $$P = Q \# P' \# (-Q),$$ where $Q$ is a stacked $d$-polytope and $P'$ is a cs simplicial $d$-polytope satisfying $g_2(P') = {d \choose 2}-d$. In particular, $P$ can be obtained from a prime cs simplicial $d$-polytope $R$ satisfying $g_2(R) = {d \choose 2}-d$ through symmetric stacking.
\end{lemma}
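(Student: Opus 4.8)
The plan is to use central symmetry to convert a single missing facet into a \emph{symmetric pair} of cuts, peel off a symmetric pair of stacked caps, and then induct on the number of vertices. So let $\sigma$ be a missing facet of $\partial P$. First I would record two elementary facts. The antipode $-\sigma$ is again a missing facet of $\partial P$: the free involution permutes the faces of $\partial P$, hence carries minimal non-faces to minimal non-faces. And $\sigma\cap(-\sigma)=\emptyset$: every two-element subset of $\sigma$ is an edge of $P$, whereas $\{v,-v\}$ is never an edge, since the open segment $(\p(v),-\p(v))$ passes through $\mathbf{0}\in\mathrm{int}(P)$ and therefore is not a face of $P$.

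Next I would invoke the classical fact that a missing facet of a simplicial $d$-polytope induces a connected-sum decomposition into two simplicial $d$-polytopes (realized by slicing along the $(d-1)$-simplex spanned by the vertices of the missing facet, whose relative interior lies in the interior of the polytope, meeting $P$ in exactly that simplex). Cutting $P$ along $\conv(V(\sigma))$ gives $P=R_1\#_\sigma R_2$; a short combinatorial argument then shows $-\sigma$ is a missing facet of exactly one of $R_1,R_2$: for $d\geq 3$ any two $(d-1)$-subsets of $-\sigma$ intersect, a set which is a face of both $R_1$ and $R_2$ is a subset of $\sigma$, and $\sigma\cap(-\sigma)=\emptyset$, so all $(d-1)$-subsets of $-\sigma$ are faces of the same piece. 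Cutting that piece along $\conv(V(-\sigma))$ then produces $P=Q\#_\sigma P'\#_{-\sigma}Q''$, a connected sum of three simplicial $d$-polytopes. Since $-\conv(V(\sigma))=\conv(V(-\sigma))$, the union of the two slicing simplices is invariant under $x\mapsto -x$; this involution must fix the middle piece and interchange the outer two, so $Q''=-Q$, and the middle piece, which we call $P'$, satisfies $P'=-P'$, hence is a cs simplicial $d$-polytope. Thus $P=Q\#P'\#(-Q)$.

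Then I would do the $g_2$-bookkeeping. Counting vertices and edges across a gluing facet gives $g_2(A\#_\tau B)=g_2(A)+g_2(B)$ (the correction term $d^2-\binom d2-\binom{d+1}2$ vanishes), so $g_2(P)=2g_2(Q)+g_2(P')$ using $g_2(-Q)=g_2(Q)$. By the Lower Bound Theorem $g_2(Q)\geq 0$ (here $Q$ is a simplicial $d$-polytope with $d\geq4$), and by Lemma~\ref{lemma: g2-lower-bound} $g_2(P')\geq\binom d2-d$ (as $P'$ is cs, affinely spans $\R^d$, and is infinitesimally rigid by Theorem~\ref{Whiteley-thm}). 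Since $0+(\binom d2-d)\leq 2g_2(Q)+g_2(P')=\binom d2-d$, both estimates are tight: $g_2(Q)=0$ and $g_2(P')=\binom d2-d$. By the equality case of the Lower Bound Theorem, $g_2(Q)=0$ forces $Q$ to be stacked, which establishes the displayed decomposition.

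Finally, the ``in particular'' clause would follow by strong induction on $f_0(P)$ over cs simplicial $d$-polytopes with $g_2=\binom d2-d$: if $\partial P$ is prime, take $R:=P$; otherwise apply the decomposition above and note $f_0(P')=f_0(P)-f_0(Q)-f_0(-Q)+2d\leq f_0(P)-2$ (each of $Q,-Q$ has at least $d+1$ vertices), so by the inductive hypothesis $P'$ is obtained by symmetric stacking from a prime cs simplicial $d$-polytope $R$ with $g_2(R)=\binom d2-d$, and attaching the stacked caps $Q$ at $\sigma$ and $-Q$ at $-\sigma$ — performed as a sequence of single-vertex stackings, interleaved so that each stacking at a facet is paired with the antipodal step — exhibits $P$ as a symmetric stacking of $R$. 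The only slightly delicate points I anticipate are the classical slicing fact of the second paragraph (verifying that a missing facet really does cut a simplicial polytope into simplicial polytopes) and the bookkeeping that $-\sigma$ survives as an intact missing facet in one piece after the first cut; the real content, and the only place the extremality hypothesis $g_2(P)=\binom d2-d$ is spent, is the squeeze in the third paragraph, which simultaneously forces the caps to be stacked and transfers extremality of $g_2$ to the middle piece.
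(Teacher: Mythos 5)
Your proposal is correct and follows essentially the same route as the paper's proof: cut $\partial P$ along the antipodal pair of missing facets $\sigma$ and $-\sigma$ to get $P = Q \# P' \# (-Q)$ with $P'$ cs, use additivity of $g_2$ under connected sums together with the Lower Bound Theorem and the cs lower bound $g_2(P') \geq \binom{d}{2}-d$ to squeeze out $g_2(Q)=0$ (so $Q$ is stacked) and $g_2(P')=\binom{d}{2}-d$, then induct for the ``in particular'' clause. The only cosmetic differences are that you induct on $f_0(P)$ rather than on the number of missing facets and that you spell out some details (e.g., that $-\sigma$ remains a missing facet of one piece after the first cut) which the paper leaves implicit.
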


\begin{proof}
Let $\tau = \{v_1,\ldots,v_d\}$ be a missing facet in $\partial P$.  Then $-\tau = \{-v_1,\ldots,-v_d\}$ is also a missing facet in $\partial P$.  Cutting $P$ along the affine span of the vertices of $\tau$ and along the affine span of the vertices of $-\tau$ gives a decomposition of $P$ as $Q \# P' \# Q'$, so that $Q' = -Q$ and $P'$ is cs and simplicial.  Thus $g_2(Q)$ and $g_2(Q')$ are nonnegative and $g_2(P') \geq {d \choose 2}-d$.  But $${d \choose 2}-d = g_2(P) = g_2(Q)+g_2(P') + g_2(Q') \geq 0 + {d \choose 2}-d + 0.$$ Hence $g_2(P') = {d \choose 2}-d$ and $g_2(Q) = g_2(Q') = 0$, which implies $Q$ and $Q'$ are stacked by the Lower Bound Theorem. The in particular part of the statement now follows by induction on the number of missing facets of $\partial P$.
\end{proof}

\begin{proposition}
Let $P$ be a cs prime simplicial $4$-polytope with $g_2(P) = {4 \choose 2}-4 = 2$.  Then $P$ is a cross-polytope.
\end{proposition}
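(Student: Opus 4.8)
The strategy is to extract from $g_2(P)=2$ the rigidity-theoretic facts that $(P,\p)$ is infinitesimally rigid with $\dim\Stress(P,\p)=2$ (Whiteley's Theorem~\ref{Whiteley-thm} and Lemma~\ref{basic-rig-prop}) and that \emph{every} stress on $(P,\p)$ is symmetric (Corollary~\ref{cor: symmetric-stresses}), and then to combine this with the primality of $P$ to pin $P$ down as $\C^*_4$. Two elementary observations will be used throughout: in a cs polytope each antipodal pair $\{w,-w\}$ is a missing edge of $\partial P$ (the segment $[w,-w]$ meets the interior point $\0$) and the central involution permutes the missing faces of $\partial P$; and primality forces $\deg_P(v)\ge 5$ for every vertex $v$, since if $\deg_P(v)=4$ then $\lk_P(v)=\partial\overline{\{a,b,c,d\}}$ and the set $\{a,b,c,d\}$, having all of its proper subsets as faces, is either a missing facet (impossible) or a facet, whence $\partial P=\partial\overline{\{v,a,b,c,d\}}$ and $P$ is a $4$-simplex, contradicting central symmetry.

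The whole problem reduces to showing $f_0(P)=8$. Indeed, $g_2(P)=2$ gives $f_1(P)=4f_0(P)-8$; together with the $f_0/2$ antipodal missing edges this already forces $f_0(P)\ge 8$, and if $f_0(P)=8$ then $\partial P$ has exactly $\binom{8}{2}-24=4$ missing edges, so these must be precisely the four antipodal pairs and $G(P)$ is $K_8$ with a perfect matching removed, i.e.\ the graph of $\C^*_4$. By Dehn--Sommerville $\partial P$ then has $16$ facets, which forces the facets of $\partial P$ to be exactly the $16$ four-element cliques of $G(P)$ (the transversals of the antipodal pairs); hence $\partial P=\partial\C^*_4$ and $P$ is a cross-polytope. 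So it remains to rule out $f_0(P)\ge 10$.

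For this I would work through the vertex links. Each $\lk_P(v)$ is a triangulated $2$-sphere on $\deg_P(v)$ vertices, so $\st_P(v)=v*\lk_P(v)$ carries no stress (a $2$-sphere has $g_2=0$, and coning preserves the stress space by the Cone Lemma); thus the entire $2$-dimensional symmetric stress space must be ``seen'' by the edges meeting each individual vertex in a tightly constrained way. The engine is a primality-driven dictionary between the combinatorics of $\lk_P(v)$ and the missing faces of $\partial P$: if $\rho$ is an empty triangle of $\lk_P(v)$ then $\rho$ is a missing triangle of $\partial P$ (otherwise $\rho\in\partial P$ but $\rho\cup\{v\}\notin\partial P$ is a missing facet), and more generally each pair $\{a,b\}$ of link-vertices that is a non-edge of $\lk_P(v)$ yields either a missing edge $\{a,b\}$ or a missing triangle $\{v,a,b\}$ of $\partial P$. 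When $f_0(P)\ge 10$ the average degree $2f_1/f_0=8-16/f_0$ exceeds $6$, so some vertex $v$ has $\deg_P(v)\ge 7$, and then $\lk_P(v)$ has at least $\binom{\deg_P(v)-3}{2}\ge 6$ non-edges, each producing a missing face of $\partial P$. Feeding the resulting missing triangles (and their antipodes, using symmetry of the stress space) into Lemma~\ref{lemma: missing-face-stress}(2), and handling the missing edges via Lemma~\ref{lemma: missing-face-stress}(1) through suitable chordless $4$-cycles, should produce more than two linearly independent stresses, contradicting $\dim\Stress(P,\p)=2$. Equivalently, this forces every link to be the octahedron $\partial\C^*_3$, whence $\deg_P(v)=6$ for all $v$ and $6f_0=2f_1=8f_0-16$, so $f_0=8$.

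The genuine obstacle is this last step: converting ``$\dim\Stress(P,\p)=2$ with all stresses symmetric'' into a contradiction with the abundance of missing faces forced by a high-degree link. This requires establishing linear independence (or a suitable filtration) of the stresses coming from distinct missing triangles and their antipodes — e.g.\ choosing the distinguished edges and apex vertices in Lemma~\ref{lemma: missing-face-stress}(2) so that no chosen edge lies in the star of another apex — and dealing with the scenario in which the link's non-edges contribute mostly missing \emph{edges} rather than missing triangles (so that the stress-producing lemmas do not apply directly and one must instead manufacture chordless $4$-cycles or exploit the central symmetry of $P$ itself, not of the link). It is quite possible that in the paper this is not argued from scratch but deduced from the general structural statement (Theorem~\ref{main-pre-thm}) applied to the vertex figures of $P$, which are simplicial $3$-polytopes governed by the same rigidity mechanism.
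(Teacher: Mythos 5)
Your reduction steps are fine as far as they go: the observation that $g_2(P)=2$ gives $f_1=4f_0-8$, that the antipodal pairs are missing edges, and that $f_0(P)=8$ forces $G(P)$ to be $K_8$ minus a perfect matching and hence $P=\C^*_4$ (indeed, one can shortcut this: the cross-polytope is the \emph{only} cs $4$-polytope on $8$ vertices). But the heart of the proposition is ruling out $f_0(P)\ge 10$, and there your argument is only a program, as you yourself concede. Two things are missing and neither is routine. First, the dictionary from non-edges of a high-degree link to missing faces of $\partial P$ may produce mostly missing \emph{edges}, and a missing edge by itself yields a stress only if you can exhibit a suitable chordless $4$-cycle through it (Lemma \ref{lemma: missing-face-stress}(1)); you have no mechanism guaranteeing such cycles, nor that the missing edges are antipodal pairs. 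Second, and more seriously, Lemma \ref{lemma: missing-face-stress} only guarantees, for each missing face, \emph{some} stress nonzero on one designated edge, supported on a large vertex star; linear independence of the stresses coming from distinct missing triangles (or from a triangle and its antipode --- note that by Corollary \ref{cor: symmetric-stresses} these may well be the \emph{same} stress) is not established, and without producing three provably independent stresses there is no contradiction with $\dim\Stress(P,\p)=2$. So the proof has a genuine gap precisely at the step you flag as ``the genuine obstacle.'' Your fallback guess --- deduce the case from Theorem \ref{main-pre-thm} applied to vertex figures --- does not work either: that theorem is stated and proved only for $d\ge 5$ (its proof uses, e.g., Lemma \ref{lemma: no-missing-faces} and Proposition \ref{prop: antipodal-pairs-in-links}, both requiring $d\ge 5$), which is exactly why the paper treats $d=4$ separately.

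The paper's actual argument is quite different and does not run the rigidity machinery from scratch in dimension $4$. It invokes the classification of prime simplicial $4$-polytopes with $g_2=2$ from Theorem 5.4 of \cite{Zheng-g2}: such a $P$ is either $\C^*_4$ or is obtained from the boundary of a simplicial $4$-polytope by a stellar subdivision at a $2$-face. In the latter case the new vertex $v$ and its antipode $-v$ both have links that are suspensions of triangles; performing the inverse operation (a stellar weld) at $v$ and at $-v$ produces a cs simplicial $3$-sphere $\Sigma$ with $g_2(\Sigma)=0$, which by Walkup's theorem \cite{Walkup} would have to be the boundary of a stacked $4$-polytope --- impossible, since a stacked polytope cannot be centrally symmetric. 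If you want a self-contained proof along your lines, you would need to supply the independence/abundance argument above; as written, the proposal does not establish the statement.
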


\begin{proof}
Since $P$ is prime and $g_2(P) = 2$, it follows from Theorem 5.4 in \cite{Zheng-g2} that either $P$  is  $\C^*_4$ or $\partial P$ can be obtained from the boundary complex of a simplicial $4$-polytope by performing a stellar subdivision at a $2$-dimensional face. In the former case we are done. In the latter case, let $v$ be the new vertex introduced by this stellar subdivision and note that $\lk_P(v)$ is the suspension of the boundary of a triangle. Since $P$ is cs, vertex $v$ has an antipodal vertex $-v$ whose link is isomorphic to the link of $v$. Suppose $\lk_P(v)$ is the suspension of the cycle on vertices $\{a,b,c\}$ so that $\lk_P(-v)$ is the suspension of the cycle on vertices $\{-a,-b,-c\}$.  Let $\Sigma$ be the simplicial sphere obtained from $\partial P$ by performing a stellar weld at $v$ and $-v$ (i.e., remove $v$ and $-v$ together with their incident edges, fill in the triangles $\{a,b,c\}$ and $\{-a,-b,-c\}$ and join them with their suspending vertices).  This creates a new {\em cs} simplicial $3$-sphere  with $g_2(\Sigma) = 0$. By Walkup's result \cite{Walkup}, $\Sigma$ must be the boundary complex of a stacked $4$-polytope, which is impossible as a stacked polytope cannot be cs.
\end{proof}

Now we only need to establish Theorem \ref{main-thm'} for prime cs simplicial polytopes of dimension $d \geq 5$. Before we can complete the proof, we state our main technical theorem. { For a simplicial complex $\Delta$, we denote the graph of $\Delta$ (i.e., the $1$-dimensional skeleton of $\Delta$) by $G(\Delta)$.}

\begin{theorem} \label{main-pre-thm}
Let $P$ be a prime cs $d$-polytope with $d \geq 5$ and $g_2(P) = {d \choose 2}-d$.  For every vertex $u$ of $P$,
\begin{enumerate}
\item the complexes $\st(u)$ and $\st(-u)$, and hence also $\lk(u)$ and $\lk(-u)$, share exactly $2d-2$ vertices; and
\item the graphs of $\st(u) \cup \st(-u)$ and $P$ coincide: $G(\st(u) \cup \st(-u)) = G(P)$.
\end{enumerate}
\end{theorem}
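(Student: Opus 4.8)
The plan is to exploit Corollary \ref{cor: symmetric-stresses}: since $g_2(P) = {d \choose 2}-d$, every stress on $(P,\p)$ is symmetric. The strategy is to assume one of the two conclusions fails and manufacture an \emph{asymmetric} stress, contradicting this. Throughout I would use the natural embedding $\p$ and repeatedly invoke Lemma \ref{lemma: star-of-face-rigid} (stars of small faces are rigid) together with Lemma \ref{lemma: missing-face-stress} (missing edges carry localized stresses).

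\smallskip
\noindent\textbf{Step 1: A stress localized near a single vertex and its link.}
First I would observe that for any vertex $u$ of $P$, the framework $(\st(u),\p)$ is infinitesimally rigid (Lemma \ref{lemma: star-of-face-rigid} with $\tau = \{u\}$, valid since $d\geq 5 > 3$), and likewise $(\st(-u),\p)$. I want to understand $(\st(u)\cup\st(-u),\p)$. If these two stars share $d$ affinely independent vertices, the Gluing Lemma makes the union rigid; more to the point, I would want to show that \emph{generically} $\st(u)$ and $\st(-u)$ are "far apart" in $P$, so any stress supported on edges not in $\st(u)\cup\st(-u)$ would be forced by symmetry. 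The real content: if $v$ is a vertex of $P$ that is a neighbor of $u$ but \emph{not} a neighbor of $-u$ (i.e.\ $v\in\lk(u)$, $v\notin \lk(-u)$), then by central symmetry $-v$ is a neighbor of $-u$ but not of $u$. The pair $\{u,v\}$ is an edge; I would try to produce a stress nonzero on $\{u,v\}$ but zero on the antipodal edge $\{-u,-v\}$, or vice versa. This is where Lemma \ref{lemma: missing-face-stress}(1) enters: if there is a chordless $4$-cycle through $\{u,v\}$ that does \emph{not} involve $-u$ or $-v$, then the stress it produces (supported, by the proof of that lemma, on $\st(w)\cup\st(z)\cup\{e\}$ for the opposite edge $\{w,z\}$) can be arranged to be zero on the antipodal edge — contradicting symmetry.

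\smallskip
\noindent\textbf{Step 2: Part (1) — the shared vertices are exactly $2d-2$.}
Note $\lk(u)$ and $\lk(-u)$ are each boundary complexes of simplicial $(d-1)$-polytopes, hence have at least $2(d-1) = 2d-2$ vertices (a $(d-1)$-polytope has $\geq d$ vertices, and being cs-related... actually I only need $\geq d$; the count $2d-2$ will come from a different source). Here is the right framing: $\lk(u)$ has $f_0(\lk(u))$ vertices, and these are exactly the neighbors of $u$ in $G(P)$. The claim $|V(\st(u))\cap V(\st(-u))| = 2d-2$ says: among the neighbors of $u$, exactly $2d-2$ of them are also neighbors of $-u$. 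I would first get the lower bound. Consider any facet $\sigma$ of $P$ containing $u$; does $-u$ see all of $\sigma\setminus\{u\}$? Not obviously. Instead I would argue via rigidity: if $V(\st(u))\cap V(\st(-u))$ spanned a subspace of dimension $\geq d-1$, the Gluing Lemma would make $(\st(u)\cup\st(-u),\p)$ rigid; then Corollary \ref{inf-rig-subgraph} would force $\Stress(\st(u)\cup\st(-u),\p) = \Stress(P,\p)$, which combined with Part (2) (proved next) and a dimension count against $g_2$ would pin things down. For the \emph{exact} value $2d-2$: I suspect one shows $\leq 2d-2$ by noting that if $u$ and $-u$ had $2d-1$ common neighbors, one could find a chordless cycle or a missing face yielding an asymmetric stress; and $\geq 2d-2$ because $P$ is prime (no missing facets), so the "stacked-like" degeneracies that would reduce the overlap are excluded. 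I would model the base case on the cross-polytope, where $\lk(u)$ and $\lk(-u)$ are the same $(d-1)$-cross-polytope on $2d-2$ vertices — so $2d-2$ is exactly the expected answer.

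\smallskip
\noindent\textbf{Step 3: Part (2) — $G(\st(u)\cup\st(-u)) = G(P)$, and why it is the crux.}
This is the main obstacle. I need: every edge of $P$ is incident to $u$, to $-u$, or joins a neighbor of $u$ to a neighbor of $-u$ — equivalently, $\{u,-u\}$ is a "dominating pair" in a strong sense, that the only vertices of $P$ are the $2d-2$ common neighbors plus the private neighbors of $u$ and of $-u$... no wait, that would force $f_0(P)$ small. Rather, Part (2) says the \emph{edge set} of $P$ is exhausted by the stars of $u$ and $-u$. I would prove this by contradiction: suppose $e = \{a,b\}$ is an edge of $P$ with $a,b \notin \st(u)\cup\st(-u)$ in the sense that $e$ is not in either star (so $e\cup\{u\}\notin P$ and $e\cup\{-u\}\notin P$). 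Then in $G(\st(u)\cup\st(-u))$, the edge $e$ is "missing" (its endpoints may be present but $e$ is not there if, say, $a\notin\lk(u)$). The plan: show $(\st(u)\cup\st(-u),\p)$ is infinitesimally rigid (using Part (1) and the Gluing Lemma, since $2d-2 \geq d+1$ vertices in common give an affine span of dimension $\geq d-1$ once we check general position — this is where primeness and $d\geq 5$ are used to rule out the shared vertices being degenerate). Then by Corollary \ref{inf-rig-subgraph}, $\Stress(\st(u)\cup\st(-u),\p) = \Stress(P,\p)$, so adding any edge $e$ of $P$ outside this graph does not increase the stress space — hence $e$ lies in a missing face situation forcing, via Lemma \ref{lemma: missing-face-stress}, an asymmetric stress (choose $e$ so its antipode $-e$ \emph{is} already "reached" differently). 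The delicate point I expect to fight with is verifying the affine-span hypothesis of the Gluing Lemma: that the $2d-2$ common vertices of $\st(u)$ and $\st(-u)$ are not all contained in a hyperplane. For the cross-polytope they span $\R^d$; in general I would argue that if they lay in a hyperplane $H$, then $\lk(u)$ would have $\geq d$ vertices off... actually all its vertices except possibly few would be constrained, contradicting that $\lk(u)$ is the boundary of a full-dimensional $(d-1)$-polytope positioned generically relative to $u$. Closing this gap cleanly — likely by an induction on $d$ passing to the vertex figure at $u$ and invoking the statement in dimension $d-1$, exactly as in the proof of Lemma \ref{lemma: star-of-face-rigid} — is the step I would budget the most effort for.
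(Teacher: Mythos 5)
Your overall framework is the right one---symmetric stresses via Corollary \ref{cor: symmetric-stresses}, rigid vertex stars, the Gluing Lemma, and Corollary \ref{inf-rig-subgraph}---and your Step 3 is recognizably the paper's Proposition \ref{prop: cs-rigid-is-whole-graph}, with two corrections. The paper splits part (2) into a vertex statement and an edge statement: a vertex $u$ missing from the subgraph is excluded by finding an antipodal pair $v,-v$ in $\lk(u)$ (this uses part (1)'s machinery, Proposition \ref{prop: antipodal-pairs-in-links}), so that $(u,v,-u,-v)$ is an induced $4$-cycle and Lemma \ref{lemma: missing-face-stress}(1) gives a stress outside $\Stress(G',\p)$; missing \emph{edges} are then ruled out by pure counting (equal $g_2$ and equal $f_0$ force equal $f_1$), not by another stress argument. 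Also, the affine-span hypothesis of the Gluing Lemma is not handled by an induction on vertex figures: the shared vertices of $\st(u)$ and $\st(-u)$ automatically come in antipodal pairs ($v$ is a common neighbor of $u$ and $-u$ iff $v,-v\in\lk(u)$), so $d-1$ such pairs span a $(d-1)$-dimensional subspace once one invokes the standing genericity assumption (perturb $P$ so that any $d$ vertices, no two antipodal, are affinely independent).

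The genuine gap is part (1), in both directions, and that is where essentially all of the paper's work lies. Because common neighbors come in antipodal pairs, the claim is that the number of such pairs is exactly $d-1$; your ``$2d-1$ common neighbors'' scenario cannot even occur, and neither of your proposed mechanisms is an argument. For the upper bound the paper proves Lemma \ref{lemma: not-d-antipodal-pairs}: if there were $\geq d$ pairs, project $\Lambda=\lk(u)\cup\lk(-u)$ orthogonally onto the hyperplane normal to $\p(u)$, use the Cone and Gluing Lemmas to see that $(\Lambda,\pi\circ\p)$ is an infinitesimally rigid cs $(d-1)$-framework, apply Lemma \ref{lemma: g2-lower-bound} in dimension $d-1$, and a face count then forces $g_2(\st(u)\cup\st(-u),\p)\geq \binom{d}{2}-d+2$, contradicting Corollary \ref{inf-rig-subgraph}; no chordless-cycle argument substitutes for this. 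For the lower bound, ``primeness excludes the degeneracies'' is a hope, not a proof: the paper needs the whole chain of Sections 6--7 --- the symmetric Swartz operation and its rigidity (Theorem \ref{thm: thmB}) to show every vertex link is prime (Corollary \ref{cor: prime-links}), hence no face of size $\leq d-2$ has a simplex boundary as its link (Lemma \ref{lemma: no-missing-faces}, which itself uses Lemma \ref{lemma: missing-face-graph-lemma} and symmetric stresses), hence links of faces of size $\leq d-3$ are not stacked (Corollary \ref{cor: links-not-stacked}), and finally an induction on codimension (Proposition \ref{prop: antipodal-pairs-in-links}) whose base case converts a nontrivial symmetric stress on $\st(\tau)$, necessarily supported on $\lk(\tau)\cap\lk(-\tau)$, into the existence of at least $4$ antipodal pairs there. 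None of this appears in your proposal, and without the resulting lower bound of $d-1$ antipodal pairs in $\lk(u)$ you cannot even launch the Gluing-Lemma step of part (2), since you have no control on the affine span of $\st(u)\cap\st(-u)$.
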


The proof of this theorem is surprisingly involved.  First, we show $\st(u)$ and $\st(-u)$ share exactly $2d-2$ vertices (see Lemma \ref{lemma: not-d-antipodal-pairs} and Proposition \ref{prop: antipodal-pairs-in-links}).  This will in turn imply that the graph of $\st(u) \cup \st(-u)$, with its embedding in $\R^d$ induced by the vertex coordinates of $P$, is cs and infinitesimally $d$-rigid (see Corollary \ref{cor: star-union-rigid}), and hence that $G(\st(u) \cup \st(-u))$ is exactly the graph of $P$ (see Proposition \ref{prop: cs-rigid-is-whole-graph}).  However, assuming Theorem \ref{main-pre-thm} holds, we are now in a position to complete the proof of our main result.

\begin{proposition}
Let $P$ be a prime cs $d$-polytope with $d \geq 5$ and $g_2(P) = {d \choose 2}-d$.  Then $P$ is a cross-polytope.
\end{proposition}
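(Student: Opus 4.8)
The plan is to use Theorem~\ref{main-pre-thm} to pin down the combinatorics of $P$. Fix a vertex $u$. Part~(1) of that theorem says that $\lk(u)$ and $\lk(-u)$ have exactly $2d-2$ common vertices and (since the ``exactly $2d-2$'' must hold simultaneously for the stars and the links) that $u$ and $-u$ are non-adjacent; part~(2) says $V(P)=V(\st(u))\cup V(\st(-u))$. Hence $V(P)$ splits as a disjoint union $\{u,-u\}\sqcup W\sqcup B\sqcup(-B)$, where $W$ is the set of common neighbors of $u$ and $-u$ (of size $2d-2$), $B$ is the set of vertices adjacent to $u$ but not to $-u$, and $-B=\alpha(B)$ is the set adjacent to $-u$ but not to $u$. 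The heart of the argument is to show $B=\emptyset$: granting this, $f_0(P)=2d$ and the only non-edges of $G(P)$ are the $d$ antipodal pairs, so $G(P)=K_{2d}$ minus a perfect matching, which is exactly $G(\C^*_d)$.

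To prove $B=\emptyset$ I would induct on $d$, with base case $d=4$ given by the Proposition proved above. For the inductive step the aim is to locate a copy of $\C^*_{d-1}$ inside $\st(u)\cap\st(-u)$. Using Lemma~\ref{lemma: star-of-face-rigid} and the Cone Lemma, one arranges an infinitesimally rigid embedding of $\lk(u)$ in $\R^{d-1}$ obtained from $\p$ by a projection that commutes with $x\mapsto -x$, so that the restriction to $\lk(u)\cap\lk(-u)$ is a cs $(d-1)$-framework on the $2(d-1)$ vertices of $W$. Showing this subframework is infinitesimally rigid and affinely spanning, Lemma~\ref{lemma: g2-lower-bound} and Corollary~\ref{inf-rig-subgraph} would force $g_2(\lk(u)\cap\lk(-u))=\binom{d-1}{2}-(d-1)$; showing moreover that $\lk(u)\cap\lk(-u)$ is a prime boundary complex of a $(d-1)$-polytope, the induction hypothesis would identify it with $\partial\C^*_{d-1}$. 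Since it is then a $(d-2)$-sphere sitting as a full-dimensional subcomplex of the $(d-2)$-sphere $\lk(u)$, it must equal $\lk(u)$, whence $V(\lk(u))=W$ and $B=\emptyset$. I expect this to be the main obstacle: the delicate points are the infinitesimal rigidity and, especially, the polytopality of $\lk(u)\cap\lk(-u)$, and it may be that what actually works is a more direct argument producing a contradiction from a chordless $4$-cycle through a non-edge $\{b,-u\}$ with $b\in B$, using that every stress on $(P,\p)$ is symmetric (Corollary~\ref{cor: symmetric-stresses}).

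With $G(P)=G(\C^*_d)$ in hand, write $V(P)=\{\pm v_1,\dots,\pm v_d\}$ with the pairs $\{v_i,-v_i\}$ the only non-edges. Every facet of $\partial P$ is a $d$-vertex simplex, hence contains no antipodal pair, hence contains exactly one vertex from each pair; so facets correspond to sign vectors in $\{+,-\}^d$. As $\partial P$ is a pseudomanifold, each ridge lies in exactly two facets, and the ridge obtained from a facet by deleting its vertex in the $i$-th pair can only be completed by re-inserting that vertex or its antipode --- so the two facets through it differ by flipping the $i$-th coordinate. Since the flip graph on $\{+,-\}^d$ is connected, every sign vector occurs as a facet, i.e., $\partial P=\partial\C^*_d$. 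Finally, $0$ lies in the interior of $P$, so the vertices of any facet are linearly independent, and the linear isomorphism sending them to the standard basis of $\R^d$ carries $P$ onto $\C^*_d$.
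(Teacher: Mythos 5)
Your reduction to showing $B=\emptyset$, and your endgame from ``$G(P)$ is $K_{2d}$ minus a perfect matching'' to $P\cong\C^*_d$ (facets as sign vectors, ridge flips, connectivity of the hypercube, then a linear isomorphism), are correct --- in fact you prove the classical fact the paper merely cites, that the cross-polytope is the only cs $d$-polytope with $2d$ vertices. The problem is that the heart of the matter, $B=\emptyset$, is not actually proved: you offer a speculative induction on $d$ and yourself flag its delicate points, and those points are genuine obstacles. A priori $\lk(u)\cap\lk(-u)$ need not even be a $(d-2)$-sphere, let alone the boundary complex of a \emph{prime} simplicial $(d-1)$-polytope, so the inductive hypothesis (a statement about prime cs polytopes) does not apply to it. Nor can Corollary \ref{inf-rig-subgraph} be invoked to force $g_2(\lk(u)\cap\lk(-u),\pi\circ\p)={d-1 \choose 2}-(d-1)$: that corollary needs an ambient cs infinitesimally rigid framework whose $g_2$ equals the cs minimum, whereas the projected framework available here, $(\lk(u)\cup\lk(-u),\pi\circ\p)$, is only known to satisfy the inequality $g_2\geq{d-1 \choose 2}-(d-1)$ (this is exactly Eq.~\eqref{g2-of-Lambda} in the proof of Lemma \ref{lemma: not-d-antipodal-pairs}). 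So as written the proposal has a real gap at its central step.

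The paper avoids all of this: both parts of Theorem \ref{main-pre-thm} feed directly into a short double count. By part (2), every vertex other than $u,-u$ is adjacent to $u$ or to $-u$, and by part (1) exactly $2d-2$ of them are adjacent to both; hence $2\deg(u)=\deg(u)+\deg(-u)=(f_0(P)-2)+(2d-2)$. Summing over all vertices gives $4f_1(P)=f_0(P)\bigl(f_0(P)+2d-4\bigr)$, and substituting $f_1(P)=d\,f_0(P)-2d$ (which is just $g_2(P)={d \choose 2}-d$) yields $(f_0(P)-4)(f_0(P)-2d)=0$, so $f_0(P)=2d$ --- i.e.\ your $B=\emptyset$ --- after which your final paragraph finishes the argument. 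I recommend replacing your inductive sketch with this counting step; your alternative idea of exploiting a chordless $4$-cycle together with Corollary \ref{cor: symmetric-stresses} is sound in spirit, but that is how the paper proves Proposition \ref{prop: cs-rigid-is-whole-graph} (i.e., part (2) of Theorem \ref{main-pre-thm} itself), not how one gets from Theorem \ref{main-pre-thm} to $f_0(P)=2d$.
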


\begin{proof}
Let $u$ be a vertex of $P$. Let $\deg(u)$ denote the degree of $u$ in the graph of $P$. Then
\begin{eqnarray}
2  \deg(u) &=& \deg(u) + \deg(-u) \nonumber \\
&=& (f_0(P)-2) + f_0(\lk(u) \cap \lk(-u)) \nonumber \\
&=& f_0(P)-2 + 2d-2 = f_0(P) + 2d-4. \label{eq: deg-ineq}
\end{eqnarray}
Here, the second line follows from Theorem \ref{main-pre-thm}(2) which implies that every vertex in $V(P) \setminus \{u,-u\}$ is adjacent to either $u$ or $-u$; furthermore, the vertices in $\lk(u) \cap \lk(-u)$ (i.e., the common neighbors of $u$ and $-u$) are counted twice.  The third line follows from Theorem~\ref{main-pre-thm}(1).

Summing Eq. \eqref{eq: deg-ineq} over all vertices yields
\begin{equation} \label{handshake}
4f_1(P) = \sum_{u \in V(P)} 2 \deg(u) = f_0(P)\cdot (f_0(P) + 2d-4).
\end{equation}
The fact that $g_2(P) = {d \choose 2} - d$ implies that $f_1(P) = d \cdot f_0(P) - 2d$.  Substituting this into Eq.~\eqref{handshake} we conclude that
\begin{eqnarray*}
4d \cdot f_0(P) - 8d &=& \left(f_0(P)\right)^2 + (2d-4)f_0(P), \\
\text{ or equivalently, \qquad } 0 &=& (f_0(P)-4)(f_0(P)-2d).
\end{eqnarray*}
Thus $f_0(P) = 2d$.  The result follows from the fact that the $d$-dimensional cross-polytope is the {\em only} cs $d$-polytope with exactly $2d$ vertices.
\end{proof}

%%%%%%%%%%%%%%%%%%%%%%%
%%%%%%%%%%%%%%%%%%%%%%%
\section{Finding symmetric subgraphs in $G(P)$}  \label{section:5}
%%%%%%%%%%%%%%%%%%%%%%%
%%%%%%%%%%%%%%%%%%%%%%%

 Let $P\subset \R^d$ be a cs simplicial $d$-polytope with $g_2(P) = {d \choose 2}-d$. Without loss of generality (we may perturb the vertices of $P$  without changing the symmetry or combinatorial type of $P$), we assume for the rest of the paper that {\bf every $d$ vertices of $P$, no two of which are antipodal, are affinely independent}, and that $\p : V(\partial P)\to \R^d$ is given by the vertex coordinates of $P$.

In this section we use Lemma \ref{lemma: g2-lower-bound} and Corollary \ref{cor: symmetric-stresses} to restrict the structure of missing faces in $P$ and its face links.
The next result uses the symmetry of stresses on $P$ to show that a missing face in $P$ gives rise to many actual faces.

\begin{lemma} \label{lemma: missing-face-graph-lemma}
Let $P$ be a cs simplicial $d$-polytope with $g_2(P) = {d \choose 2}-d$ and $d \geq 4$.  If $\tau$ is a missing face in $\partial P$ and $3 \leq |\tau| \leq d-1$, then $\left(\tau \setminus e \right) \cup -e$ is a face of $\partial P$ for any edge $e\subset \tau$.  In particular, $G(P)$ contains the graph of the cross-polytope on vertex set $\tau \cup -\tau$.
\end{lemma}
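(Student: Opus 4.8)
The plan is to combine Lemma~\ref{lemma: missing-face-stress}(2) (which produces a stress supported on an edge lying in a missing face) with Corollary~\ref{cor: symmetric-stresses} (which forces every stress on $(P,\p)$ to be symmetric), and then to argue that a symmetric stress which is nonzero on an edge $e$ cannot ``see'' $e$ unless the antipodal edge $-e$ is present in the relevant star. Concretely, fix a missing face $\tau$ with $3\le|\tau|\le d-1$ and an edge $e=\{x,y\}\subset\tau$, and set $\tau'=\tau\setminus e$, so $1\le|\tau'|\le d-3$. By Lemma~\ref{lemma: missing-face-stress}(2) there is a stress $\omega$ on $(\st(\tau')\cup\{e\},\p)$ that is nonzero on $e$. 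I want to view $\omega$ as (the restriction of) a stress on all of $(P,\p)$: since $(\st(\tau'),\p)$ is infinitesimally rigid by Lemma~\ref{lemma: star-of-face-rigid} and $(P,\p)$ is infinitesimally rigid by Whiteley's theorem and contains $d$ affinely independent vertices in common with $\st(\tau')\cup\{e\}$, I can extend $\omega$ by zero outside $\st(\tau')\cup\{e\}$ to a genuine stress $\tilde\omega$ on the edge set $E(P)\cup\{e\}$; more carefully, one uses that $\mathrm{rank}\,\Rig(P,\p)=\mathrm{rank}\,\Rig(\st(\tau')\cup\{e\},\p)$ forces the $e$-row to be a linear combination of the rows of $\Rig(P,\p)$, which gives a stress on $(G(P)\cup\{e\},\p)$ nonzero on $e$.

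Now comes the symmetry step, which I expect to be the main obstacle. The graph $G(P)\cup\{e\}$ is cs if and only if $e$ is not already an edge of $P$ — which is the case, since $e\subset\tau$ and $\tau$ is a missing face, its edges need not fail to be edges of $P$... actually here is the subtlety: the edges of a missing face $\tau$ \emph{are} faces of $P$ (that's part of the definition of missing face — every proper subset is a face). So $e\in E(P)$ already, and there is no enlargement of the graph. Instead, the right reading of Lemma~\ref{lemma: missing-face-stress}(2) and its proof is: because $\tau$ is missing, the face $\tau$ itself is not in $\st(\tau')$, and the stress lives on $\st(\tau')\cup\{e\}$ where $e$ plays the role of an ``extra'' chord — but $e$ is literally already there. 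Let me reorganize: the point of Lemma~\ref{lemma: missing-face-stress}(2) is that among the stresses of $(P,\p)$ there is one, call it $\omega$, supported inside $\st(\tau')\cup\{e\}$ and nonzero on $e$. By Corollary~\ref{cor: symmetric-stresses}, $\omega$ is symmetric, so $\omega_{-f}=\omega_f$ for every edge $f$; in particular $\omega_{-e}=\omega_e\ne 0$, so $-e$ is an edge of $P$ carrying nonzero weight, and moreover every edge $f$ with $\omega_f\ne 0$ has its antipode $-f$ an edge of $P$ as well. The stress equation at the vertex $x$ (an endpoint of $e$) then reads $\sum_{f\ni x}\omega_f(\p(x)-\p(f\setminus x))=\0$; since $\omega$ is supported on $\st(\tau')\cup\{e\}$, the neighbors $w$ of $x$ with $\omega_{\{x,w\}}\ne0$ all lie in $V(\st(\tau'))\cup\{y\}$. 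Applying the symmetric-stress property at $-x$ and comparing, or directly chasing which edges $(\tau'\setminus\{\cdot\})\cup(-e)$ must exist, will force $(\tau'\cup -e)$-type sets to be faces.

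I think the cleanest route is: use Lemma~\ref{lemma: missing-face-stress}(2) to get $\omega$ nonzero on $e=\{x,y\}$ and supported on $\st(\tau')\cup\{e\}$; by symmetry $\omega_{-e}\ne0$, so $-e$ is an edge of $P$; now $-e$ lies in the support of $\omega$, hence $-e\in E(\st(\tau')\cup\{e\})$, which (since $-e\ne e$) means both endpoints $-x,-y$ of $-e$ are vertices of $\st(\tau')$, i.e. $-x$ and $-y$ are each joined to every vertex of $\tau'$ — but that is exactly the assertion that $\tau'\cup\{-x\}$ and $\tau'\cup\{-y\}$, and inductively $(\tau\setminus e)\cup\{-x,-y\}=(\tau\setminus e)\cup(-e)$... wait, one needs the \emph{full} set $(\tau'\cup -e)$ to be a face, not just its edges. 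To upgrade from ``all edges present'' to ``face'', I would induct on $|\tau|$: if $(\tau\setminus e)\cup -e$ were itself a missing face, apply the lemma to it with a different edge and iterate; since $P$ is a polytope its missing faces have size $\ge 3$ and the argument produces, after finitely many steps, genuine faces, and a bookkeeping argument over all edges $e\subset\tau$ shows every ``mixed'' set $\sigma\cup(-\rho)$ with $\sigma\sqcup\rho=\tau$, $\rho$ an edge, is a face — equivalently $G(P)$ contains the graph of the cross-polytope on $\tau\cup -\tau$ (the cross-polytope graph on a $2k$-set being the complete graph minus the perfect matching $\{v,-v\}$, and indeed no edge $\{v,-v\}$ with $v\in\tau$ need appear). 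The \textbf{main obstacle} is exactly this last upgrade — controlling the support of $\omega$ tightly enough (via infinitesimal rigidity of $\st(\tau')$ and the ``no missing facet / size $\ge3$'' structure of polytope missing faces) to conclude the edge $-e$, and more generally the antipode of every support-edge, sits inside $\st(\tau')$, and then propagating this to get \emph{faces} rather than just edges; I would handle it by a downward induction on $d-|\tau|$ together with repeated application of Lemma~\ref{lemma: missing-face-stress}(2) to the newly-created missing faces.
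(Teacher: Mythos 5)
Your overall strategy is exactly the paper's: take $\tau'=\tau\setminus e$, invoke Lemma~\ref{lemma: missing-face-stress}(2) to get a stress $\omega$ on $(\st(\tau')\cup\{e\},\p)$ with $\omega_e\neq 0$, extend it by zero to a stress on $(P,\p)$ (this is immediate since $e$ is already an edge of $P$ and extension by zero of a stress on a subframework is a stress; the rank argument in your first paragraph is unnecessary), and use Corollary~\ref{cor: symmetric-stresses} to force $\omega_{-e}=\omega_e\neq 0$, hence $-e$ lies in the support of the extended stress, which is contained in $G(\st(\tau'))\cup\{e\}$, and since $-e\neq e$ you get $-e\in\st(\tau')$. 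Up to this point you match the paper.

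The gap is in your final step, and it is a genuine one, though the fix is one line. You weaken ``$-e\in\st(\tau')$'' to ``$-x$ and $-y$ are vertices of $\st(\tau')$, so $\tau'\cup\{-x\}$ and $\tau'\cup\{-y\}$ are faces,'' and then try to upgrade from edges to a face by an induction that applies the lemma to ``newly created missing faces.'' That induction cannot work as stated: applying the lemma to a missing face $\sigma$ only produces statements about the \emph{other} sets $(\sigma\setminus f)\cup -f$, never that $\sigma$ itself is a face (it is not --- it is missing), so the iteration has no mechanism to terminate in the claim that $(\tau\setminus e)\cup -e\in\partial P$. What you are missing is simply the definition of the star: $\st_{\partial P}(\tau')=\{\sigma\in\partial P:\sigma\cup\tau'\in\partial P\}$, so the fact that $-e$ is a face (an edge) of the complex $\st(\tau')$ --- which is exactly what the support condition gives, since the extended stress vanishes on every edge of $P$ outside $G(\st(\tau'))\cup\{e\}$ --- immediately yields $\tau'\cup -e=(\tau\setminus e)\cup -e\in\partial P$. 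No induction on $|\tau|$ or on missing faces is needed; with that observation your argument closes, and the ``in particular'' statement about the cross-polytope graph on $\tau\cup-\tau$ follows by varying $e$ over the edges of $\tau$ (for distinct $x,y\in\tau$ pick $e\subset\tau$ with $y\in e$ and $x\notin e$, possible since $|\tau|\geq 3$), together with the fact that all proper subsets of $\tau$, and by symmetry of $-\tau$, are faces.
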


\begin{proof}
Consider the edge $e\subset \tau$ and let $\tau' = \tau \setminus e$.  By Lemma \ref{lemma: missing-face-stress}, there is a stress $\omega$ on $(\st(\tau')\cup\{e\}, \p)$ such that $\omega_e\neq 0$. We can extend $\omega$ to a stress on $(P, \p)$ by assigning zero values to the edges of $P$ that are not in $\st(\tau')\cup\{e\}$. Since all stresses on $P$ are symmetric by Corollary \ref{cor: symmetric-stresses}, we must have $\omega_{-e}=\omega_e\neq 0$, and hence $-e \in \st(\tau')$.  Thus $\tau' \cup -e = (\tau \setminus e) \cup -e$ is a face of $\partial P$, as desired.
\end{proof}

\begin{lemma} \label{lemma: not-d-antipodal-pairs}
Let $P$ be a cs simplicial $d$-polytope with $d \geq 4$.  If there exists a vertex $u$ of $P$ such that $\st(u)$ and $\st(-u)$ share at least $d$ pairs of antipodal vertices, then $g_2(P) > {d \choose 2}-d$.
\end{lemma}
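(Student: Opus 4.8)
The plan is to show that if $\st(u)$ and $\st(-u)$ share $d$ (or more) antipodal pairs of vertices, then the cs subframework we can build on these shared vertices forces extra (asymmetric) stresses, contradicting Corollary~\ref{cor: symmetric-stresses}. Concretely, suppose $w_1, -w_1, \ldots, w_d, -w_d$ are vertices lying in both $\lk(u)$ and $\lk(-u)$. First I would record the easy consequence that these $2d$ vertices, being $d$ antipodal pairs that are affinely independent in our chosen general position, affinely span $\R^d$ (no hyperplane through the origin can contain a full antipodal pair unless... — more precisely, $d$ points no two of which are antipodal are affinely independent by our standing assumption, so $w_1,\dots,w_d$ already span an affine hyperplane, and adding $-w_1$ pushes the affine span to all of $\R^d$).

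Next, the heart of the argument: I would examine the subcomplex $\st(u) \cup \st(-u)$ and argue it contains the graph of a cross-polytope on $\{w_i, -w_i\}$ together with the two "poles" $u$ and $-u$, or at least enough edges to run a rigidity/stress count. The key tension to exploit is that $u$ is joined to all of $w_1,\dots,w_d,-w_1,\dots,-w_d$ (they are in $\lk(u)$), and likewise $-u$; meanwhile $\lk(u)$ is a simplicial $(d-2)$-sphere, so it is infinitesimally $(d-1)$-rigid, and $\st(u)$ is infinitesimally $d$-rigid by Lemma~\ref{lemma: star-of-face-rigid} (applied with $\tau=\{u\}$, valid since $d\geq 4$). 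Then $(\st(u)\cup\st(-u),\p)$ is a cs $d$-framework; if I can show it is infinitesimally $d$-rigid — using the Gluing Lemma, since $\st(u)$ and $\st(-u)$ now share the $\geq d$ affinely independent vertices $w_1,\dots,w_d$ — then Corollary~\ref{inf-rig-subgraph} would give $g_2(\st(u)\cup\st(-u)) = \binom{d}{2}-d$ and $\Stress(\st(u)\cup\st(-u),\p)=\Stress(P,\p)$.

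From there I would derive a contradiction by exhibiting an explicit stress, or by a dimension count, that is incompatible with the claimed equality. The cleanest route: count vertices and edges of $\st(u)\cup\st(-u)$. We have at least the $2d$ vertices $\{w_i,-w_i\}$ plus $u,-u$, so $f_0 \geq 2d+2$; but $\lk(u)$ might have more vertices. The edge count: every $w_i$ is adjacent to both $u$ and $-u$, giving $4d$ "spoke" edges, plus the edges inside $\lk(u)$ and inside $\lk(-u)$, plus $\{u,v\}$-edges for any additional vertices $v \in \lk(u)$. The point is that the double adjacency of each $w_i$ to both poles is exactly the kind of redundancy (a short chordless cycle $(u, w_i, -u, w_j)$, or more to the point a $4$-cycle $(u,w_i,-u,-w_i)$ if $-w_i$ is also present, which it is) that by Lemma~\ref{lemma: missing-face-stress}(1) supports a stress nonzero on a prescribed edge — and I would try to produce such a cycle whose stress cannot be symmetric, or simply show $g_2$ of this subframework strictly exceeds $\binom{d}{2}-d$ by a direct inequality (then invoke Corollary~\ref{inf-rig-subgraph}'s monotonicity $g_2(P)\geq g_2(\text{subframework})$ in reverse — wait, that inequality goes the right way only for rigid subframeworks).

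So the logically safe packaging is: (i) produce a cs, infinitesimally $d$-rigid subframework $(G',\p)$ of $(G(P),\p)$ on a vertex set containing the $d$ shared antipodal pairs; (ii) by Lemma~\ref{lemma: g2-lower-bound}, $g_2(G') \geq \binom{d}{2}-d$; (iii) by monotonicity $g_2(G(P)) \geq g_2(G')$; (iv) show $g_2(G') > \binom{d}{2}-d$ strictly, by exhibiting one more independent stress than the cross-polytope has — for instance the cross-polytope $\mathcal{C}^*_d$ itself on $2d$ vertices has $g_2 = \binom{d}{2}-d$, and our $G'$ has the two poles $u,-u$ each adjacent to all $2d$ equatorial vertices, which is $4d$ edges where the cross-polytope bipyramid structure would use only... — this surplus of edges over a minimally-rigid-on-those-vertices configuration is where the strict inequality comes from. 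I expect the main obstacle to be precisely step (iv): making the strict inequality $g_2(G') > \binom{d}{2}-d$ rigorous, since I must carefully identify a specific infinitesimally rigid $G'$, count its edges against $df_0(G') - \binom{d+1}{2} + \binom{d}{2} - d$, and verify the excess is positive — this requires controlling how the links of $u$ and $-u$ overlap beyond the $d$ shared antipodal pairs, which is exactly the combinatorial bookkeeping the rest of Section~\ref{section:5} is set up to handle.
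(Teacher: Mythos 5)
There is a genuine gap: your step (iv), the strict inequality, is exactly the content of the lemma, and you leave it open while acknowledging you do not see how to carry it out. The two avenues you sketch do not close it. The chordless $4$-cycle $(u,w_i,-u,-w_i)$ does produce, via Lemma~\ref{lemma: missing-face-stress}(1), a stress nonzero on $\{u,w_i\}$, but this yields no contradiction with Corollary~\ref{cor: symmetric-stresses}: the antipodal edge $\{-u,-w_i\}$ is also present in $\st(u)\cup\st(-u)$, so the stress can perfectly well be symmetric. And the direct edge count on $G'=G(\st(u)\cup\st(-u))$ cannot be completed as stated, because you have no control over how many vertices and internal edges $\lk(u)\cup\lk(-u)$ has; the "surplus of spoke edges" is not by itself comparable to $df_0(G')-{d+1 \choose 2}+{d \choose 2}-d$ without a lower bound on the edge count of the union of the two links.

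The missing idea in the paper's argument is a dimension-reduction step. Set $\Sigma=\st(u)\cup\st(-u)$ and $\Lambda=\lk(u)\cup\lk(-u)$, and project orthogonally onto the hyperplane $H$ through the origin with normal $\p(u)$. By the Cone Lemma~\ref{cone-lemma}, infinitesimal $d$-rigidity of $(\st(\pm u),\p)$ gives infinitesimal $(d-1)$-rigidity of $(\lk(\pm u),\pi\circ\p)$ in $H$; since $\lk(u)\cap\lk(-u)$ contains $d$ antipodal pairs, its projection affinely spans $H$, so the Gluing Lemma makes $(\Lambda,\pi\circ\p)$ a cs infinitesimally $(d-1)$-rigid framework, and Lemma~\ref{lemma: g2-lower-bound} applied \emph{in dimension $d-1$} gives $g_2(\Lambda,\pi\circ\p)\geq {d-1 \choose 2}-(d-1)$. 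The hypothesis on shared antipodal pairs then enters through the exact count $f_1(\Sigma)=f_1(\Lambda)+f_0(\Lambda)+f_0(\lk(u)\cap\lk(-u))\geq f_1(\Lambda)+f_0(\Lambda)+2d$ and $f_0(\Sigma)=f_0(\Lambda)+2$, which give $g_2(\Sigma,\p)\geq g_2(\Lambda,\pi\circ\p)+d\geq {d \choose 2}-d+2$; this contradicts the equality $g_2(\Sigma,\p)={d \choose 2}-d$ that your step (i) (correctly, as in the paper) extracts from Corollary~\ref{inf-rig-subgraph} under the assumption $g_2(P)={d \choose 2}-d$. So your setup through the rigidity of $\Sigma$ matches the paper, but without the projected framework $\Lambda$ and the cs lower bound one dimension down, the strict excess of $2$ never materializes.
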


\begin{proof}
We start by establishing some notation. Let $\Sigma \subseteq \partial P$ denote the $(d-1)$-dimensional subcomplex $\st(u) \cup \st(-u)$ and let $\Lambda \subseteq \partial P$ denote the $(d-2)$-dimensional subcomplex $\lk(u) \cup \lk(-u)$.  Let $H$ be the hyperplane through the origin in $\R^d$ whose normal vector is $\p(u)$, and let $\pi: \R^d \rightarrow H$ be the orthogonal projection of $\R^d$ onto $H$.
Perturbing $P$ slightly (without changing its symmetry or combinatorial type) we may also assume that
$\pi$ is injective on the framework $(\st(u), \p)$.

Now we begin the proof of the lemma.  Assume to the contrary that $g_2(P) = {d \choose 2}-d$. By Lemma \ref{lemma: star-of-face-rigid}, the frameworks $(\st(u), \p)$ and $(\st(-u),\p)$ are infinitesimally $d$-rigid. As they share $d$ affinely independent vertices, $(\Sigma,\p)$ is infinitesimally $d$-rigid by the Gluing Lemma.  Since $(\Sigma, \p)$ is also cs and since it is a subframework of $(P, \p)$, we conclude from Corollary \ref{inf-rig-subgraph} that {$g_2(\Sigma, \p) = {d \choose 2}-d$.}

Since $(\st(u), \p)=(u*\lk(u),\p)$ is infinitesimally $d$-rigid, it follows from the Cone Lemma that the framework $(\lk(u), \pi\circ\p)$ is infinitesimally $(d-1)$-rigid in $H$.   Similarly, since $\p(-u)=-\p(u)$ is also a normal vector to $H$, the framework $(\lk(-u), \pi\circ\p)$ is also infinitesimally $(d-1)$-rigid in $H$.  Further, since the vertices of $(\st(u)\cap\st(-u),\p)=(\lk(u)\cap\lk(-u), \p)$ affinely span $\R^d$, their projections span $H$. Hence $(\Lambda, \pi\circ\p)$ is infinitesimally $(d-1)$-rigid in $H$ by the Gluing Lemma. As $(\Lambda, \pi\circ\p)$ is also cs, Lemma \ref{lemma: g2-lower-bound} implies that
\begin{equation} \label{g2-of-Lambda}
 f_1(\Lambda) - (d-1)f_0(\Lambda) + {d \choose 2}=g_2(\Lambda,\pi \circ \p) \geq {d-1 \choose 2}-(d-1).
\end{equation}

Further,
\begin{eqnarray*}
f_1(\Sigma) &=& f_1(\Lambda) + f_0(\Lambda) + f_0(\lk(u) \cap \lk(-u)) \\
&\geq& f_1(\Lambda) + f_0(\Lambda) + 2d,
\end{eqnarray*}
since $\lk(u)$ and $\lk(-u)$ share at least $d$ pairs of antipodal vertices. Therefore,
\begin{eqnarray*}
g_2(\Sigma,\p) &=& f_1(\Sigma) - d \cdot f_0(\Sigma) + {d+1 \choose 2} \\
&\geq& \left(f_1(\Lambda) + f_0(\Lambda) + 2d\right)  - d \cdot (f_0(\Lambda) + 2) + {d+1 \choose 2} \\
&=& g_2(\Lambda,\pi \circ \p) + d \\
&\geq& {d-1 \choose 2} - (d-1) + d = {d \choose 2}-d+2.
\end{eqnarray*}
Here, the fourth line comes from Eq.~\eqref{g2-of-Lambda}. This contradicts our previous calculation showing {$g_2(\Sigma, \p) = {d \choose 2}-d$.}
\end{proof}

%%%%%%%%%%%%%%%%%%%%%%%
%%%%%%%%%%%%%%%%%%%%%%%
\section{More on missing faces in $P$}  \label{section:6}
%%%%%%%%%%%%%%%%%%%%%%%
%%%%%%%%%%%%%%%%%%%%%%%

%%%
\subsection{Swartz's operation and missing faces in vertex links}

In addition to our previous reduction to the case that $P$ is prime (see Section 4), in this subsection we will further show that if $P$ is prime with $g_2(P)={d \choose 2}-d$, then $\lk(u)$ is prime for every vertex $u$ in $P$.  This requires the following operation introduced by Swartz in \cite{Swartz-finiteness}.

Let $\Delta$ be a prime simplicial $(d-1)$-sphere and assume there exists a vertex $v_0 \in V(\Delta)$ such that $\lk_{\Delta}(v_0)$ is not prime. Then $\lk_{\Delta}(v_0)$ contains a missing $(d-2)$-face $\tau$.  Note that $\tau \notin \Delta$ --- if $\tau$ were in $\Delta$, then $\tau \cup v_0$ would be a missing facet of $\Delta$, contradicting the assumption that $\Delta$ is prime.  Thus $\lk_{\Delta}(v_0)$ can be decomposed as the connected sum of two simplicial spheres: $\lk_{\Delta}(v_0) = S_1 \#_{\partial\overline{\tau}}S_2$.  Form a new simplicial $(d-1)$-sphere $\Delta'$ as follows. First,  remove $v_0$ from $\Delta$ and introduce two new vertices $x$ and $y$; then add the face $\tau$ to $\Delta$, along with the subcomplexes $x*S_1$ and $y*S_2$.  In other words, replace the ball $\st_\Delta(v_0)=v_0 * (S_1 \#_{\partial \overline{\tau}}S_2)$ with the ball $(x*S_1) \cup (y*S_2)$.

Let us return to our cs simplicial $d$-polytope $P$ with $d \geq 4$.  Assume $P$ is prime but $\lk(v_0)$ has a missing facet $\tau$, and decompose $\lk(v_0)$ as $S_1 \#_{\partial\overline{\tau}}S_2$.  Then $\lk(-v_0)$ also has a missing facet $-\tau$, and so $\lk(-v_0) = (-S_1) \#(-S_2)$ (glued along the boundary of $-\tau$).  Let $\Gamma$ be the simplicial complex obtained from $\partial P$ by applying Swartz's operation first to $v_0$,  then to $-v_0$, and introducing four new vertices $x, y, -x$, and $-y$.  Further, modify $\p: V(P) \rightarrow \R^d$, the map given by the vertex coordinates of $P$, as $\widetilde\p: V(\Gamma) \rightarrow \R^d$ by defining $\widetilde\p(x) = \widetilde\p(y) = \p(v_0)$, $\widetilde\p(-x) = \widetilde\p(-y) = \p(-v_0)$, and otherwise $\widetilde\p(w) = \p(w)$.  Note that $\widetilde\p(-x) = -\widetilde\p(x)$ and $\widetilde\p(-y) = -\widetilde\p(y)$ since $\p(-v_0) = -\p(v_0)$, and hence $(\Gamma,\widetilde\p)$ is a cs framework. Our next objective will be to show that this framework is infinitesimally $d$-rigid.  We shall require the following lemmas.

\begin{lemma} \label{lem: connected-components}
Let $P$, $\Gamma$, and $\tau$ be as above. Then the graph $G(\Gamma) \setminus (V(\tau) \cup V(-\tau))$ has at most two connected components.
\end{lemma}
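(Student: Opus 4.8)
The plan is to reduce the statement to computing $\tilde H_0$ of the induced subcomplex $\Gamma[V(\Gamma)\setminus W]$, where $W:=V(\tau)\cup V(-\tau)$, and then to pin this down by Alexander duality in the sphere $\Gamma$, using the key fact that the relevant ``obstruction complex'' $\Gamma[W]$ sits inside a small cross-polytope boundary. Observe first that $G(\Gamma)\setminus W$ is exactly the $1$-skeleton of $\Gamma[V(\Gamma)\setminus W]$, so the number of connected components of $G(\Gamma)\setminus W$ equals $1+\dim_\field \tilde H_0(\Gamma[V(\Gamma)\setminus W];\field)$ for any field $\field$.

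Next I would record the structure of $\Gamma$: it is a cs simplicial $(d-1)$-sphere, and $\tau,-\tau$ are genuine faces of $\Gamma$ (they were filled in during Swartz's operation), each with $d-1$ vertices. Since a face of a cs simplicial complex can never contain an antipodal pair (such a face would be fixed by the free involution), $V(\tau)\cap V(-\tau)=\emptyset$, so $W$ is a $2(d-1)$-element set carrying the induced free involution, with exactly $d-1$ antipodal pairs. The crucial point is then that $\Gamma[W]\subseteq \partial\C^*_{d-1}$, the boundary complex of the $(d-1)$-dimensional cross-polytope on the vertex set $W$ with these $d-1$ diagonals: every face of $\Gamma[W]$ is an antipodal-pair-free subset of $W$, and by pigeonhole any such subset has at most $d-1$ elements, hence is a face of $\partial\C^*_{d-1}$.

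Since $\Gamma[W]$ is therefore at most $(d-2)$-dimensional and contained in the $(d-2)$-sphere $\partial\C^*_{d-1}$, its space of top cycles $Z_{d-2}(\Gamma[W];\field)$ is a subspace of $Z_{d-2}(\partial\C^*_{d-1};\field)$, which is one-dimensional (spanned by the fundamental class, whose support is the set of all $2^{d-1}$ facets). As $\Gamma[W]$ has no $(d-1)$-chains, $\tilde H_{d-2}(\Gamma[W];\field)=Z_{d-2}(\Gamma[W];\field)$ has dimension at most $1$. Now I would invoke Alexander duality: the induced subcomplex $\Gamma[V(\Gamma)\setminus W]$ is a deformation retract of $\|\Gamma\|\setminus\|\Gamma[W]\|=S^{d-1}\setminus\|\Gamma[W]\|$, so $\tilde H_0(\Gamma[V(\Gamma)\setminus W];\field)\cong \tilde H^{d-2}(\Gamma[W];\field)$, which over a field has the same dimension as $\tilde H_{d-2}(\Gamma[W];\field)$, namely at most $1$. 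Combining with the first paragraph yields that $G(\Gamma)\setminus W$ has at most two connected components; moreover, equality would force $Z_{d-2}(\Gamma[W])\neq 0$, hence $\Gamma[W]$ would contain every facet of $\partial\C^*_{d-1}$, i.e.\ $\Gamma[W]=\partial\C^*_{d-1}$ — which foreshadows the appearance of the cross-polytope in the main theorem.

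I do not expect a serious obstacle; the points needing care are the bookkeeping check that $\Gamma$ really is a cs simplicial $(d-1)$-sphere (so that the no-antipodal-pair property of its faces is available) and the standard topological input that a full subcomplex of a sphere is a deformation retract of the complement of its complementary full subcomplex, which is what makes the Alexander-duality step rigorous. A purely combinatorial alternative would be to argue directly that every vertex of $G(\Gamma)\setminus W$ is joined to one of the apices $x,\ y,\ -x,\ -y$ and that the components containing $x$ and $-x$ already exhaust the graph; I would avoid this route, since it seems to require separately ruling out isolated apices and tracking how the two ``halves'' of $\lk(v_0)$ left after deleting $\partial\overline{\tau}$ get reconnected by faces of $\partial P$ lying outside $\st(v_0)$.
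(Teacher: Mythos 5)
Your proof is correct and follows essentially the same route as the paper: restrict $\Gamma$ to $V(\tau)\cup V(-\tau)$, observe this induced subcomplex lies in $\partial\C^*_{d-1}$ so its $(d-2)$nd reduced Betti number is at most $1$, and transfer this to $\beta_0$ of the complementary induced subgraph via Alexander duality in the sphere $\Gamma$. You merely spell out details the paper leaves implicit (disjointness of $\tau$ and $-\tau$, the top-cycle bound, and the deformation-retract justification), all of which are fine.
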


\begin{proof}
Let $\beta_i(-)$ denote the $i$-th reduced Betti number.  Let $\Gamma'$ be the restriction of $\Gamma$ to the vertices in $\tau \cup -\tau$. Then $\Gamma'$ is a subcomplex of $\partial\C^*_{d-1}$, and so $\beta_{d-2}(\Gamma') \leq 1$.  Since $\Gamma$ is a $(d-1)$-sphere, the Alexander Duality Theorem \cite{Hatcher} implies that $\beta_0(\Gamma \setminus (V(\tau) \cup V(-\tau))) = \beta_{d-2}(\Gamma') \leq 1$, yielding the statement.
\end{proof}

\begin{lemma} \label{lem: cone-frameworks}
Let $P$, $v_0$, $\tau$, $S_1$, and $S_2$ be as above. Then both frameworks $(v_0*S_1,\p)$ and $(v_0*S_2,\p)$ are infinitesimally $d$-rigid.
\end{lemma}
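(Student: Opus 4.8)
The plan is to reduce the claim to Whiteley's theorem (Theorem~\ref{Whiteley-thm}) through the Cone Lemma (Lemma~\ref{cone-lemma}), in the spirit of the proof of Lemma~\ref{lemma: star-of-face-rigid}. Let $Q$ be the vertex figure of $v_0$ in $P$: it is a simplicial $(d-1)$-polytope with $\partial Q=\lk_P(v_0)$, and if $H$ is a hyperplane with $Q=P\cap H$ and $\pi\colon\R^d\to H$ is the central projection from $\p(v_0)$, then $\pi$ carries the framework $(\lk_P(v_0),\p)$ to the natural embedding $(\partial Q,\q)$ of $Q$ in $H$. The missing facet $\tau$ of $\lk_P(v_0)$ is thus a missing facet of $\partial Q$, and its $d-1$ vertices project to $d-1$ affinely independent points of $H$: the $d$ vertices $\{v_0\}\cup V(\tau)$ of $P$ are affinely independent by the standing general position assumption, using that $-v_0\notin V(\tau)$ and, since $d\geq 4$, that $V(\tau)$ contains no antipodal pair --- both because the central involution of a cs complex acts freely on its faces. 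Since moreover $\partial\overline{\tau}\subseteq\partial Q$, the affine hull of (the images of) the vertices of $\tau$ meets $Q$ exactly in the simplex $\conv(V(\tau))$, and so cutting $Q$ there decomposes it as $Q=Q_1\#Q_2$ with $Q_1,Q_2$ simplicial $(d-1)$-polytopes, $\partial Q_i=S_i$, and $\q$ restricted to $V(S_i)=V(Q_i)$ giving the natural embedding of $Q_i$. This is the polytopal realization of the combinatorial decomposition $\lk_P(v_0)=S_1\#_{\partial\overline{\tau}}S_2$.

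Granting this, the conclusion is immediate. The framework $(v_0*S_i,\p)=(v_0*\partial Q_i,\p)$ fits the hypotheses of the Cone Lemma with the central projection $\pi$ from $\p(v_0)$ --- in particular $\pi$ is injective on it, since distinct vertices of $\lk_P(v_0)$ project to distinct vertices of $Q$ --- so $(v_0*S_i,\p)$ is infinitesimally $d$-rigid if and only if $(S_i,\pi\circ\p)=(\partial Q_i,\q)$ is infinitesimally $(d-1)$-rigid in $H\cong\R^{d-1}$. As $Q_i$ is a simplicial $(d-1)$-polytope and $d-1\geq 3$, the latter holds by Theorem~\ref{Whiteley-thm}. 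Taking $i=1$ and $i=2$ proves the lemma; note that neither the primeness of $P$ nor the value of $g_2(P)$ enters the argument.

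The only step demanding genuine (if routine) care is the geometric cut in the first paragraph: checking that the affine hull of $V(\tau)$ meets $Q$ precisely in $\conv(V(\tau))$, so that no edge of $Q$ straddles it and the polytopal pieces $Q_1,Q_2$ carry exactly the combinatorial summands $S_1,S_2$ --- equivalently, that $G(S_i)=G(Q_i)$. This is the same operation of cutting a polytope along the affine span of a missing facet that was used in the proof of Lemma~\ref{no-missing-facets}, applied here to the vertex figure $Q$ rather than to $P$ itself. Once this identification is in hand, the remainder is a two-line invocation of the Cone Lemma and Whiteley's theorem.
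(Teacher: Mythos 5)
Your proof is correct, but it takes a genuinely different route from the paper's. The paper stays entirely inside stress-space arithmetic: since $(\st_P(v_0),\p)=(v_0*(S_1\#_{\partial\overline{\tau}}S_2),\p)$ is infinitesimally rigid by Lemma~\ref{lemma: star-of-face-rigid}, its stress space has dimension $g_2(v_0*S_1)+g_2(v_0*S_2)$; on the other hand $\Stress(v_0*S_1,\p)\oplus\Stress(v_0*S_2,\p)$ embeds into it (the intersection is the stress space of the $d$-vertex simplex $v_0*\overline{\tau}$, which is trivial by the genericity assumption), so each summand is forced to have dimension exactly $g_2(v_0*S_i)$, which by Lemma~\ref{basic-rig-prop} is equivalent to rigidity of each cone. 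You instead realize each $S_i$ geometrically as the boundary complex of a simplicial $(d-1)$-polytope $Q_i$ obtained by slicing the vertex figure $Q$ along the affine span of the (projected) missing facet $\tau$, and then conclude by Whiteley's theorem plus the Cone Lemma, exactly as in the proof of Lemma~\ref{lemma: star-of-face-rigid}. Your route has the appeal of deriving rigidity directly from Whiteley without needing the rigidity of the full star or the direct-sum dimension count, and, as you note, it makes transparent that neither primeness nor the value of $g_2(P)$ is used; its cost is the geometric verification that the hyperplane through $\pi(V(\tau))$ meets $Q$ precisely in $\conv(\pi(V(\tau)))$ (so no facet of $Q$ straddles the cut and $G(S_i)=G(Q_i)$) --- a routine convexity argument of the same kind the paper already invokes without proof when cutting along missing facets in Lemma~\ref{no-missing-facets}, and you correctly flag it and use the standing general-position assumption (where the paper instead uses that assumption to kill stresses on $v_0*\overline{\tau}$). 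So both arguments lean on the same standing hypotheses, just at different points, and both are sound.
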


\begin{proof}
Since, by Lemma \ref{lemma: star-of-face-rigid}, $(\st_{P}(v_0),\p)=(v_0 * (S_1 \#_{\partial\overline{\tau}}S_2),\p)$ is infinitesimally $d$-rigid, its stress space has dimension
$$
\dim \Stress(v_0 * (S_1 \#_{\partial\overline{\tau}}S_2),\p) = g_2(v_0 * (S_1 \#_{\partial\overline{\tau}}S_2))  = g_2(v_0*S_1) + g_2(v_0*S_2).
$$

On the other hand, we claim that $$\Stress(v_0*S_1,\p) \oplus \Stress(v_0*S_2,\p) \subseteq \Stress(v_0*(S_1 \# S_2),\p).$$ Indeed, any stress on $(v_0*S_i,\p)$ (for $i=1,2$) can be extended to a stress on $(v_0*(S_1 \# S_2),\p)$ by assigning a weight of 0 to any unused edge.  Further, $\Stress(v_0*S_1,\p) \cap \Stress(v_0*S_2,\p) = \Stress(v_0*(S_1 \cap S_2),\p) = \Stress(v_0*\overline{\tau},\p)$. Since $v_0*\overline{\tau}$ is a $(d-1)$-simplex, our genericity assumption on the vertices of $P$ implies that any stress on $(v_0*\overline{\tau},\p)$ is trivial. Therefore,
\begin{eqnarray*}
\dim \Stress(v_0*(S_1 \# S_2),\p) &\geq& \dim \Stress(v_0*S_1,\p) + \dim \Stress(v_0*S_2,\p) \\
&\geq& g_2(v_0*S_1) + g_2(v_0*S_2) \\
&=& g_2(v_0*(S_1 \# S_2)).
\end{eqnarray*}
Equality must hold throughout the above equation array, and $\dim \Stress(v_0*S_i,\p) = g_2(v_0*S_i)$ if and only if $(v_0*S_i,\p)$ is infinitesimally $d$-rigid.
\end{proof}

\begin{theorem} \label{thm: thmB}
Let $P$, $v_0$, $\tau$, $\Gamma$, $\widetilde\p$, $S_1$, and $S_2$ be as above.  The framework $(\Gamma,\widetilde\p)$ is infinitesimally $d$-rigid.
\end{theorem}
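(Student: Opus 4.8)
The plan is to exhibit $(\Gamma,\widetilde\p)$ as a union of infinitesimally $d$-rigid subframeworks and then assemble them with the Gluing Lemma (Lemma~\ref{gluing-lemma}), using Lemmas~\ref{lem: connected-components} and~\ref{lem: cone-frameworks} as the two key inputs.

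First I would record the rigid building blocks. The four ``cone'' frameworks $(x*S_1,\widetilde\p)$, $(y*S_2,\widetilde\p)$, $(-x*(-S_1),\widetilde\p)$ and $(-y*(-S_2),\widetilde\p)$ are each infinitesimally $d$-rigid: as frameworks they are isomorphic --- by relabeling apices and, for the last two, applying the antipodal isometry of $\R^d$ --- to $(v_0*S_1,\p)$ or $(v_0*S_2,\p)$, which are rigid by Lemma~\ref{lem: cone-frameworks}. Next, for every vertex $w$ of $\partial P$ with $w\notin\{v_0,-v_0\}$ and $w\notin V(\tau)\cup V(-\tau)$, I claim $(\st_\Gamma(w),\widetilde\p)$ is infinitesimally $d$-rigid: inside $\lk_{\partial P}(w)$ Swartz's operation only \emph{relabels} $v_0$ (to $x$ or $y$) and $-v_0$ (to $-x$ or $-y$) --- a genuine split occurs only in the links of vertices of $\tau$ and $-\tau$ --- so $(\st_\Gamma(w),\widetilde\p)$ is framework-isomorphic to $(\st_{\partial P}(w),\p)$, which is rigid by Lemma~\ref{lemma: star-of-face-rigid} (here $|\{w\}|=1\le d-3$ since $d\ge 4$). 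One also checks that under $\widetilde\p$ every facet of $\Gamma$ maps to $d$ affinely independent points (using the genericity assumption on the vertices of $P$ together with $\widetilde\p(x)=\p(v_0)=\widetilde\p(y)$, and likewise for $-x,-y$), so the star in $\Gamma$ of any edge contains $d$ affinely independent vertices.

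Then I would assemble. Every edge of $G(\Gamma)$ lies in one of the four cones or in $\st_\Gamma(w)$ for a vertex $w$ of the second type above; the only edges needing a separate look are those with both endpoints in $V(\tau)\cup V(-\tau)$, and each such edge either already lies in some $S_i$ or $-S_i$ (hence in a cone) or shares a facet of $\Gamma$ with one of the admissible $w$. So it suffices to glue the four cones and the relevant stars into one rigid framework. Here Lemma~\ref{lem: connected-components} enters: after deleting the $2(d-1)$ vertices of $V(\tau)\cup V(-\tau)$, the graph $G(\Gamma)$ has at most two components, and the free involution on $\Gamma$ preserves $V(\tau)\cup V(-\tau)$, hence permutes these components. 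Using this one joins the apices $x,y,-x,-y$ by paths of admissible vertices inside $G(\Gamma)\setminus(V(\tau)\cup V(-\tau))$ and glues the corresponding rigid stars onto the cones one at a time: two consecutive pieces in such a chain share the star of a common edge, which contains $d$ affinely independent vertices, so the Gluing Lemma applies at every step; iterating over a spanning tree of each component covers all of $G(\Gamma)$, and the resulting subframework of $(\Gamma,\widetilde\p)$ is infinitesimally $d$-rigid.

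The main obstacle is precisely the overlap between the cones $(x*S_1,\widetilde\p)$ and $(y*S_2,\widetilde\p)$: their common vertex set is only $V(\tau)$, which affinely spans a $(d-2)$-flat --- one dimension short of what the Gluing Lemma demands --- and this reflects an honest extra infinitesimal motion (a relative rotation of the two cones about the affine span of $\p(V(\tau))$) that the coincidence $\widetilde\p(x)=\widetilde\p(y)$ does not by itself suppress. Killing it is exactly what forces the detour through a path of rigid stars joining $x$ to $y$ in $G(\Gamma)\setminus(V(\tau)\cup V(-\tau))$, which is why Lemma~\ref{lem: connected-components} is indispensable. The genuinely delicate point is the case where that graph has \emph{two} components: there one must use the central symmetry to match the two components with the apices correctly, build a rigid subframework out of each half, and then glue the two halves along (a spanning subset of) $V(\tau)\cup V(-\tau)$, checking that this overlap still affinely spans a subspace of dimension at least $d-1$. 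I expect this two-component analysis, together with the edge-covering bookkeeping for edges inside $V(\tau)\cup V(-\tau)$, to be the most technical part of the argument.
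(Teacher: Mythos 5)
Your proposal is correct and follows essentially the same route as the paper's proof: the rigid building blocks are the cones over $S_1,S_2,-S_1,-S_2$ (Lemma~\ref{lem: cone-frameworks}) together with the stars $(\st_\Gamma(w),\widetilde\p)\cong(\st_P(w),\p)$ for vertices $w\notin V(\tau)\cup V(-\tau)\cup\{x,y,-x,-y\}$, glued vertex-by-vertex within each connected component of $\Gamma\setminus(\tau\cup-\tau)$ via shared facets, and the at-most-two components (Lemma~\ref{lem: connected-components}) are finally joined along $V(\tau)\cup V(-\tau)$, whose image affinely spans a hyperplane. Your extra worry about matching apices to components by symmetry is unnecessary (the paper simply glues all vertex stars within each component, wherever the apices happen to lie), but it does not affect correctness.
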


\begin{proof}
Let $w$ be a vertex of $\Gamma$ that does not belong to $\tau \cup -\tau \cup \{x,y,-x,-y\}$. If $w$ is a vertex of $S_1\cup S_2$ or $-S_1\cup -S_2$, then the definition of Swartz's operation implies that $v_0\in \lk_P(w)$ or $-v_0\in \lk_P(w)$ respectively. Since $\widetilde{\p}(x)=\widetilde{\p}(y)=\p(v_0)$ and  $\widetilde{\p}(-x)=\widetilde{\p}(-y)=\p(-v_0)$, the frameworks $(\st_\Gamma(w),\widetilde{\p})$ and $(\st_P(w),\p)$ are isomorphic. If instead $w$ is not a vertex of $S_1\cup S_2$ or $-S_1\cup -S_2$, then $(\st_\Gamma(w),\widetilde\p) = (\st_{P}(w),\p)$. Thus, in all cases, $(\st_\Gamma(w),\widetilde\p)$ is infinitesimally $d$-rigid by Lemma \ref{lemma: star-of-face-rigid}.  Similarly, the stars of $v_0$ and $-v_0$ in $\partial P$ are infinitesimally rigid  under the embedding $\p$, so Lemma \ref{lem: cone-frameworks} implies that the stars of $x, y, -x, $ and $-y$ in $\Gamma$ are infinitesimally rigid under $\widetilde\p$.

Next, let $K$ be a connected component of $\Gamma \setminus (\tau \cup -\tau)$, and order the vertices of $K$ as $v_1, \ldots, v_m$ so that for each $2 \leq k \leq m$, vertex $v_k$ has a neighbor $v_i$ with $i<k$ (ordering the vertices by breadth first search, for example, will accomplish this). This ensures that $\st_\Gamma(v_i)$ and $\st_\Gamma(v_k)$ intersect along a facet $\sigma$ of $\Gamma$ that contains the edge $\{v_i,v_k\}$.  As $\sigma$ is a facet of $\Gamma$, the vectors in $\widetilde\p(\sigma)$ are affinely independent. It follows by repeated applications of the Gluing Lemma that $(G_K,\widetilde\p)$ is infinitesimally $d$-rigid, where $G_K$ denotes the graph of $\bigcup_{w \in K} \st_\Gamma(w)$.

Finally, by Lemma \ref{lem: connected-components} we know $\Gamma \setminus (\tau \cup -\tau)$ has at most two connected components.  If it is connected, then the computation in the previous paragraph shows $(\Gamma,\widetilde{\p}) = (G_K, \widetilde{\p})$ is infinitesimally $d$-rigid.  Otherwise, suppose $\Gamma \setminus (\tau \cup -\tau)$ has two connected components $K$ and $K'$.  As in the proof of Lemma \ref{lem: connected-components}, let $\Gamma'$ denote the restriction of $\Gamma$ to the vertices in $\tau \cup -\tau$.  It follows from that proof that $\Gamma'$ is $\partial \C^*_{d-1}$ and it forms a separating codimension-$1$ sphere  in $\Gamma$.  Since $\widetilde\p(\tau) \cup \widetilde\p(-\tau)$ affinely span a $(d-1)$-dimensional space, $(\Gamma,\widetilde{\p}) = (G_K \cup G_{K'}, \widetilde{\p})$ is infinitesimally $d$-rigid by the Gluing Lemma.
\end{proof}

\begin{corollary} \label{cor: prime-links}
Let $P$ be a prime cs simplicial $d$-polytope with $d \geq 4$ and $g_2(P) = {d \choose 2}-d$, and let $u$ be a vertex of $P$.  Then $\lk(u)$ is prime.
\end{corollary}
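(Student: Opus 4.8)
The plan is to argue by contradiction, exploiting the machinery of Swartz's operation developed in this subsection together with the $g_2$ lower bound for cs frameworks. Suppose $P$ is prime with $g_2(P) = {d \choose 2} - d$, but $\lk(u)$ has a missing facet $\tau$ for some vertex $u = v_0$. Then, as noted, $\lk(-v_0)$ has the missing facet $-\tau$, and we may form the simplicial sphere $\Gamma$ and the cs embedding $\widetilde\p$ by applying Swartz's operation at $v_0$ and at $-v_0$, introducing the four new vertices $x, y, -x, -y$. By Theorem \ref{thm: thmB}, the framework $(\Gamma, \widetilde\p)$ is infinitesimally $d$-rigid, and by construction it is cs and affinely spans $\R^d$ (since $\partial P$ did). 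Hence Lemma \ref{lemma: g2-lower-bound} applies to give $g_2(\Gamma, \widetilde\p) \geq {d \choose 2} - d$.

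The heart of the argument is then a bookkeeping computation comparing $g_2(\Gamma,\widetilde\p)$ with $g_2(P)$. Swartz's operation replaces the single vertex $v_0$ by two vertices $x, y$ (and likewise $-v_0$ by $-x, -y$), so $f_0(\Gamma) = f_0(P) + 2$. For the edges: removing $v_0$ deletes its $\deg(v_0)$ incident edges, while adding $x * S_1$ and $y * S_2$ together with the face $\tau$ contributes $f_0(S_1) + f_0(S_2) + |\tau| = \deg(v_0) + |\tau|$ new edges (here $f_0(S_1) + f_0(S_2) = f_0(\lk(v_0)) + |\tau| = \deg(v_0) + (d-1)$, and the added face $\tau$ contributes ${d-1 \choose 2}$ edges among the vertices of $\tau$ that were not already present — these are genuinely new because $\tau \notin \Delta$). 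Performing the analogous count at $-v_0$ doubles the net change. One finds that $g_2(\Gamma,\widetilde\p) = g_2(P) + 2\big(|E(\overline\tau)|\big) - (\text{linear correction}) = {d\choose 2} - d - \text{(a strictly negative quantity)}$; I expect the clean statement to be $g_2(\Gamma,\widetilde\p) < g_2(P) = {d \choose 2} - d$, contradicting the inequality from the previous paragraph.

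The main obstacle is getting this edge count exactly right, and in particular verifying that the face $\tau$ really does add ${d-1 \choose 2}$ edges not already in $\partial P$. This is where primeness of $P$ is essential: the argument in the text shows $\tau \notin \partial P$, but one must check more carefully which \emph{edges} of $\overline\tau$ lie in $G(P)$. In fact it is cleaner to avoid counting the interior edges of $\tau$ at all, by instead comparing stress spaces directly: the stress space of $(\Gamma,\widetilde\p)$ contains (via extension by zero) a copy of the stress space of $(v_0 * S_1, \p) \oplus (v_0 * S_2,\p)$ plus the stresses supported away from $\st(v_0) \cup \st(-v_0)$, and one shows this forces $\dim\Stress(\Gamma,\widetilde\p)$ to be strictly smaller than $\dim\Stress(P,\p) = {d\choose 2} - d$, since splitting $v_0$ into $x,y$ "uses up" the rigidity that was coupling $S_1$ to $S_2$ through $v_0$. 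Either way, the contradiction with Lemma \ref{lemma: g2-lower-bound} completes the proof. A final small point to record: the new sphere $\Gamma$ really is cs with the evident free involution swapping $x \leftrightarrow -x$, $y \leftrightarrow -y$ and agreeing with the involution of $\partial P$ elsewhere, which is immediate from the symmetric way the operation was applied at $v_0$ and $-v_0$.
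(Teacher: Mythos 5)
Your overall strategy is the paper's: apply Swartz's operation at $v_0$ and $-v_0$, invoke Theorem \ref{thm: thmB} for infinitesimal rigidity of the cs framework $(\Gamma,\widetilde\p)$, and contradict Lemma \ref{lemma: g2-lower-bound} by showing $g_2(\Gamma)<{d \choose 2}-d$. But the key computation is where your argument breaks, and as written it breaks in the wrong direction. You assert that inserting the face $\tau$ contributes ${d-1 \choose 2}$ edges ``not already present --- these are genuinely new because $\tau\notin\Delta$.'' That is false: $\tau$ is a \emph{missing} face of $\lk(v_0)$, so every proper subset of $\tau$ --- in particular every edge of $\tau$ --- is already a face of $\lk(v_0)\subseteq\partial P$. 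The only fact that $\tau\notin\partial P$ gives you is that the $(d-2)$-face itself is absent, not its edges. With your inflated count, each application of the operation would change $g_2$ by $(d-1)+{d-1 \choose 2}-d={d-1 \choose 2}-1>0$, i.e.\ $g_2(\Gamma)>g_2(P)$, which destroys the contradiction; indeed your own displayed formula (``$g_2(\Gamma)=g_2(P)+2|E(\overline\tau)|-\dots$'') points the opposite way from the inequality $g_2(\Gamma)<g_2(P)$ that you then say you ``expect.'' The fallback sketch via stress spaces (``splitting $v_0$ into $x,y$ uses up the rigidity coupling $S_1$ to $S_2$'') is not carried out and does not by itself produce the needed strict inequality.

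The correct count is short: removing $v_0$ deletes $f_0(\lk(v_0))$ edges; coning $x$ over $S_1$ and $y$ over $S_2$ adds $f_0(S_1)+f_0(S_2)=f_0(\lk(v_0))+|\tau|$ edges (the shared vertices of $\tau$ are counted once in each $S_i$), and adding the face $\tau$ adds no edges at all. So each application of Swartz's operation increases $f_1$ by exactly $d-1$ and $f_0$ by exactly $1$, hence decreases $g_2$ by exactly $1$. Applying it at $v_0$ and at $-v_0$ gives $g_2(\Gamma)=g_2(P)-2<{d \choose 2}-d$, while Theorem \ref{thm: thmB} makes $(\Gamma,\widetilde\p)$ an infinitesimally rigid cs $d$-framework, contradicting Lemma \ref{lemma: g2-lower-bound}. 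With this repair your proof coincides with the paper's; without it, the step you yourself flag as ``the main obstacle'' is exactly the step that fails.
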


\begin{proof}
Assume to the contrary that $\lk(u)$ contains a missing $(d-2)$-face.  Each application of Swartz's operation increases the number of edges by $d-1$ and increases the number of vertices by one, and hence decreases $g_2$ by one.  Therefore, $g_2(\Gamma)  = g_2(P) - 2 < {d \choose 2}-d$.  However, by Theorem \ref{thm: thmB}, the cs framework $(\Gamma,\widetilde\p)$ is infinitesimally $d$-rigid.  This contradicts Lemma \ref{lemma: g2-lower-bound}.
\end{proof}

%%%
\subsection{General results on missing faces in $P$}

\begin{lemma} \label{lemma: no-missing-faces}
Let $P$ be a prime cs simplicial $d$-polytope with $d \geq 5$ and $g_2(P) = {d \choose 2}-d$.  There is no face $\tau$ of $\partial P$ with $|\tau| \leq d-2$ whose  link is the boundary of a simplex.
\end{lemma}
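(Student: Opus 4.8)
The plan is to argue by contradiction: suppose $\tau$ is a face of $\partial P$ with $|\tau| \le d-2$ whose link is the boundary of a simplex, say $\lk_P(\tau) = \partial\overline{\sigma}$ for some simplex $\sigma$ on a vertex set $W$ disjoint from $\tau$. The key structural observation is that such a $\tau$ forces $\tau \cup W$ to be a ``missing face-like'' configuration: every proper subset of $\tau \cup W$ that is not all of $\tau\cup W$ lies in $\partial P$, since $\st_P(\tau) = \tau * \partial\overline{\sigma}$, but $\tau \cup W$ itself cannot be a face of $\partial P$ (its link would be empty while $|\tau \cup W| = |\tau| + |W| \le (d-2) + (d-1) < $ is not immediately a contradiction — so more care is needed here; the right statement is that $\sigma \cup \tau$ would be a missing facet-or-larger configuration). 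More precisely, I would first reduce to producing a genuine missing face of $\partial P$ of size between $3$ and $d-1$ by an appropriate choice: if $|\tau| \ge 1$, pick any vertex $v\in\tau$ and any two non-adjacent... actually the cleanest route is to observe that $\tau\cup W$ contains a missing face of $\partial P$ of controlled size, since $\st_P(\tau)=\tau*\partial\overline\sigma$ shows $\tau\cup W\notin\partial P$ while all of its ``codimension-one pieces'' obtained by deleting a single vertex of $W$ are faces.

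Once a missing face $\rho$ with $3 \le |\rho| \le d-1$ is in hand inside $\tau \cup W$, I would invoke Lemma \ref{lemma: missing-face-graph-lemma}: for every edge $e \subset \rho$, the set $(\rho \setminus e)\cup -e$ is a face of $\partial P$, and moreover $G(P)$ contains the graph of the cross-polytope on $\rho \cup -\rho$. The point of introducing the antipodal vertices is that, combined with Corollary \ref{cor: prime-links} (every vertex link of $P$ is prime), we get a contradiction with the local structure near $\tau$: the link $\lk_P(\tau) = \partial\overline\sigma$ is a simplex boundary, so it has exactly $d-1$ vertices and is a stacked $(d-2)$-sphere; in particular $\tau$ sits inside $\st_P(\tau)=\tau*\partial\overline\sigma$, and the edges of $\partial P$ incident to $W$ are very constrained. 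The contradiction should come from counting: the cross-polytope graph on $\rho\cup -\rho$ forces antipodal vertices $-w$ for $w\in W$ to be present with many prescribed edges, but $\lk_P(\tau)$ being a simplex boundary leaves no room for the faces these edges would have to belong to — either directly, or by showing some vertex link fails to be prime, contradicting Corollary \ref{cor: prime-links}, or by showing $\st(u)\cup\st(-u)$ shares $\ge d$ antipodal pairs for a suitable $u$, contradicting Lemma \ref{lemma: not-d-antipodal-pairs}.

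I expect the main obstacle to be the case analysis on $|\tau|$, particularly the small cases $|\tau| = 0$ (i.e., $\tau = \emptyset$, so $\partial P$ itself is a simplex boundary — easily excluded since $P$ is prime and not a simplex, hence $f_0(P) > d+1$) and $|\tau| = 1$ (a vertex whose link is a simplex boundary). For $|\tau|=1$, say $\tau=\{v\}$, the link $\lk_P(v)=\partial\overline\sigma$ means $v$ together with $\sigma$'s vertex set forms a missing facet $\{v\}\cup W$ of size $d$ — but then $P$ is not prime, contradiction. So the genuinely new content is $2 \le |\tau| \le d-2$, where $\tau$ is a face, $\tau\cup W\notin\partial P$, yet $\tau\cup W$ is ``almost a face''. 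Here the delicate point is verifying that $\tau\cup W$ really contains a missing face of size $\ge 3$ and $\le d-1$ and then ensuring the cross-polytope graph produced by Lemma \ref{lemma: missing-face-graph-lemma}, together with primeness of links (Corollary \ref{cor: prime-links}) and the genericity assumption on the vertices of $P$, yields the contradiction; I would organize this by passing to the link $\lk_P(\tau)$ and arguing that $g_2$ of an appropriate rigid cs subframework built from $\st(w)\cup\st(-w)$ for $w\in W$ exceeds $\binom{d}{2}-d$, contradicting Corollary \ref{inf-rig-subgraph}.
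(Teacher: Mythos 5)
Your overall strategy is the same as the paper's: with $\lk(\tau)=\partial\overline{\sigma}$, locate a missing face inside $\tau\cup\sigma$, apply Lemma~\ref{lemma: missing-face-graph-lemma}, and aim for a contradiction with Lemma~\ref{lemma: not-d-antipodal-pairs} (or Corollary~\ref{cor: prime-links}). But the decisive step is left unresolved, and it is exactly where the real work lies. Since $\st(\tau)=\overline{\tau}*\partial\overline{\sigma}\subseteq\partial P$, every non-face contained in $\tau\cup\sigma$ must contain all of $\sigma$; and since $\tau\cup\sigma$ (which has $d+1$ vertices) can be neither a face nor a missing face (else $P$ would be a simplex), the missing face you obtain necessarily has the form $\tau_1\cup\sigma$ with $\tau_1\subsetneq\tau$ a \emph{proper} subset of $\tau$, with $3\leq|\tau_1\cup\sigma|\leq d-1$ by primeness. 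Lemma~\ref{lemma: missing-face-graph-lemma} applied to $\tau_1\cup\sigma$ makes a fixed $u\in\sigma$ adjacent to $-\tau_1$ and to $-(\sigma\setminus u)$, and the link structure makes $u$ adjacent to $\tau$ and $\sigma\setminus u$. This accounts for only $|\tau_1|+d-|\tau|$ common antipodal pairs in $\lk(u)\cap\lk(-u)$, which falls short of the $d$ pairs needed to invoke Lemma~\ref{lemma: not-d-antipodal-pairs} whenever $\tau_1\neq\tau$. The missing ingredient --- which your ``the contradiction should come from counting \dots either directly, or \dots'' does not identify, let alone supply --- is the adjacency of $u$ to $-v$ for every $v\in\tau\setminus\tau_1$. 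The paper needs a separate, more delicate stress argument for this: for such a $v$ one finds a missing face $\sigma'=\tau_2\cup\sigma$ of $\lk(v)$; primeness of $\lk(v)$ (Corollary~\ref{cor: prime-links}) gives $|\sigma'|\leq d-2$, so for an edge $e\subset\sigma$ through $u$ the star of $\sigma'\cup v\setminus e$ is infinitesimally rigid by Lemma~\ref{lemma: star-of-face-rigid}, Lemma~\ref{lemma: missing-face-stress}(2) produces a stress nonzero on $e$, and symmetry of all stresses (Corollary~\ref{cor: symmetric-stresses}) forces $-e$ to be an edge of $\st(v)$, i.e.\ $\{u,-v\}\in G(P)$. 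Without an argument of this kind the count does not close, so the proposal has a genuine gap rather than merely missing routine details.

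Two smaller points. First, in your $|\tau|=1$ discussion, $\{v\}\cup V(\sigma)$ has $d+1$ vertices, not $d$, so it is not a missing facet; the correct quick argument is that $V(\sigma)$ itself (of size $d$) is either a missing facet, contradicting primeness, or a facet, in which case $\partial P$ contains the $(d-1)$-sphere $\partial\overline{\{v\}\cup V(\sigma)}$ and hence $P$ is a simplex, which is not cs. In the paper's uniform treatment this is the case $\tau_1=\emptyset$, excluded by primeness. Second, your size bound $3\leq|\rho|\leq d-1$ for the missing face does hold, but the lower bound uses $|\sigma|=d+1-|\tau|\geq 3$ (i.e.\ the hypothesis $|\tau|\leq d-2$) and the upper bound uses primeness of $P$; these verifications should be made explicit since Lemma~\ref{lemma: missing-face-graph-lemma} is inapplicable outside that range.
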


\begin{proof}
Assume to the contrary that such a face $\tau$ exists and $\lk(\tau) = \partial \overline\sigma$.  Fix $u \in \sigma$.  We will show $u$ is adjacent to every vertex in the sets $\tau, -\tau, \sigma \setminus u$, and $-(\sigma \setminus u)$.  From this, it follows that $\st(u)$ and $\st(-u)$ share the vertices in $\tau\cup \sigma \setminus u$ and their antipodes.  But $|\tau\cup \sigma \setminus u| = d$, which contradicts Lemma \ref{lemma: not-d-antipodal-pairs}.

First note that $u$ is adjacent to every vertex in $\tau$ and every vertex in $\sigma \setminus u$ because $\lk(\tau) = \partial \overline{\sigma}$.

Observe that $\overline{\tau} *\partial\overline{\sigma} \subseteq \partial P$, but the set $\tau \cup \sigma$ has $d+1$ vertices and hence cannot be a face of $\partial P$ (its dimension is too large), nor can it be a missing face of $\partial P$ (otherwise, $P$ itself would be the $d$-simplex, which is not centrally symmetric).  Thus there exists a proper face $\tau_1 \subsetneq \tau$ such that $\tau_1 \cup \sigma$ is a missing face in $\partial P$. Note that $|\tau_1 \cup \sigma| \leq d-1$ since $ P$ is prime and $|\tau_1 \cup \sigma|  \geq |\sigma| = d+1-|\tau| \geq 3$. Hence we can apply Lemma \ref{lemma: missing-face-graph-lemma} to the missing face $\tau_1 \cup \sigma$, which implies that $u$ is adjacent to every vertex in $-\tau_1$ and every vertex in $-(\sigma \setminus u)$.

It remains to show $u$ is adjacent to every vertex in $-(\tau \setminus \tau_1)$. Fix a vertex $v \in \tau \setminus \tau_1$. Since $\partial \overline{\sigma} \subseteq \lk(v)$ and since $\tau_1 \cup \sigma$ is a missing face in $\partial P$, there exists $\tau_2\subseteq \tau_1$ (possibly empty) such that $\sigma':=\tau_2 \cup \sigma$ is a missing face in $\lk(v)$.

Let $e$ be any edge in $\sigma$ containing $u$.  Then $\sigma'$ is a missing face in $\lk(v)$, and hence also in $\st(v)$, containing $e$.  Since $\lk(v)$ is prime (Corollary \ref{cor: prime-links}), $|\sigma'| \leq d-2$.
Therefore, $\sigma'\cup v \setminus e$ is a face of $\partial P$ with $|\sigma' \cup v \setminus e| \leq d-3$. Thus,
$(\st_{\st(v)}(\sigma'\setminus e), \p)=(\st_{\partial P}(\sigma'\cup v \setminus e), \p)$ is infinitesimally rigid by Lemma \ref{lemma: star-of-face-rigid}. As $\sigma'$ is a missing face of $\st(v)$ and $|\sigma'|\geq |\sigma|\geq 3$, it follows from Lemma \ref{lemma: missing-face-stress}(2) that there is a stress on $(\st_{\st(v)}(\sigma'\setminus e)\cup\{e\},\p)$ (and hence on $(P, \p)$) that is non-zero on $e$.  Since all stresses on $(P, \p)$ are symmetric by Corollary \ref{cor: symmetric-stresses}, we conclude that this stress is also non-zero on $-e$. Thus, $-e$ must be  an edge of $\st(v)$, and so $\{-u,v\}$ is an edge of $P$.  By central symmetry, $\{u,-v\}$ is also an edge, that is, $u$ is adjacent to $-v$.  This completes the proof that $u$ is adjacent to every vertex in $-(\tau \setminus \tau_1)$.
\end{proof}

The following corollary is immediate.

\begin{corollary} \label{cor: links-not-stacked}
Let $P$ be a prime cs $d$-polytope with $d \geq 5$ and $g_2(P) = {d \choose 2}-d$.  If $\tau$ is a face of $\partial P$ with $|\tau| \leq d-3$, then $\lk(\tau)$ is not stacked.
\end{corollary}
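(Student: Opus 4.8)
The plan is to reduce this immediately to Lemma~\ref{lemma: no-missing-faces} by exhibiting, inside a hypothetically stacked link, a face whose link is the boundary of a simplex. Suppose for contradiction that $\tau$ is a face of $\partial P$ with $|\tau| \le d-3$ for which $\lk(\tau)$ is stacked, and set $e := d - |\tau|$, so that $e \ge 3$ and $\lk(\tau)$ is a stacked $(e-1)$-sphere.

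First I would dispose of the degenerate possibility that $\lk(\tau)$ is itself the boundary of a simplex: in that case $\tau$ is a face with $|\tau| \le d-3 \le d-2$ whose link is a simplex boundary, directly contradicting Lemma~\ref{lemma: no-missing-faces}. In the remaining case $\lk(\tau) = \partial(S_1 \# \cdots \# S_t)$ with $t \ge 2$ and each $S_j$ an $e$-simplex, and I would invoke the standard structural fact — read off from a leaf of the stacking tree — that such a sphere has a vertex $v$ whose link is a simplex boundary: if $S_i$ is a leaf, glued to the rest along a facet $F$, then the vertex $v$ of $S_i$ opposite $F$ is a vertex of $\lk(\tau)$ with $\lk_{\lk(\tau)}(v) = \partial\overline{F}$, the boundary of the $(e-1)$-simplex $F$.

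Then $\rho := \tau \cup \{v\}$ is a face of $\partial P$ with $\lk(\rho) = \lk_{\lk(\tau)}(v) = \partial\overline{F}$ and $|\rho| = |\tau| + 1 \le d-2$, again contradicting Lemma~\ref{lemma: no-missing-faces} and finishing the argument. I do not expect a genuine obstacle here; the only points requiring care are the index bookkeeping — it is precisely the hypothesis $|\tau| \le d-3$, rather than $|\tau| \le d-2$, that guarantees $|\rho| \le d-2$ — together with the elementary fact about stacked spheres recalled above and the identity $\lk_P(\tau \cup \{v\}) = \lk_{\lk_P(\tau)}(v)$.
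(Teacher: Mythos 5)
Your proposal is correct and follows essentially the same route as the paper: assume $\lk(\tau)$ is stacked, find a vertex $v$ of $\lk(\tau)$ whose link in $\lk(\tau)$ is a simplex boundary, and observe that $\lk_P(\tau\cup\{v\})=\lk_{\lk(\tau)}(v)$ with $|\tau\cup\{v\}|\le d-2$ contradicts Lemma~\ref{lemma: no-missing-faces}. The only difference is that you spell out the standard leaf-of-the-stacking-tree argument (and the trivial simplex-boundary case) that the paper leaves implicit.
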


\begin{proof}
Assume to the contrary that $\lk(\tau)$ is stacked for some face $\tau\in\partial P$ with $|\tau| \leq d-3$.  Then there exists a vertex $u \in \lk(\tau)$ such that $\lk_{\lk(\tau)}(u) = \lk_{P}(\tau \cup u)$ is the boundary of a simplex.  This contradicts Lemma \ref{lemma: no-missing-faces}.
\end{proof}

%%%%%%%%%%%%%%%%%%%%%%%
%%%%%%%%%%%%%%%%%%%%%%%
\section{Completing the proof of Theorem \ref{main-pre-thm}}  \label{section:7}
%%%%%%%%%%%%%%%%%%%%%%%
%%%%%%%%%%%%%%%%%%%%%%%

In this section we continue to restrict our attention to prime cs $d$-polytopes with $d\geq 5$ and $g_2={d \choose 2}-d$. The next proposition implies the following counterpart of Lemma \ref{lemma: not-d-antipodal-pairs}: the stars of any two antipodal vertices in such a polytope share at least $2(d-1)$ common vertices.

\begin{proposition} \label{prop: antipodal-pairs-in-links}
Let $P$ be a prime cs $d$-polytope with $d \geq 5$ and $g_2(P) = {d \choose 2}-d$.  If $k \geq 4$ and $\tau \in \partial P$ is a face of size $d-k$, then $\lk(\tau)$ contains at least $k$ pairs of antipodal vertices.
\end{proposition}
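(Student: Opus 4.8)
The plan is to induct on $k \geq 4$. Throughout, set $\ell := |\tau| = d-k$, so that $\lk(\tau)$ is the boundary complex of a $(k-1)$-polytope, and observe that since $|\tau| = d - k \leq d - 4$, Lemma~\ref{lemma: star-of-face-rigid} guarantees that $(\st(\tau),\p)$ is infinitesimally rigid. Moreover, by central symmetry $-\tau$ is also a face, and $\lk(-\tau) = -\lk(\tau)$, so a pair of antipodal vertices in $\lk(\tau)$ is the same thing as a vertex of $\st(\tau)$ whose antipode lies in $\st(-\tau)$. The base case $k=4$ is where the real work lies, and I expect it to be the main obstacle; the inductive step should be comparatively short.

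\textbf{The inductive step.} Assume the statement for $k-1 \geq 4$ and let $\tau$ have size $d-k$. Pick any vertex $w \in \lk(\tau)$; then $\tau \cup w$ has size $d-(k-1)$, so by the inductive hypothesis $\lk(\tau \cup w) = \lk_{\lk(\tau)}(w)$ contains at least $k-1$ antipodal pairs, all of which are vertices of $\lk(\tau)$. I would then need to produce just one more antipodal pair in $\lk(\tau)$, not already among those $k-1$ pairs: the natural candidates are $w$ itself together with $-w$. If $-w \in \lk(\tau)$ we are done (the pair $\{w,-w\}$ is new since $w \notin \lk_{\lk(\tau)}(w)$). If $-w \notin \lk(\tau)$, i.e. $\tau \cup (-w) \notin \partial P$, then — because every proper subset of $\tau \cup (-w)$ that omits $-w$ is a face, and $-\tau$ together with subsets of itself are all faces — one argues (using primeness of $P$ when $|\tau \cup (-w)| = d$, and the size bound $3 \leq |\tau_1 \cup (-w)|$ which holds once $k \geq 4$ since $|\tau|\geq 2$... one must be a little careful here when $|\tau|$ is small) that some $\tau_1 \subseteq \tau$ gives a missing face $\tau_1 \cup (-w)$ of $\partial P$ with $3 \leq |\tau_1 \cup (-w)| \leq d-1$, whence Lemma~\ref{lemma: missing-face-graph-lemma} forces $w = -(-w)$ to be adjacent to $-v$ for every $v \in \tau_1$, and in particular $G(P)$ contains the cross-polytope graph on $(\tau_1 \cup (-w)) \cup -(\tau_1 \cup (-w))$. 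Extracting a fresh antipodal pair of $\lk(\tau)$ from this will require combining it with Corollary~\ref{cor: prime-links} (so that $\lk(\tau)$ is prime and its own links are not too simple) and possibly Corollary~\ref{cor: links-not-stacked}.

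\textbf{The base case $k=4$.} Here $\tau$ has size $d-4$ and $\lk(\tau)$ is the boundary of a simplicial $4$-polytope $R$; I must show $R$ has at least $4$ antipodal pairs of vertices, i.e. at least $8$ vertices coming in antipodal pairs. Since $\lk(\tau)$ is prime (Corollary~\ref{cor: prime-links}) and not stacked (Corollary~\ref{cor: links-not-stacked}), $R$ is a prime, non-stacked simplicial $4$-polytope, so $g_2(R) \geq 1$; combined with the structure theory for $4$-polytopes with small $g_2$ (Theorem~5.4 of \cite{Zheng-g2}, already invoked in the excerpt) and with the fact that $\partial P$ has no missing facet and no face of size $\leq d-2$ whose link is $\partial\overline{\sigma}$ (Lemma~\ref{lemma: no-missing-faces}), one should be able to pin down enough of the combinatorics of $R$. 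The key mechanism throughout will again be: any missing face $\sigma$ of $\lk(\tau)$ with $3 \leq |\sigma|$, via Lemma~\ref{lemma: missing-face-stress}(2), Corollary~\ref{cor: symmetric-stresses} and the symmetry of all stresses, forces $-e \in \partial P$ for each edge $e \subset \sigma$, hence forces antipodes of vertices of $\lk(\tau)$ back into $\lk(\tau)$. I would run this argument on well-chosen missing faces inside $\lk(\tau)$ (which exist precisely because $R$ is not stacked, so it is not the boundary of a simplex and has chordless $4$-cycles or larger missing faces by Lemma~\ref{lemma: missing-face-stress}(1)), each time harvesting a new antipodal pair, and stop once four pairs are accumulated. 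The main obstacle is bookkeeping: ensuring that the pairs produced in successive applications are genuinely distinct, and handling the low-dimensional degenerate configurations of $R$ (e.g. when $R$ is itself combinatorially close to $\C^*_4$) without circularity. If a direct count proves awkward, an alternative is to argue by contradiction: if $\lk(\tau)$ had at most $3$ antipodal pairs, then $\st(\tau) \cup \st(-\tau)$ would be "too thin," and one could run a $g_2$-estimate on $\st(\tau)\cup\st(-\tau)$ and on $\lk(\tau)\cup\lk(-\tau)$ exactly parallel to the proof of Lemma~\ref{lemma: not-d-antipodal-pairs} (projecting out the direction $\p(\tau)$, applying the Cone and Gluing Lemmas and Lemma~\ref{lemma: g2-lower-bound}), deriving $g_2(\Sigma,\p) > {d\choose 2}-d$ and contradicting Corollary~\ref{inf-rig-subgraph}; I would pursue this contradiction route if the constructive count becomes unwieldy.
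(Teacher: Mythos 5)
Your inductive step has a genuine gap. After applying the induction hypothesis to $\lk(\tau\cup w)$, you try to add the pair $\{w,-w\}$, and you concede that the case $-w\notin\lk(\tau)$ is unresolved: the route you sketch through a missing face $\tau_1\cup(-w)$ of $\partial P$ can fail outright (that missing face may be a single edge, so Lemma \ref{lemma: missing-face-graph-lemma} does not apply), and even when it applies, knowing that $w$ is adjacent to the vertices of $-\tau_1$ produces edges of $G(P)$, not a new antipodal pair of vertices lying in $\lk(\tau)$; no amount of primeness of links obviously bridges that. The paper's inductive step sidesteps this completely by never using $\{w,-w\}$: apply the hypothesis once to $\lk(\tau\cup w)$ to obtain some antipodal pair $\{u,-u\}$, which automatically lies in $\lk(\tau)$, and then apply the hypothesis a second time to $\lk(\tau\cup u)$, whose $k-1$ guaranteed pairs cannot involve $u$; together with $\{u,-u\}$ this gives $k$ pairs. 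This double use of the induction hypothesis is the missing idea.

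The base case $k=4$ is also not established. Your primary plan ("pin down enough of the combinatorics of $R$" via Zheng's classification) is a hope, and the mechanism you invoke is not justified as stated: a missing face of $\lk(\tau)$ need not be a missing face of $\partial P$, the symmetric-stress argument forces $-e$ into a certain star rather than forcing antipodes of vertices of $\lk(\tau)$ back into $\lk(\tau)$, and Corollary \ref{cor: prime-links} is proved only for vertex links, so asserting that $\lk(\tau)$ is prime for $|\tau|=d-4$ is unsupported (and unnecessary). Your fallback, modelled on Lemma \ref{lemma: not-d-antipodal-pairs}, points the inequality the wrong way: that lemma derives a contradiction from \emph{too many} shared antipodal pairs, since the extra common vertices inflate $f_1(\Sigma)$ and push $g_2(\Sigma)$ above ${d\choose 2}-d$; with at most $3$ pairs you get no such lower bound, and for $d\geq 7$ you cannot even invoke the Gluing Lemma, since $\st(\tau)$ and $\st(-\tau)$ would share fewer than $d$ affinely independent vertices. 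The paper's actual base case is short and different: $\lk(\tau)$ is not stacked (Corollary \ref{cor: links-not-stacked}), so $g_2(\st(\tau))>0$, and since $(\st(\tau),\p)$ is infinitesimally rigid it carries a nontrivial stress; by Corollary \ref{cor: symmetric-stresses} this stress is symmetric, hence supported on the edges of $\Lambda=\lk(\tau)\cap\lk(-\tau)$; if $\lk(\tau)$ had at most three antipodal pairs, $(\Lambda,\p)$ would be a subframework of the graph of a $3$-dimensional cross-polytope, which supports no nontrivial stress --- a contradiction. You did correctly identify that $\st(\tau)$ is rigid, that antipodal pairs in $\lk(\tau)$ correspond to vertices of $\lk(\tau)\cap\lk(-\tau)$, and that symmetry of stresses is the engine, but the argument that turns these into the bound is absent.
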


\begin{proof} We prove the claim by induction on $k$. When $k=4$, $\lk(\tau)$ is the boundary complex of a simplicial $4$-polytope, which, by Corollary \ref{cor: links-not-stacked}, is not stacked. Hence $g_2(\st(\tau))=g_2(\lk(\tau)) > 0$.  Since $(\st(\tau),\p)$ is infinitesimally rigid by Lemma \ref{lemma: star-of-face-rigid}, we infer from Lemma \ref{basic-rig-prop} that $(\st(\tau), \p)$ supports a nontrivial stress. By Corollary \ref{cor: symmetric-stresses}, this stress is symmetric, and hence attains non-zero values only on the edges of $\Lambda:=\lk(\tau) \cap \lk(-\tau)$.  If $\Lambda$ had only 3 pairs of antipodal vertices, say $v_i,-v_i$ for $i=1,2,3$, the framework $(\Lambda, \p)$ would be a subgraph of the graph of the $3$-dimensional cross-polytope $\conv\{\pm\p(v_1),\pm\p(v_2),\pm\p(v_3)\}$, and so it would not support any nontrivial stresses. A similar argument would apply if there were fewer than 3 pairs. Therefore, $\lk(\tau) \cap \lk(-\tau)$ contains 4 or more pairs of antipodal vertices, which establishes the base case.

Now suppose $k > 4$. Let $v$ be a vertex of $\lk(\tau)$.  Since $\lk(\tau) \supset \lk(\tau \cup v)$ and $|\tau \cup v| = d-(k-1)$, the inductive hypothesis implies that $\lk(\tau \cup v)$ contains at least $k-1$ pairs of antipodal vertices.  Let $\{u,-u\}$ be one such pair.  Applying the inductive hypothesis again to $\lk(\tau) \supset \lk(\tau \cup u)$ shows that $\lk(\tau \cup u)$ contains at least $k-1$ pairs of antipodal vertices.  This, together with $\{u,-u\}$ exhibits at least $k$ pairs of antipodal vertices in $\lk(\tau)$ and completes the proof.
\end{proof}

Proposition \ref{prop: antipodal-pairs-in-links} and Lemma \ref{lemma: not-d-antipodal-pairs} imply that $u$ and $-u$ have exactly $2d-2$ common neighbors for every vertex $u$ of $P$.  This proves the first part of Theorem \ref{main-pre-thm}.

\begin{corollary} \label{cor: star-union-rigid}
Let $P$ be a prime cs $d$-polytope with $d \geq 5$ and $g_2(P) = {d \choose 2}-d$. Then for every vertex $u$ of $P$, $(\st(u)\cup \st(-u), \p)$ is infinitesimally rigid.
\end{corollary}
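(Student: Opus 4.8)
The plan is to derive the infinitesimal rigidity of $(\st(u)\cup\st(-u),\p)$ from the Gluing Lemma (Lemma \ref{gluing-lemma}), applied to the two subframeworks $(\st(u),\p)$ and $(\st(-u),\p)$. Since $d\geq 5$, Lemma \ref{lemma: star-of-face-rigid} applied to the vertex $u$ (valid because $1\le d-3$) shows that both $(\st(u),\p)$ and $(\st(-u),\p)$ are infinitesimally $d$-rigid, so by the Gluing Lemma it remains only to check that their common subframework affinely spans a subspace of dimension at least $d-1$. Here I would first note that $u$ and $-u$ are never adjacent: the segment joining $\p(u)$ and $\p(-u)$ has the center $\0$ of $P$ as its midpoint, and $\0$ is an interior point of $P$, so this segment cannot be an edge. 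Consequently $V(\st(u))\cap V(\st(-u))=V(\lk(u))\cap V(\lk(-u))$, and the graph of $\st(u)\cap\st(-u)$ is the graph of $\lk(u)\cap\lk(-u)$.

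Next I would invoke Proposition \ref{prop: antipodal-pairs-in-links} together with Lemma \ref{lemma: not-d-antipodal-pairs}: they imply that $\lk(u)$ and $\lk(-u)$ share exactly $2d-2$ vertices, so these form exactly $d-1$ antipodal pairs $\{v_1,-v_1\},\dots,\{v_{d-1},-v_{d-1}\}$. Since $\p(-v_i)=-\p(v_i)$, the center $\0$ is the midpoint of $\p(v_i)$ and $\p(-v_i)$, hence $\0$ lies in the affine span of these $2d-2$ points; therefore that affine span coincides with the linear span $L:=\operatorname{span}_{\R}\{\p(v_1),\dots,\p(v_{d-1})\}$, which has dimension at most $d-1$. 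Thus the whole statement reduces to proving $\dim L=d-1$, i.e.\ that $\p(v_1),\dots,\p(v_{d-1})$ are linearly independent, equivalently that the center $\0$ lies in the affine hull of no $d-1$ pairwise non-antipodal vertices of $P$. I expect this to be the one genuinely delicate point. I would settle it by a genericity argument in the spirit of the standing assumption of Section \ref{section:5}: since we may perturb the vertices of $P$ without changing its central symmetry or combinatorial type, we may assume in addition that $\0$ lies in the affine hull of no $d-1$ pairwise non-antipodal vertices (a generic condition strengthening the assumption that any $d$ pairwise non-antipodal vertices of $P$ are affinely independent). Granting this, $L$ is a hyperplane, $(\st(u)\cap\st(-u),\p)$ affinely spans the $(d-1)$-dimensional subspace $L$, and the Gluing Lemma gives that $(\st(u)\cup\st(-u),\p)$ is infinitesimally rigid.

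The main obstacle, as indicated, is ruling out the degenerate possibility that the $2d-2$ shared vertices all lie in a subspace of dimension $d-2$, in which case the Gluing Lemma could not be applied directly; once the genericity reduction is in place, the rest is routine. One could instead try to reach the $(d-1)$-dimensional span more directly, for instance by exhibiting a facet of $\lk(u)$ all of whose vertices are shared with $\lk(-u)$ (such a facet, together with $u$, forms a facet of $P$ and hence affinely spans a hyperplane), but this is not obviously available, so ruling out the degenerate configuration by perturbation is the cleaner route.
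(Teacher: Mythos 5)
Your argument is correct and is essentially the paper's own proof: Proposition \ref{prop: antipodal-pairs-in-links} supplies the $d-1$ antipodal pairs of vertices shared by $\lk(u)$ and $\lk(-u)$, Lemma \ref{lemma: star-of-face-rigid} gives infinitesimal rigidity of the two vertex stars, and the Gluing Lemma finishes. The one point where you go beyond the paper --- strengthening the standing genericity assumption of Section \ref{section:5} by a further symmetric perturbation so that no $d-1$ pairwise non-antipodal vertices are linearly dependent, which is exactly what guarantees that the shared vertices affinely span a $(d-1)$-dimensional subspace --- is a legitimate and harmless refinement of a step the paper asserts in one sentence without spelling out this justification.
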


\begin{proof}
By Proposition \ref{prop: antipodal-pairs-in-links}, $(\lk(u), \p)$ contains at least $d-1$ pairs of antipodal vertices.  Therefore, $(\st(u)\cap\st(-u), \p)$ affinely spans a subspace of dimension at least $d-1$.  The vertex stars $(\st(u),\p)$ and  $(\st(-u),\p)$ are infinitesimally rigid by Lemma \ref{lemma: star-of-face-rigid}, and so $(\st(u) \cup \st(-u), \p)$ is infinitesimally rigid by the Gluing Lemma.
\end{proof}

\begin{proposition} \label{prop: cs-rigid-is-whole-graph}
Let $P$ be a prime cs $d$-polytope with $d \geq 5$ and $g_2(P) = {d \choose 2}-d$.  Let $G'$ be a subgraph of the graph of $P$ such that the framework $(G', \p)$ is cs infinitesimally rigid and affinely spans $\R^d$. Then $G'$ is the graph of $P$.
\end{proposition}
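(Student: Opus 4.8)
The plan is to promote the known equality of stress \emph{dimensions} to an equality of stress \emph{spaces}, and then to argue that no edge of $P$ can be absent from $G'$ because every edge of $P$ supports a nonzero stress of $(G(P),\p)$.

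First I would invoke Corollary~\ref{inf-rig-subgraph}: since $G'\subseteq G(P)$ and $(G',\p)$ is cs, infinitesimally $d$-rigid, and affinely spans $\R^d$, it gives $g_2(G')={d\choose 2}-d$ and $\Stress(G',\p)=\Stress(G(P),\p)$. Reading $\Stress(G',\p)$ as a subspace of $\Stress(G(P),\p)$ via extension by zero, this says exactly that every stress on $(G(P),\p)$ vanishes on every edge of $G(P)$ outside $G'$. Consequently it is enough to prove the claim that \emph{every edge of $P$ lies in the support of some stress on $(G(P),\p)$}: granting this, $E(G')=E(G(P))$, and since every vertex of $P$ is an endpoint of an edge, also $V(G')=V(P)$, so $G'=G(P)$.

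To prove the claim, fix an edge $e=\{a,b\}$ of $P$. Because $|e|=2\le d-2$, Lemma~\ref{lemma: no-missing-faces} shows the $(d-3)$-sphere $\lk_P(e)$ is not the boundary of a simplex (and, as $2\le d-3$ when $d\ge5$, Corollary~\ref{cor: links-not-stacked} shows it is not stacked), so $\lk_P(e)$ has a missing face $\mu$ with $2\le|\mu|\le d-2$. Using $\lk_P(e)=\lk_{\lk_P(a)}(b)=\lk_{\lk_P(b)}(a)$ and that all proper subsets of $\mu$ are faces of $\lk_P(e)$, I would determine the status of $\mu$, $\mu\cup\{a\}$, $\mu\cup\{b\}$, and $\mu\cup\{a,b\}$ with respect to $\partial P$. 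If $\mu\cup\{a\}$ and $\mu\cup\{b\}$ are both faces of $\partial P$, then $\mu\cup\{a,b\}$ is a missing face of $\partial P$ containing $e$, of size in $[4,d-1]$ (the upper bound because $P$ is prime); Lemma~\ref{lemma: missing-face-stress}(2), applied with $\tau=\mu\cup\{a,b\}$ and $\tau'=\mu$, then produces a stress on $(\st_P(\mu)\cup\{e\},\p)$ that is nonzero on $e$, and extending it by zero gives the required stress. Otherwise one of $\mu$, $\mu\cup\{a\}$, $\mu\cup\{b\}$ is a \emph{missing} face of $\partial P$; applying Lemma~\ref{lemma: missing-face-graph-lemma} to it (when its size is at least $3$) forces a copy of the cross-polytope graph on its vertices and their antipodes inside $G(P)$, and from those extra edges together with $e$ I would exhibit a chordless cycle of $G(P)$ through $e$, after which Lemma~\ref{lemma: missing-face-stress}(1) (and its noted extension to longer chordless cycles) supplies the stress.

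I expect the hard part to be this last case of the argument: pinning down precisely which of the sets assembled from $\mu$, $a$, and $b$ are faces of $\partial P$ versus missing faces --- leaning on primeness of $P$ and of its vertex links (Corollaries~\ref{cor: prime-links} and~\ref{cor: links-not-stacked}) --- and, in those configurations where no missing face of $\partial P$ that contains $e$ is available (for instance when $\mu$ itself is a missing edge of $\partial P$), extracting a chordless cycle of $G(P)$ through $e$ from the antipodal structure that Lemma~\ref{lemma: missing-face-graph-lemma} forces. Everything else is routine bookkeeping with Corollary~\ref{inf-rig-subgraph} and the results of Sections~\ref{section:6} and~\ref{section:7}.
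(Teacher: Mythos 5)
Your reduction via Corollary~\ref{inf-rig-subgraph} is fine, and your ``Case A'' (when $\mu\cup\{a\}$ and $\mu\cup\{b\}$ are both faces, so $\mu\cup\{a,b\}$ is a missing face of $\partial P$ containing $e$) is handled correctly. But the remaining case is a genuine gap, not routine bookkeeping, and you have not closed it. When the missing face of $\partial P$ you produce is one of $\mu$, $\mu\cup\{a\}$, $\mu\cup\{b\}$, it does \emph{not} contain $e$, so Lemma~\ref{lemma: missing-face-stress}(2) gives you nothing about $e$ directly; Lemma~\ref{lemma: missing-face-graph-lemma} is unavailable when $\mu$ is a missing edge of $\partial P$ (it needs size at least $3$); and even when it applies, the ``antipodal structure'' it forces does not obviously yield a chordless cycle through $e$: every vertex of $\lk_P(e)$ is adjacent to \emph{both} $a$ and $b$, so any short cycle through $e$ whose other vertices lie in $\lk_P(e)$ automatically has a chord, and you have no control over which vertices of $-\mu$ are adjacent to $a$ or $b$. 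In short, the claim you reduce to --- that \emph{every} edge of $P$ supports a nonzero stress --- is stronger than what is needed, and its proof is exactly the part you leave open.

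The paper avoids this entirely with a weaker claim plus a counting step: it first shows $V(G')=V(P)$ --- if a vertex $u$ were missing from $G'$, Proposition~\ref{prop: antipodal-pairs-in-links} gives an antipodal pair $v,-v$ in $\lk(u)$, so $(u,v,-u,-v)$ is a $4$-cycle in $G(P)$ that is chordless because $\{u,-u\}$ and $\{v,-v\}$ are antipodal pairs, and Lemma~\ref{lemma: missing-face-stress}(1) then produces a stress nonzero on $\{u,v\}\notin E(G')$, contradicting $\Stress(G',\p)=\Stress(P,\p)$. Once $f_0(G')=f_0(P)$, the equality $g_2(G')=g_2(P)$ forces $f_1(G')=f_1(P)$, hence $G'=G(P)$ without ever showing that every individual edge carries a stress. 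If you want to salvage your route, you would either need to prove your stronger claim (supplying the chordless cycles in the missing-edge case), or switch to this vertex-plus-edge-count argument.
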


\begin{proof}
Our assumptions on $G'$ along with Corollary \ref{inf-rig-subgraph} imply that $g_2(G') = {d \choose 2}-d$ and $\Stress(G', \p) = \Stress(P,\p)$.

We claim that $V(G')=V(P)$.  Suppose to the contrary that there exists a vertex $u$ of $P$ that does not belong to $V(G')$.  By Proposition \ref{prop: antipodal-pairs-in-links}, there is a pair of antipodal vertices $v, -v$ in $\lk(u)$.  This implies that the edges $\{u,v\}$ and $\{u,-v\}$ belong to $\partial P$ and hence by central symmetry, the $4$-cycle on vertices $u, v, -u, -v$ is a subgraph of $P$. Moreover, this cycle is induced since $\{u,-u\}$ and $\{v,-v\}$ are antipodal pairs and hence non-edges.  By Lemma \ref{lemma: missing-face-stress}, there is a stress on $(P, \p)$ that is nonzero on the edge $e = \{u,v\}$. However, as $e$ is an edge of $P$ but not $G'$, this means there exists a stress on $(P, \p)$ that does not belong to $\Stress(G', \p)$.  This is a contradiction.

Therefore, $G'$ and $P$ have the same number of vertices.  Since $g_2(G') = g_2(P)$, this also implies that $G'$ and $P$ have the same number of edges.  Hence $G' = G(P)$.
\end{proof}

Together, Corollary \ref{cor: star-union-rigid} and Proposition \ref{prop: cs-rigid-is-whole-graph} complete the proof of Theorem \ref{main-pre-thm}, and hence also of our main result, Theorem \ref{main-thm'}.

%%%%%%%%%%%%%%%%%%%%%%%
%%%%%%%%%%%%%%%%%%%%%%%

\section{Concluding remarks and open problems}
\subsection{Towards the Lower bound Theorem for cs simplicial spheres}
Many problems related to the Lower Bound Theorem for cs simplicial complexes remain wide open. For instance, we strongly suspect that Stanley's inequality on the $g_2$-number of cs polytopes and our characterization of the minimizers continue to hold in the generality of cs homology spheres or perhaps even cs normal pseudomanifolds:

\begin{conjecture} \label{g2_spheres}
Let $\Delta$ be a cs simplicial complex of dimension $d-1\geq 3$. Assume further that $\Delta$ is a homology sphere (or a connected homology manifold or even a normal pseudomanifold). Then $g_2(\Delta)\geq {d \choose 2}-d$. Furthermore, equality holds if and only if $\Delta$ is the boundary complex of a cs $d$-polytope obtained from the cross-polytope $\C^*_d$ by symmetric stacking.
\end{conjecture}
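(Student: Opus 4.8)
The plan is to imitate the proof of Theorem~\ref{main-thm} (equivalently Theorem~\ref{main-thm'}) step by step, replacing the only genuinely polytopal ingredient --- Whiteley's rigidity theorem (Theorem~\ref{Whiteley-thm}) and the natural embedding of $\partial P$ --- by the generic $d$-rigidity of the graph of a normal $(d-1)$-pseudomanifold established by Kalai, Fogelsanger, and Tay \cite{Kalai-rigidity, Fogelsanger, Tay}. Concretely, given a cs homology $(d-1)$-sphere (or cs connected homology manifold, or cs normal $(d-1)$-pseudomanifold) $\Delta$ with free involution $\alpha$, one fixes a set of orbit representatives of the vertices and chooses coordinates $\p\colon V(\Delta)\to\R^d$ that are generic subject to the constraint $\p(\alpha(v))=-\p(v)$; call such a $\p$ a \emph{cs-generic} embedding. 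The first task is to prove that $(G(\Delta),\p)$ is infinitesimally $d$-rigid for a cs-generic $\p$. Granting this, Lemma~\ref{lemma: g2-lower-bound} and Corollary~\ref{cor: symmetric-stresses} apply verbatim and already yield $g_2(\Delta)\ge\binom{d}{2}-d$ together with the crucial fact that, in the equality case, \emph{every} stress on $(G(\Delta),\p)$ is symmetric.

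For the characterization one then transports Sections~\ref{section:5}--\ref{section:7} to this setting. The point making this feasible is that links of faces in a homology sphere (resp.\ homology manifold, resp.\ normal pseudomanifold) are again homology spheres (resp.\ homology spheres, resp.\ normal pseudomanifolds) of lower dimension, so the inductive structure of Lemma~\ref{lemma: star-of-face-rigid}, Proposition~\ref{prop: antipodal-pairs-in-links}, and the Swartz-operation argument of Theorem~\ref{thm: thmB} survives: infinitesimal rigidity of stars follows from the Cone and Gluing Lemmas (Lemmas~\ref{cone-lemma} and~\ref{gluing-lemma}) together with generic rigidity of the lower-dimensional links, while Lemma~\ref{lemma: missing-face-stress}, Lemma~\ref{lemma: missing-face-graph-lemma}, Lemma~\ref{lemma: no-missing-faces}, and Corollary~\ref{cor: links-not-stacked} are purely rigidity-theoretic once the Lower Bound Theorem is invoked in the form valid for all normal pseudomanifolds. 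Running the final counting argument then forces $f_0(\Delta)=2d$, and one concludes by the (purely combinatorial) fact that $\partial\C^*_d$ is the only cs $(d-1)$-dimensional normal pseudomanifold with exactly $2d$ vertices; reattaching the symmetric-stacking pieces stripped off in the reduction of Lemma~\ref{no-missing-facets} recovers $\Delta$.

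Two places genuinely require new input rather than a routine transcription. The first is the reduction to the prime case: Lemma~\ref{no-missing-facets} cuts $\Delta$ along a pair of antipodal missing facets and applies the equality case of the classical LBT, and one must check that this operation stays within the chosen class (producing, on each side, a stacked ball glued to a cs object of the same type) --- unproblematic for homology spheres, but needing care, and perhaps an extra hypothesis, for general normal pseudomanifolds. The second is the base case $d=4$: the current proof that a prime cs simplicial $4$-polytope with $g_2=2$ is $\C^*_4$ leans on the polytope classification of \cite{Zheng-g2}, and one would need a combinatorial replacement valid for cs simplicial (or homology) $3$-spheres --- for instance, a direct analysis of $3$-spheres with $g_2=2$ together with Walkup's theorem \cite{Walkup} to rule out the stellar-subdivision case, following the polytopal proof but without invoking polytopality.

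The main obstacle, however, is the very first step: showing that a cs-generic embedding of a generically $d$-rigid cs graph is infinitesimally $d$-rigid. The set of cs embeddings is a proper linear subspace of the space of all $d$-embeddings, and genericity \emph{within} this subspace does not formally imply genericity in the ambient space, so infinitesimal rigidity could a priori fail on the entire symmetric stratum --- precisely the kind of symmetry-forced rigidity phenomenon studied in symmetry-adapted rigidity theory. One expects that the antipodal involution $-\mathrm{id}$ on $\R^d$ creates no such obstruction, but making this precise --- e.g.\ by an inductive cone argument that builds a cs-generic infinitesimally rigid embedding in dimension $d$ from one in dimension $d-1$, mirroring Whiteley's induction --- is the crux of extending Theorem~\ref{main-thm} beyond polytopes. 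Once this is in hand, the rest of Conjecture~\ref{g2_spheres} should follow the blueprint above; note that even the final step implicitly carries a realizability assertion (the combinatorial type forced is that of $\partial\C^*_d$ with symmetric stacking, which \emph{is} polytopal), so no separate realization argument is needed.
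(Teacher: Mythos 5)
The statement you are trying to prove is not a theorem of the paper but its Conjecture~\ref{g2_spheres}: the authors explicitly pose it as open, so there is no proof to compare against, and your proposal does not close it. Your own text concedes the decisive point: everything hinges on showing that a cs-generic embedding $\p$ (generic subject to $\p(-v)=-\p(v)$) of the graph of a cs homology sphere / normal pseudomanifold is infinitesimally $d$-rigid. That assertion is precisely Conjecture~\ref{inf_rig_cs_spheres} of the paper, which is itself open; reducing Conjecture~\ref{g2_spheres} (even just its inequality part) to Conjecture~\ref{inf_rig_cs_spheres} is already observed in the paper and is not a proof. Generic rigidity of the underlying graph in the sense of Kalai--Fogelsanger--Tay does not transfer, since the cs embeddings form a proper linear subspace of all embeddings and rigidity can in principle fail identically on such a symmetric stratum; you name this ``symmetry-forced'' obstruction but offer no argument to rule it out.

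Moreover, even granting cs-generic rigidity of the whole graph, the transcription of Sections~\ref{section:5}--\ref{section:7} is not routine in the way you suggest. The equality-case machinery repeatedly needs infinitesimal rigidity of $(\st(\tau),\p)$ and of projected links \emph{under the same embedding} $\p$ (Lemma~\ref{lemma: star-of-face-rigid}, used in Lemma~\ref{lemma: missing-face-stress}, Theorem~\ref{thm: thmB}, Lemma~\ref{lemma: not-d-antipodal-pairs}, Proposition~\ref{prop: antipodal-pairs-in-links}). In the polytopal case this comes from Whiteley's theorem via the Cone Lemma; in your setting the restriction of a cs-generic $\p$ to a vertex star is \emph{not} generic, because links contain antipodal vertex pairs subject to $\p(-v)=-\p(v)$ while the links themselves are not cs, so neither Fogelsanger-type generic rigidity nor Lemma~\ref{lemma: g2-lower-bound} applies to them directly --- this is exactly the difficulty the paper cites as the reason the standard Cone/Gluing induction breaks down. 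Together with the two further gaps you flag yourself (the prime reduction of Lemma~\ref{no-missing-facets} outside the sphere case, and a non-polytopal replacement for the $d=4$ base case resting on \cite{Zheng-g2}), what you have is a reasonable research program, not a proof: its crux is left as an unproved conjecture.
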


\begin{remark}
	The cs triangulation of $\mathbb{S}^i\times \mathbb{S}^{d-i-1}$ constructed in \cite{Klee-Novik} has $g_2=\binom{d+1}{2}$ for all $1\leq i\leq \frac{d-1}{2}$. However, there might be a better lower bound on $g_2$ for  connected cs homology manifolds that involves the first Betti number.
\end{remark}

Similarly to the classical non-cs case, by Lemma \ref{lemma: g2-lower-bound}, the following conjecture would imply the inequality part of Conjecture \ref{g2_spheres}.

\begin{conjecture} \label{inf_rig_cs_spheres}
Let $\Delta$ be a cs simplicial complex of dimension $d-1\geq 3$. Assume further that $\Delta$ is a homology sphere (a connected homology manifold or even a normal pseudomanifold). Then there exists a map $\p: V(\Delta)\to \R^d$ such that $(\Delta,\p)$ is a cs framework that is infinitesimally rigid in $\R^d$.
\end{conjecture}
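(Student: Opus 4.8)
Since homology spheres and connected homology manifolds are normal pseudomanifolds, it suffices to treat a cs normal pseudomanifold $\Delta$ of dimension $d-1\ge 3$; its fundamental cycle (over $\Z/2$) is then a cs \emph{minimal cycle} in Fogelsanger's sense. By Fogelsanger's theorem a \emph{generic} embedding $\p_0\colon V(\Delta)\to\R^d$ makes $(\Delta,\p_0)$ infinitesimally rigid, so the only difficulty is the symmetry constraint: a cs embedding is determined by the $f_0(\Delta)/2$ vectors assigned to a set of representatives of the antipodal pairs, and a generic $\p_0$ is never cs. The plan is therefore to prove a $\Z_2$-equivariant strengthening of Fogelsanger's theorem: every cs minimal $(d-1)$-cycle with $d\ge 4$ is infinitesimally rigid in $\R^d$ under any \emph{$\Z_2$-generic} cs embedding $\p$, i.e.\ one whose antipodal-pair representatives have algebraically independent coordinates over $\Q$.

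I would first dispose of the polytopal case: if $\Delta=\partial P$ for a cs simplicial $d$-polytope $P$, then the natural embedding is a cs embedding and is infinitesimally rigid by Theorem~\ref{Whiteley-thm}, and one may perturb within the family of cs polytopes to make it $\Z_2$-generic. Thus the genuine content of the conjecture lies entirely in the non-polytopal case (true homology spheres that are not polytopal, non-sphere homology manifolds, and cs pseudomanifolds).

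For the general case I would run Fogelsanger's induction on the number of vertices \emph{equivariantly}. The crux would be an equivariant cycle decomposition lemma: given a cs minimal $(d-1)$-cycle $z$ with sufficiently many vertices and a vertex $v$, write $z=\sum_i z_i$ with each $z_i$ a cycle on strictly fewer vertices, the collection $\{z_i\}$ invariant under the antipodal map (so each $z_i$ is itself cs or occurs together with its antipode), consecutive pieces overlapping in $d$ affinely independent points of $\p$, and --- ideally --- each non-cs piece having a vertex set that contains no antipodal pair. One then assembles $(\Delta,\p)$ via repeated use of the Gluing Lemma (Lemma~\ref{gluing-lemma}): cs pieces are rigid by the inductive hypothesis, non-cs pieces on antipode-free vertex sets are rigid by Fogelsanger's theorem (their restriction of $\p$ being genuinely generic), and antipodal pairs of pieces are glued simultaneously. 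The base cases --- cycles on few vertices --- would be handled directly, starting from the cross-polytope. A more representation-theoretic alternative is to block-diagonalize $\Rig(\Delta,\p)$ along the two irreducible representations of $\Z_2$ acting by $-I$ on $\R^d$ (the trivial one carrying the ${d \choose 2}$ infinitesimal rotations about the origin, the sign one carrying the $d$ translations) and to show that for $\Z_2$-generic $\p$ each block attains full rank, reducing the question to rigidity statements about the antipodal quotient combinatorics $\Delta/\Z_2$.

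The main obstacle, in either approach, is twofold. First, one cannot simply localize as in the present paper, because for a general cs sphere $\st(v)\cup\st(-v)$ need not be infinitesimally rigid --- $\lk(v)$ and $\lk(-v)$ may share too few vertices to apply the Gluing Lemma --- so a genuinely global equivariant decomposition is needed, and arranging the overlap and antipode-free conditions equivariantly at every inductive step is delicate. Second, and more seriously, one must control the symmetric and antisymmetric parts of the infinitesimal motion (equivalently, stress) space separately: as for other point groups, a symmetric framework can be \emph{forced} by its symmetry to carry extra motions or stresses, so one has to verify that for $\Z_2$-generic cs embeddings the ``rotational'' part of the kernel is exactly ${d \choose 2}$-dimensional rather than merely generically small. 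Ruling out such symmetry-forced degeneracies for all cs normal pseudomanifolds is where I expect the essential new work to lie.
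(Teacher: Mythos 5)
First, a point of calibration: the statement you are proving is stated in the paper as an open conjecture (Conjecture~\ref{inf_rig_cs_spheres}); the paper offers no proof of it, and indeed explicitly identifies the obstruction you also run into --- links of cs complexes are not cs, so the standard Cone/Gluing induction does not apply --- and then reduces the difficulty to a further open statement (Conjecture~\ref{partial-match}) about realizing $2$-spheres with prescribed antipodal pairs of vertices. So there is no ``paper proof'' to match; the only question is whether your argument is complete on its own. It is not. What you have written is a program, not a proof: the two ingredients that would carry all the weight --- (a) the equivariant cycle decomposition lemma (a $\Z_2$-invariant decomposition $z=\sum_i z_i$ with the required $d$ affinely independent overlap points and with non-cs pieces supported on antipode-free vertex sets), and (b) the claim that for ``$\Z_2$-generic'' cs embeddings no symmetry-forced stresses or motions appear, i.e.\ that the relevant blocks of the rigidity matrix attain full rank --- are stated as goals, and you yourself say that ruling out the degeneracies in (b) is ``where the essential new work'' lies. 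Neither is reduced to anything proved in the paper or to Fogelsanger's theorem as it stands; Fogelsanger's genericity is over \emph{all} coordinates, and a cs embedding is a positive-codimension, non-generic locus, so his conclusion cannot be quoted for it without exactly the new argument you defer.

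Two smaller issues would also need attention even within your outline. In the gluing step, rigidity of a non-cs piece $z_i$ requires the restriction of the cs embedding to $V(z_i)$ to be generic, but ``antipode-free vertex set'' only guarantees that the coordinates of those vertices are \emph{not forced} to be dependent; since the same representative coordinates are shared across many pieces and their antipodes, you must argue that a single choice of $\Z_2$-generic representatives is simultaneously generic for every piece arising anywhere in the induction --- plausible, but it needs a precise ``finitely many bad algebraic sets'' argument tied to a decomposition that you have not constructed. Second, the assertion that the fundamental $\Z/2$-cycle of a normal pseudomanifold is itself a minimal cycle is not quite right (it decomposes into minimal cycles, and one then needs connectedness/normality to glue), so even the non-equivariant baseline you start from requires a little more care than stated. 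None of this means the strategy is hopeless --- an equivariant Fogelsanger argument is a natural line of attack --- but as written the proposal does not establish the conjecture.
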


One of the reasons Conjecture \ref{inf_rig_cs_spheres} appears to be hard is that the links of cs complexes are usually not centrally symmetric, and so the standard inductive arguments with the Cone and Gluing Lemmas do not apply. However, if $\Delta$ is a cs complex, $\tau$ a face of $\Delta$, and $\lk_\Delta(\tau)$ contains two antipodal vertices of $\Delta$, then these two vertices do not form an edge in the link. This leads to the following conjecture on $2$-dimensional simplicial complexes that, if true, would imply Conjecture \ref{inf_rig_cs_spheres}.

\begin{conjecture} \label{partial-match}
Let $\Gamma$ be a 2-dimensional simplicial sphere (or even a simplicial manifold), and let $\{\{u_1,w_1\}, \ldots, \{u_m,w_m\}\}$ be a collection of pairwise disjoint missing edges of $\Gamma$ (possibly empty). Then there exists a map $\p:V(\Gamma)\to \R^3$ such that $\p(u_i)=-\p(w_i)$ for all $i=1,\ldots,m$ and the framework $(\Gamma, \p)$ is infinitesimally rigid.
\end{conjecture}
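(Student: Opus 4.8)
The plan is to attack the simplicial $2$-sphere case by induction on the number of vertices, using the classical reduction theory of $2$-spheres, and to flag the general simplicial $2$-manifold case as the genuinely hard one.

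It helps to reformulate the problem. Given the collection $M=\{\{u_1,w_1\},\dots,\{u_m,w_m\}\}$ of pairwise disjoint missing edges, let $L_M\subseteq(\R^3)^{V(\Gamma)}$ be the linear subspace cut out by the equations $\p(u_i)=-\p(w_i)$; since the pairs are disjoint, $L_M$ is linearly isomorphic to $(\R^3)^{f_0(\Gamma)-m}$, hence is an irreducible real affine space, so any finite intersection of nonempty Zariski-open subsets of $L_M$ is again nonempty and dense. The set of $\p\in L_M$ for which $(\Gamma,\p)$ is infinitesimally rigid is the non-vanishing locus in $L_M$ of the maximal ($(3f_0(\Gamma)-6)\times(3f_0(\Gamma)-6)$) minors of $\Rig(\Gamma,\p)$, hence Zariski-open; by Lemma~\ref{basic-rig-prop} the conjecture is therefore equivalent to the assertion that the rank of $\Rig(\Gamma,\cdot)$ equals $3f_0(\Gamma)-6$ at \emph{some}, hence at a generic, point of $L_M$. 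The case $m=0$ is exactly the statement that $2$-dimensional normal pseudomanifolds are generically $3$-rigid (Fogelsanger~\cite{Fogelsanger}), so all the content lies in forcing the prescribed antipodal identifications.

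For a simplicial $2$-sphere $\Gamma$ I would induct on $f_0(\Gamma)$, the base case being the boundary of a tetrahedron, where necessarily $M=\emptyset$ and any affinely independent realization works. If $\Gamma$ is not the boundary of a tetrahedron it has a vertex $v$ of degree $3$, $4$, or $5$. When $\deg(v)=3$, let $\Gamma'$ be obtained by deleting $v$ and filling in the triangle $\lk(v)$ (not a face of $\Gamma$, as $\Gamma$ is not a tetrahedron); when $\deg(v)\in\{4,5\}$, let $\Gamma'$ be obtained by contracting an edge $\{v,a\}$ for which $v$ and $a$ have exactly two common neighbours (such an $a$ exists because some vertex of $\lk(v)$ lies on no chord of the link cycle). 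In every case $\Gamma'$ is a simplicial $2$-sphere with $f_0(\Gamma)-1$ vertices, and the moves add no edge between two old vertices, so every pair of $M$ that meets neither $v$ nor $a$ persists as a missing edge of $\Gamma'$, while the at most two pairs that do meet $\{v,a\}$ are dropped, their partners becoming unconstrained in the new collection $M'$. By the inductive hypothesis $U':=\{\p'\in L_{M'} : (\Gamma',\p')\text{ infinitesimally rigid}\}$ is a nonempty Zariski-open subset of $L_{M'}$. I would then intersect $U'$ with the finitely many further Zariski-open conditions needed to reverse the last move: affine independence of the facets of $\Gamma$ and $\Gamma'$ used in the reconstruction, the genericity conditions under which the reconstruction outputs an infinitesimally rigid framework (a single application of the Gluing Lemma (Lemma~\ref{gluing-lemma}) when $\deg(v)=3$, exactly as in the proof of Lemma~\ref{lemma: star-of-face-rigid}, and a vertex split when $\deg(v)\in\{4,5\}$), and, when $v$ or $a$ was matched to a vertex $w$, the extra condition that its forced value $-\p'(w)$ lies in that open set. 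Each of these conditions is nonempty on the irreducible space $L_{M'}$: the edges of the link cycle of $v$, and more generally the edges of any facet involved, are actual edges of $\Gamma$ and hence not matched pairs, so those vertices have free coordinates in $L_{M'}$; and a dropped partner $w$ is unconstrained in $M'$, so $-\p'(w)$ ranges over all of $\R^3$. Choosing $\p'$ in this nonempty intersection and reinserting $v$ (and $a$) at the prescribed positions, the Gluing Lemma --- respectively the preservation of infinitesimal rigidity under vertex splitting --- yields a $\p\in L_M$ with $(\Gamma,\p)$ infinitesimally rigid.

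I expect the degree-$4$ and degree-$5$ reinsertion to be the main obstacle. Reinserting a degree-$4$ vertex by reversing a ``delete the vertex, triangulate the square hole with a diagonal'' move only shows directly, via the Gluing Lemma, that $(\Gamma\cup\{e\},\p)$ is infinitesimally rigid for an auxiliary diagonal $e\notin\Gamma$; descending to $(\Gamma,\p)$ requires $e$ to be redundant --- i.e.\ carried by a stress --- which is precisely where one must call on the full strength of the vertex-splitting theorem rather than a bare gluing argument, and making that theorem cooperate with a \emph{prescribed} position for a matched split-vertex is the delicate point. More serious still is the passage to general simplicial $2$-manifolds: there need be no vertex of degree at most $5$ (for instance the vertex-transitive $7$-vertex triangulation of the torus, in which every vertex has degree $6$), so the entire reduction scheme collapses, and one would instead have to adapt a global argument in the spirit of Fogelsanger's proof --- decomposing $\Gamma$ into overlapping infinitesimally rigid sub-pseudomanifolds and reassembling them via the Gluing Lemma --- while keeping the prescribed antipodal identifications intact throughout. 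That is where I would expect the conjecture to be genuinely open.
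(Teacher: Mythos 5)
You should first be clear about what the paper does with this statement: it is Conjecture \ref{partial-match}, posed as an open problem in the concluding section, not proved anywhere in the paper; indeed the paper follows it with a cautionary example (a connected sum of $\partial\C^*_3$ with two simplex boundaries) showing that the prescribed antipodal identifications can be unrealizable by \emph{any} polytopal embedding, so a proof cannot proceed through convex realizations. There is thus no proof of the paper's to compare yours against, and judged on its own your text is a plan with admitted holes rather than a proof --- which you acknowledge. The reformulation (infinitesimal rigidity is the nonvanishing of maximal minors of $\Rig(\Gamma,\cdot)$, $L_M$ is an irreducible linear space, so exhibiting one good point suffices), the appeal to Fogelsanger for $m=0$, and the degree-$3$ reinsertion via the Gluing Lemma are all sound.

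The genuine gaps are the one you flag and one you do not. (i) Reversing the contraction of $\{v,a\}$ is a vertex split, and the set of positions $q$ for the reinserted vertex $v$ making the split framework infinitesimally rigid is an open set \emph{depending on} $\p'$; when $v$ is matched, $q=-\p'(w_v)$ is slaved to $\p'$, and no known form of the vertex-splitting theorem (which places the new vertex generically, or at/near the old vertex and then perturbs) guarantees rigidity at such a prescribed point. Nonemptiness of the resulting locus inside $L_{M'}$ is precisely an instance of the statement being proved, so the induction is circular at exactly the step you call ``delicate.'' (ii) Your handling of a matched pair $\{a,w_a\}$ containing the \emph{surviving} endpoint $a$ of the contracted edge fails outright: you drop it from $M'$, but the final embedding agrees with $\p'$ on all old vertices, so $\p(a)=-\p(w_a)$ will simply not hold. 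Moreover, contraction does create new edges between old vertices (between $a$ and the former neighbours of $v$), contrary to your claim, and can even turn $\{a,w_a\}$ into an actual edge of $\Gamma'$, so the pair cannot always be retained in $M'$; nor can one always choose the contraction to avoid matched endpoints, since a degree-$4$ or degree-$5$ vertex may admit very few admissible contractions. (iii) As you note, general simplicial $2$-manifolds are untouched, since no low-degree vertex need exist. So the conjecture remains open; your sketch correctly locates the difficulty but does not close it.
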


{An indication that Conjecture \ref{partial-match} might be difficult comes from the following observation: while every simplicial $2$-sphere can be realized as the boundary complex of a $3$-polytope, there exists a simplicial $2$-sphere $\Gamma$ and a collection $\{\{u_1,w_1\},...,\{u_m,w_m\}\}$ as in the statement of the conjecture such that for every realization $\p$ of $\Gamma$ as the boundary complex of a polytope, $\p(u_j) \neq -\p(w_j)$ for some $1\leq j\leq m$. As an example, consider the connected sum of the boundary complex of the cross-polytope on the vertex set $\{u_1,u_2,u_3,w_1,w_2,w_3\}$ with the boundary complexes of the simplices on the vertex sets $\{u_1,u_2,u_3,u_4\}$ and $\{w_1,u_2,u_3, w_4\}$, respectively.}

\subsection{Higher $g$-numbers}
The definition of $g_2=f_1-df_0+{d+1 \choose 2}$ naturally extends to the notion of higher $g$-numbers: for a $(d-1)$-dimensional simplicial complex $\Delta$, the $h$-polynomial, $h(\Delta,x)=\sum_{i=0}^d h_i(\Delta)x^{d-i}$, is defined by
\[\sum_{i=0}^d h_i(\Delta)x^{d-i}=\sum_{j=0}^d f_{j-1}(\Delta) (x-1)^{d-j}.\]
Here $f_{j-1}(\Delta)$ denotes the number of $(j-1)$-dimensional faces in $\Delta$; in particular, {$f_{-1}(\Delta)=1$.} The $g$-numbers of $\Delta$ are then defined as $g_r(\Delta):=h_r(\Delta)-h_{r-1}(\Delta)$; that is, $$g_r(\Delta)=\sum_{j=0}^r (-1)^{r-j}f_{j-1}(\Delta){d-j+1 \choose r-j}.$$ For a simplicial $d$-polytope $P$, we write $h_r(P)$ and $g_r(P)$ instead of $h_r(\partial P)$ and $g_r(\partial P)$, respectively.  Since $h(\C^*_d,x) = (1+x)^d$, it follows that $g_r(\C^*_d) = {d \choose r} - {d \choose r-1}$ for all $r \leq \lfloor \frac{d}{2} \rfloor$.

It follows from the $g$-theorem \cite{Stanley-gthm} that if $P$ is a simplicial $d$-polytope with $g_r(P) = 0$ for some $1 \leq r < \halfd$, then $g_{r+1}(P) = 0$.  This, together with Stanley's \cite{Stanley-cs} result that  $g_r(P)\geq {d \choose r}-{d \choose r-1}$ for a cs simplicial $d$-polytope, motivates the following conjecture. (Note that the case of $r=1$ is easy and the main result of this paper establishes the case of $r=2$.)

\begin{conjecture} \label{g_r}
Let $P$ be a  cs simplicial $d$-polytope. If $g_r(P) = g_r(\C^*_d)$ for some $3 \leq r < \halfd$, then $g_{r+1}(P) = g_{r+1}(\C^*_d)$.
\end{conjecture}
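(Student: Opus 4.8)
The plan is to combine the hard Lefschetz theorem for simplicial polytopes with the $\Z/2$‑symmetry of the face ring. Work over a field $\field$ of characteristic zero, write $R=\field[\partial P]$ for the Stanley--Reisner ring, and let $\theta_1,\dots,\theta_d$ be the linear system of parameters given by the vertex coordinates of $P$; since $P$ is centrally symmetric these forms are \emph{anti‑invariant} under the involution $\alpha$, so the Artinian reduction $A:=R/(\theta)$ inherits a grading‑preserving action of $\Z/2$ with $\dim_\field A_i=h_i(P)$. Decompose $A=A^+\oplus A^-$ into the $\pm1$‑eigenspaces of $\alpha$. The linear form $\ell:=\sum_{v}x_v$ is $\alpha$‑invariant, so --- granting that it (or, after a symmetric perturbation, some invariant linear form) is a Lefschetz element for $A$, which I expect for all centrally symmetric polytopes and which is immediate for $\C^*_d$ --- multiplication by $\ell$ preserves the eigenspaces and is injective up through degree $\halfd$ on each of them. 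Hence, for $r\le\halfd$, $g_r(P)=g_r^+ + g_r^-$ with $g_r^\pm:=\dim_\field(A^\pm/\ell A^\pm)_r\ge0$, and for $P=\C^*_d$ one checks directly that $A^-=0$ and $g_r^+=g_r(\C^*_d)=\binom dr-\binom d{r-1}$.

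The first key point is a structural identity: expanding a linear form as $u+w$ with $u\in A_1^+$, $w\in A_1^-$, and noting that an even number of $w$‑factors lands in $A^+$, one gets $A_r^- = A_{r-1}^+\cdot A_1^-$, i.e.\ $A^-$ is generated in degree $1$ as a module over the invariant subring $A^+$. Passing to the quotient by $\ell$, the sequence $(g_r^-)_r$ is therefore the Hilbert function of a module over $A^+/\ell A^+$ generated in degree $1$. The second key point --- the part of Stanley's lower bound that refines to the eigenspaces --- should be that $g_r^+\ge g_r(\C^*_d)$ for all $r\le\halfd$, which I would obtain either by rerunning the symmetric computation of \cite{Sanyal-et-al} one ``stress degree'' at a time, or by extracting it from Stanley's original argument in \cite{Stanley-cs}. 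Granting both points, the hypothesis $g_r(P)=g_r(\C^*_d)$ forces \emph{simultaneously} $g_r^-=0$ and $g_r^+=g_r(\C^*_d)$, and it remains to propagate each equality one degree up: one wants $g_r^-=0\Rightarrow g_{r+1}^-=0$ and $g_r^+=g_r(\C^*_d)\Rightarrow g_{r+1}^+=g_{r+1}(\C^*_d)$, for then $g_{r+1}(P)=g_{r+1}^+ + g_{r+1}^- = g_{r+1}(\C^*_d)+0$, which is the assertion of the conjecture.

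Were $A^+/\ell A^+$ standard graded, the $-1$ propagation would be immediate: a module $M$ generated in degree $1$ over a standard graded algebra $B$ satisfies $M_{r+1}=B_1M_r$, so $M_r=0\Rightarrow M_{r+1}=0$; and the $+1$ propagation would follow from Macaulay's theorem applied to the M‑sequence $(g_r^+)_r$. The main obstacle is exactly that $A^+$ is \emph{not} standard graded --- it needs generators in degree $2$ (coming from $(A_1^-)^2\subseteq A_2^+$) --- so both shortcuts fail. I would try to get around this by analyzing $A^+$ as a module over the standard graded subalgebra $B:=\field[A_1^+]$ it contains, combining the decomposition $A_r^+=\bigoplus_k (A_1^-)^{2k}\,B_{r-2k}$ with the Lefschetz property; a cleaner but more ambitious reformulation is to seek a graded surjection $A/\ell A\twoheadrightarrow \field[x_1,\dots,x_d]/(x_1^2,\dots,x_d^2,\,x_1+\cdots+x_d)$ whose kernel is generated in degree $1$, in which case $g_r(P)-g_r(\C^*_d)$ is the Hilbert function of that kernel (an ideal in the standard graded algebra $A/\ell A$) and the conjecture follows at once from the elementary module argument above.

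Establishing that this kernel --- equivalently, the module of symmetric $r$‑stresses --- really is generated in low degree is where I expect the real difficulty to lie. As the authors note in Section~8, the underlying obstruction is that links of centrally symmetric complexes need not be centrally symmetric, so the usual dimension‑reducing induction on $d$ (via the Cone and Gluing Lemmas, which powered the $r=2$ case) is unavailable. Some genuinely new input will be needed: either a centrally symmetric refinement of the Hall--Laman relations / mixed Lefschetz theory adapted to the $\alpha$‑eigenspace decomposition, or a direct combinatorial analysis of the missing faces of $P$ and its links in the spirit of Sections~\ref{section:5}--\ref{section:7}, now carried out one degree higher so as to pin down the degree‑$2$ part of the invariant subring.
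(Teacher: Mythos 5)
You are attempting to prove Conjecture \ref{g_r}, which the paper leaves \emph{open}: it offers no proof, only the motivation coming from the $g$-theorem in the non-cs case and Stanley's inequality, and it points to the stronger Conjecture \ref{r-stackedness} as a possible route, proving only a very special case of the latter (Stanley-regular subdivisions of $\partial\C^*_d$, via local $h$-vectors). So there is no paper proof to compare against, and your text must stand on its own; as written it is a program with several acknowledged holes rather than a proof.

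Concretely, three steps are asserted but not established. First, you need an $\alpha$-invariant Lefschetz element for the Artinian reduction $A$ by the (anti-invariant) coordinate l.s.o.p.: the $g$-theorem provides Lefschetz elements generically, but the invariant forms are a \emph{proper} subspace of $A_1$, and a nonempty Zariski-open locus need not meet a proper subspace; for the specific form $\ell=\sum_v x_v$ nothing is known. Second, the eigenspace refinement $g_r^+\geq g_r(\C^*_d)$ of Stanley's inequality is precisely the kind of statement that requires a new argument; ``rerunning'' \cite{Stanley-cs} or \cite{Sanyal-et-al} degree by degree is not carried out, and the latter is a rigidity argument specific to $r=2$. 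Third, and most importantly, the propagation from degree $r$ to degree $r+1$ is the entire content of the conjecture, and you correctly observe that both of your intended mechanisms fail: the ``module generated in degree one'' argument for $g^-$ and Macaulay's bound for $g^+$ both need $A^+/\ell A^+$ to be standard graded, which it is not (it requires degree-$2$ generators coming from $(A_1^-)^2$). The proposed repair --- a graded surjection $A/\ell A\twoheadrightarrow \field[x_1,\dots,x_d]/(x_1^2,\dots,x_d^2,\,x_1+\cdots+x_d)$ with kernel generated in degree one --- is not constructed, and it is essentially a reformulation of what must be proved, comparable in strength to Conjecture \ref{r-stackedness} itself. The computations you do carry out (anti-invariance of the coordinate l.s.o.p., $A_r^-=A_{r-1}^+A_1^-$, $A^-=0$ for $\C^*_d$) are correct but do not close any of these gaps, so the statement remains open after your argument; the honest framing is that you have isolated where the difficulty sits (the non-standard grading of the invariant ring, equivalently the degree-$2$ symmetric stresses), not that you have a proof.
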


In view of the equality case of the Generalized Lower Bound Theorem due to Murai and Nevo \cite{Murai-Nevo}, it is natural to posit the following generalization of Theorem \ref{main-thm}, which would imply Conjecture \ref{g_r}. We refer our readers to Ziegler's book \cite[Section 8.1]{Ziegler} for the definition of a polytopal complex. We also recall that the $i$-skeleton of a simplicial complex $\Delta$ is $\skel_i(\Delta):= \{ \tau\in\Delta \, : \, \dim\tau \leq i\}$.

\begin{conjecture} \label{r-stackedness}
Let $P$ be a  cs simplicial $d$-polytope, and assume that $g_r(P) = {d \choose r}-{d \choose r-1}$ for some $3 \leq r \leq \halfd$. Then there exists a unique polytopal complex $\C$ in $\R^d$ with the following properties: (i) one of the faces of $\C$ is the cross-polytope $\C^*_d$, all other faces of $\C$ are simplices that come in antipodal pairs; (ii) $\C$ is a ``cellulation" of $P$, that is, $\bigcup_{C\in\C}C=P$, and (iii) each element $C\in \C$ of dimension $\leq d-r$ is a face of $P$. Furthermore, the collection of simplices of $\C$ consists of all proper faces of $P$ along with all simplices $\conv(U)$ with $U\subset V(P)$, such that the $(d-r)$-skeleton of $\overline{U}$ is contained in $\partial P$.
\end{conjecture}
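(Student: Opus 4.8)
\medskip
\noindent{\it Proof proposal.}
The plan is to run, one level up, the strategy that settled the case $r=2$: replace framework (first-order) rigidity by the higher-order, $r$-stress machinery underlying the Generalized Lower Bound Theorem of Murai--Nevo \cite{Murai-Nevo}, while keeping careful track of the $\Z/2$-action throughout. I would first reduce to the case that $\partial P$ has no missing face of dimension at most $d-r$ --- call this the \emph{$(r-1)$-prime} case. A centrally symmetric analogue of Lemma~\ref{no-missing-facets}, cutting $P$ along antipodal pairs of such missing faces, should exhibit a general $P$ with $g_r(P)=\binom d r-\binom d{r-1}$ as being built from an $(r-1)$-prime polytope with the same $g_r$ by symmetric ``$(r-1)$-stacking'' moves; the cells these moves introduce become the simplices of $\mathcal C$ not already in $\partial P$. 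Since the last sentence of the conjecture determines $\mathcal C$ completely, uniqueness is then automatic, and the content reduces to (a) checking that the listed simplices, together with one copy of $\C^*_d$, really form a polytopal complex whose union is $P$, and (b) the $(r-1)$-stackedness assertion~(iii).

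The algebraic input I would use is a centrally symmetric refinement of McMullen's weight algebra / the theory of $r$-stresses. The Sanyal--Werner--Ziegler computation behind Lemma~\ref{lemma: g2-lower-bound} has an $r$-analogue: a bound $\dim\Stress_r^{\mathrm{sym}}(P)\ge\tfrac12\dim\Stress_r(P)+\tfrac12\bigl(\binom d r-\binom d{r-1}\bigr)$, the natural generalization of inequality~\eqref{eq: symm-stress-dim}, which when fed through the arithmetic in the proof of Lemma~\ref{lemma: g2-lower-bound} reproves $g_r(P)\ge\binom d r-\binom d{r-1}$ and, crucially, shows that equality forces $\Stress_r(P)=\Stress_r^{\mathrm{sym}}(P)$ --- the higher counterpart of Corollary~\ref{cor: symmetric-stresses}. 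Granting this, the symmetry of $r$-stresses, applied to missing faces of dimension between $d-r$ and $d-2$ and propagated through links (so that $\lk_P(\tau)$ is $(r-1-|\tau|)$-prime, the analogue of Corollary~\ref{cor: prime-links}), should pin down enough combinatorics to locate an induced copy of $\partial\C^*_d$ inside $\partial P$ and to identify the faces of $P$ of dimension $\ge d-r+1$ as precisely those obtained by triangulating the region between $\partial\C^*_d$ and $\partial P$ with simplices $\conv(U)$ satisfying $\skel_{d-r}(\overline U)\subseteq\partial P$.

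With the cross-polytope $\C^*_d\subset P$ located, I would take it as the unique non-simplex cell and appeal to the relative form of the Murai--Nevo correspondence between vanishing $g$-numbers and $(r-1)$-stacked triangulations: the identity $g_r(P)=g_r(\C^*_d)$ should say exactly that $\partial P$ is ``$(r-1)$-stacked over $\partial\C^*_d$'', which by definition yields a cellulation of $P$ whose only non-simplex cell is $\C^*_d$, all of whose cells of dimension $\le d-r$ are faces of $P$ (property~(iii)), and whose simplices, by the uniqueness part of \cite{Murai-Nevo}, are precisely the proper faces of $P$ together with the $\conv(U)$ with $\skel_{d-r}(\overline U)\subseteq\partial P$. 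Properties~(i) and~(ii) are then immediate, and the induction from the first paragraph closes the argument; when $P=\C^*_d$ the complex $\mathcal C$ is $\C^*_d$ together with its proper faces, which is the base case.

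I expect the main obstacle to be the higher-$r$ rigidity and algebra. Whiteley's theorem (Theorem~\ref{Whiteley-thm}) and the Cone and Gluing Lemmas, on which Sections~\ref{section:5}--\ref{section:7} rely, have no fully developed counterpart for $r\ge3$: the relevant tools are McMullen's polytope algebra, generic $r$-hyperconnectivity, and the strong Lefschetz property for Stanley--Reisner rings, and in the centrally symmetric setting one must in addition control the $\Z/2$-module structure of all of them. Establishing $\Stress_r(P)=\Stress_r^{\mathrm{sym}}(P)$ rigorously, and the ``$\lk_P(\tau)$ is $(r-1-|\tau|)$-prime'' inductive step --- whose $r=2$ instance already required Swartz's operation and a delicate rigidity argument (Theorem~\ref{thm: thmB}, Corollary~\ref{cor: prime-links}) --- are where essentially all the difficulty concentrates; the reduction to the $(r-1)$-prime case and the assembly of $\mathcal C$ from the pieces should be comparatively routine generalizations of the present arguments. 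This is precisely why the statement is posed here as a conjecture: a full proof appears to need a workable centrally symmetric Generalized Lower Bound machinery that is not yet available.
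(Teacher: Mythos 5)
What you have written is a research program, not a proof, and you say so yourself in the last paragraph --- which is the correct assessment, because this statement is posed in the paper as a conjecture and the paper contains no proof of it either. Every load-bearing step in your outline is exactly the missing machinery: the higher-$r$ analogue of inequality~\eqref{eq: symm-stress-dim} and the resulting ``equality forces $\Stress_r=\Stress_r^{\mathrm{sym}}$'' statement have not been established; the ``relative form of the Murai--Nevo correspondence'' asserting that $g_r(P)=g_r(\C^*_d)$ means $\partial P$ is $(r-1)$-stacked over $\partial\C^*_d$ is not a theorem anyone can appeal to --- it is essentially a restatement of the conjecture itself; and the inductive primeness-of-links step has no known analogue for $r\geq 3$ (already for $r=2$ it needed Swartz's operation and Theorem~\ref{thm: thmB}). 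One smaller inaccuracy: you call uniqueness ``automatic'' because the last sentence determines $\C$, but that sentence is the \emph{furthermore} part, which itself requires proof; the paper sketches how to get it (given existence) by coning off the cross-polytopal cell and the boundary and then using Alexander duality and Mayer--Vietoris, following the non-cs arguments of Bagchi--Datta, Murai--Nevo, and Klee--Novik.

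For comparison, the paper's actual partial progress goes a different and much more modest route than yours: it proves the existence part only when $\partial P$ is a Stanley-regular subdivision of $\partial\C^*_d$ respecting the symmetry, by using nonnegativity, symmetry, and unimodality of Stanley's local $h$-polynomials together with eq.~\eqref{ell-h} to show that no face of $\partial\C^*_d$ of dimension at most $d-r$ can carry a new vertex; this decomposes $P$ into the cross-polytope plus antipodal pairs of polytopes with $g_r=0$, to each of which the (absolute, known) Murai--Nevo theorem applies. Your plan, if the cs higher-stress machinery existed, would be more general, but as it stands the gaps you flag are not technical details to be filled in later --- they are the open problem.
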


Assuming the existence part of Conjecture \ref{r-stackedness}, the proof of the uniqueness and of the furthermore-part of this conjecture is very similar to the proofs of the analogous statements in the non-cs case, see {\cite[Thm.~2.20]{BagchiDatta-stellated}, \cite[Thm.~2.3]{Murai-Nevo}, and \cite[Thm.~5.17]{KleeNovik-balancedLBT}.} Indeed, let $\C$ be a complex satisfying conditions (i)--(iii) of the conjecture. As in the proof of {\cite[Thm.~5.17]{KleeNovik-balancedLBT}}, introduce a new vertex $w$ and replace the (unique) cross-polytopal face of $\C$ with a cone with apex $w$ over the boundary complex of this face. The resulting complex $B$ is a simplicial $d$-ball. Introduce one additional new vertex $v_0$ and let $\Lambda=B\cup (v_0 * \partial B)$ be the corresponding simplicial $d$-sphere. The proof of the furthermore-part now follows using the standard tools such as Alexander duality and the Mayer--Vietoris sequence. We omit the details.

We close {this subsection} with a proof of the existence part of Conjecture \ref{r-stackedness} in a certain special case.  In \cite{Stanley-sd}, Stanley studies the effect of subdivisions of simplicial complexes on their face numbers. In particular, for a $(d-1)$-dimensional complex $\Gamma$ that provides a subdivision of a $(d-1)$-simplex $\overline{V}$, Stanley introduces the notion of the {\em local $h$-vector} $\ell_V(\Gamma)=(\ell_0,\ldots,\ell_d)$ or  {\em local $h$-polynomial}
$\ell_V(\Gamma, x) =\ell_0+\ell_1 x + \cdots + \ell_d x^d$. He then proves that if $\Gamma$ is combinatorially equivalent to a {\em regular} subdivision of the simplex, then the vector $\ell_V(\Gamma)$ is non-negative, symmetric (that is, $\ell_i=\ell_{d-i}$ for all $i$)  and unimodal.

Given any subdivision $\Delta'$ of $\Delta$ (here both $\Delta$ and $\Delta'$ are simplicial complexes) and an arbitrary face $\tau$ of $\Delta$, one obtains an induced subdivision $\Delta'_\tau$ of $\overline{\tau}$ --- the restriction of $\Delta'$ to $\overline{\tau}$. Stanley \cite{Stanley-sd} furthermore proves that
\begin{equation} \label{ell-h}
 h(\Delta', x)=\sum_{\tau\in\Delta}\ell_{\tau}(\Delta'_\tau,x) h(\lk_\Delta(\tau),x).
\end{equation}

We say that a subdivision $\Delta'$ of $\Delta$ is {\em Stanley-regular} if for every face $\tau$ of $\Delta$, $\Delta'_\tau$ is combinatorially equivalent to a regular subdivision of the simplex $\overline{\tau}$. In particular, for a vertex $v$ of $\Delta$, $\Delta'_v$ is a single vertex of $\Delta'$; we identify this vertex with $v$ and refer to all vertices of $\Delta'$ that are not vertices of $\Delta$ as {\em new vertices}. We note that for each new vertex $v'$, there is a unique face $\tau$ of $\Delta$ such that $v'$ is an interior vertex of the ball $\Delta'_\tau$; this face $\tau$ is called the {\em carrier} of $v'$.

Using Stanley's results on the local $h$-vectors of regular subdivisions we are now in a position to prove the following special case of Conjecture \ref{r-stackedness}.

\begin{proposition}
Let $P$ be a cs simplicial $d$-polytope such that $\Delta':=\partial P$ is a Stanley-regular subdivision of $\Delta:=\partial C^*_d$ that respects central symmetry. If $g_r(P)=g_r(\C^*_d)$ for some $3\leq r\leq d/2$, then $P$ has a cellulation as in Conjecture \ref{r-stackedness}.
\end{proposition}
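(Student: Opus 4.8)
The plan is to extract the cellulation $\C$ directly from Stanley's local $h$-vector identity \eqref{ell-h} applied to the subdivision $\Delta' = \partial P$ of $\Delta = \partial \C^*_d$. First I would compute $g_r(P)$ via \eqref{ell-h}: taking the $g$-polynomial version of that identity (i.e., passing from $h$ to $g$ in the usual way, using that the links $\lk_\Delta(\tau)$ are boundary complexes of cross-polytopes or simplices and so have explicitly known $h$-polynomials), one obtains
\[
g_r(P) = g_r(\C^*_d) + \sum_{\tau \in \Delta} c_{r,\tau}\,\ell_\tau(\Delta'_\tau, 1)\text{-type terms},
\]
where each contribution from a face $\tau$ with nontrivial induced subdivision $\Delta'_\tau$ is a nonnegative combination of the entries $\ell_i(\Delta'_\tau)$ with $i \le r$. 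Here I use Stanley's theorem that, because the subdivision is Stanley-regular, every $\ell_\tau(\Delta'_\tau)$ is nonnegative, symmetric, and unimodal. The hypothesis $g_r(P) = g_r(\C^*_d)$ then forces $\ell_i(\Delta'_\tau) = 0$ for all $i \le r$ (equivalently, by symmetry, for all $i \ge d - r$ as well), for every face $\tau$ of $\Delta$ of dimension $\le d - r - 1$ or so; more precisely the vanishing must propagate to all faces $\tau$ whose local contribution could be detected at level $r$.

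Second, I would translate this vanishing of low local $h$-entries into the geometric statement that $\Delta'_\tau$ introduces no new vertices in the interior of $\overline{\tau}$ when $\dim \tau$ is small — specifically, $\ell_1(\Delta'_\tau) = 0$ is exactly the condition that $\tau$ has no new vertex in its carrier-interior, so the $(d-r)$-skeleton of $\Delta$ is left untouched by the subdivision, i.e.\ $\skel_{d-r}(\partial \C^*_d) \subseteq \partial P$ after the natural identification. This is the content needed for property (iii). For the cellulation itself: I would let $\C$ consist of the cross-polytope $\C^*_d$ together with, for each facet $F$ of $\C^*_d$ (these come in antipodal pairs $F, -F$), the restriction $\Delta'_F$ of the subdivision to that facet. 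Since $\partial P$ subdivides $\partial \C^*_d = \bigcup_F F$ and each $F$ is a simplex, the pieces $\Delta'_F$ are simplicial subdivisions of simplices, glued consistently along shared faces (which are themselves unsubdivided by the skeleton condition), so $\bigcup_{C \in \C} C = \bigcup_F F \cup \C^*_d = \C^*_d = P$ up to the identification of $P$ with the geometric cross-polytope it subdivides; this gives (ii), and central symmetry of the subdivision gives the antipodal pairing in (i). Finally the "furthermore" description of which simplices appear follows as indicated in the paragraph preceding the proposition, using Alexander duality on the auxiliary sphere $\Lambda$.

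The main obstacle I anticipate is the bookkeeping in the first step: one must check that the coefficient of $\ell_i(\Delta'_\tau)$ in the expansion of $g_r(P) - g_r(\C^*_d)$ is strictly positive (not merely nonnegative) precisely in the range of $(i, \tau)$ needed, so that equality genuinely forces all the relevant local $h$-entries to vanish. This requires knowing $g(\lk_\Delta(\tau), x)$ for $\tau$ a face of $\partial \C^*_d$ — the link is $\partial \C^*_{d - |\tau|}$ when $\tau$ contains no antipodal pair and is the boundary of a simplex otherwise — and carefully tracking which degree-$r$ coefficient each term $\ell_\tau(\Delta'_\tau, x) \cdot g(\lk_\Delta(\tau), x)$ feeds into. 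A secondary, more routine point is verifying that the skeleton condition "$\skel_{d-r}(\overline{U}) \subseteq \partial P$" characterizes exactly the simplices of $\C$ and matches the geometric cellulation; this parallels the non-cs arguments in \cite{BagchiDatta-stellated} and \cite{Murai-Nevo} closely enough that it should go through without surprises once the local $h$-vanishing is in hand.
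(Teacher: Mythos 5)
Your first step is essentially the paper's: from Stanley's identity \eqref{ell-h}, the nonnegativity, symmetry and unimodality of the local $h$-polynomials, and $h(\lk_\Delta(\tau),x)=(1+x)^{d-|\tau|}$, one shows that no face $\tau$ of $\Delta=\partial\C^*_d$ with $1\le\dim\tau\le d-r$ can carry a new vertex (if it did, $\ell_1\ge 1>\ell_0=0$ would force the coefficient of $x^r$ in $\ell_\tau(\Delta'_\tau,x)(1+x)^{d-|\tau|}$ to strictly exceed that of $x^{r-1}$, giving $g_r(P)>g_r(\C^*_d)$). So, modulo the vagueness about exactly which $\ell_i$ vanish, your conclusion that the subdivision leaves the $(d-r)$-skeleton of $\Delta$ untouched is correct and matches the paper. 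Note, though, that for $r\ge 3$ this says nothing about faces of dimension $d-r+1,\dots,d-2$; in particular ridges of $\C^*_d$ may well be subdivided, so your parenthetical claim that the facets are glued along unsubdivided faces is not justified.

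The genuine gap is in the construction of $\C$. You identify $P$ with the geometric cross-polytope it combinatorially subdivides and take $\C$ to be $\C^*_d$ together with the $(d-1)$-dimensional complexes $\Delta'_F$ on the facets $F$. But $P$ is a simplicial polytope: the new vertices of the subdivision are genuine vertices of $P$, hence in convex position, so $P$ strictly contains the cross-polytope $P_0$ spanned by the $2d$ original vertices, and between each flat facet of $P_0$ and the corresponding subdivided patch of $\partial P$ there is a full-dimensional cap. Your proposed $\C$ contains no $d$-dimensional simplices to fill these caps, so condition (ii) fails, and nothing in your argument produces a filling whose cells of dimension $\le d-r$ are faces of $P$. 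This is exactly where the paper does the real work: it cuts $P$ along the geometric simplices $\conv(U)$ spanned by the original vertex sets of the subdivided facets, obtaining a decomposition of $P$ into $P_0$ and caps $P_1,\dots,P_{2m}$ coming in antipodal pairs; since $P$ is the connected sum of these pieces, $g_r(P)=g_r(P_0)$ and $g_r(P_i)\ge 0$ (the $g$-theorem) force $g_r(P_i)=0$ for $i>0$; and then the equality case of the Generalized Lower Bound Theorem of Murai and Nevo \cite{Murai-Nevo} supplies triangulations $T_i$ of the caps with $\skel_{d-r}(T_i)=\skel_{d-r}(P_i)\subseteq\skel_{d-r}(P)$, and $\C$ is taken to be $P_0$ together with all faces of the $T_i$. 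The connected-sum reduction and the appeal to \cite{Murai-Nevo} are the missing ideas; without them the cellulation is not constructed at all.
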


\begin{proof}
Let $\tau$ be a face of $\Delta$. Then the link of $\tau$ in $\Delta$ is the boundary complex of a $(d-|\tau|)$-dimensional cross-polytope, and so $h(\lk_\Delta(\tau),x)=(1+x)^{d-|\tau|}$. We now concentrate on $\ell_{\tau}(\Delta'_\tau, x)=\sum_{i=0}^{|\tau|}\ell_i x^i$.  We claim that if $1\leq \dim\tau \leq d-r$, then $\tau$ is not a carrier of any new vertex. Indeed, if $\tau$ is a carrier of a new vertex, then $\Delta'_\tau$ has at least one interior vertex, and so by \cite[Example 2.3(f)]{Stanley-sd}, $\ell_1\geq 1$ while $\ell_0=0$. Since $\ell_{\tau}(\Delta'_\tau, x)$ is non-negative, symmetric, and unimodal, and since $\ell_1>\ell_0$, it is not hard to see from our assumptions on $\dim\tau$ and $r$ (say,  by using \cite[eq.~(1)]{Andrews}) that the coefficient of $x^r$ in  $\ell_{\tau}(\Delta'_\tau,x) h(\lk_\Delta(\tau),x)=\ell_{\tau}(\Delta'_\tau,x) (1+x)^{d-|\tau|}$ is strictly larger than that of $x^{r-1}$. Since for all other faces $\sigma\in\Delta$, $\ell_{\sigma}(\Delta'_\sigma, x)$ is also non-negative, symmetric, and unimodal and since  $\ell_{\emptyset}(\Delta'_\emptyset,x) h(\lk_\Delta(\emptyset),x)=h(\C^*_d, x)$, eq.~(\ref{ell-h}) then implies that $g_r(P)>g_r(\C^*_d)$. This however contradicts
the assumption of the proposition. Thus, no face of $\Delta$ of dimension $\leq d-r$ can carry a new vertex, that is, via our identification of vertices, for every face $\tau\in\Delta$ with $\dim\tau\leq d-r$, $\Delta'_\tau$ is the simplex $\overline{\tau}$.

We now decompose $P$ as follows: for each facet $\tau$ of $\Delta$ such that $\Delta'_\tau$ contains at least one new vertex (not necessarily in the interior), let $U$ be the set of vertices of $P$ that are identified with $V(\tau)$, and consider the corresponding geometric $(d-1)$-simplex $\conv(U) \subset P$. (Note that if $\Delta'_\tau$ does not contain a new vertex, then $\conv(U)$ is a facet of $P$.) By the first paragraph of this proof, all faces of dimension at most $d-r$ in these geometric simplices are faces of $P$. Furthermore, these simplices partition $P$ into several simplicial polytopes $P_0, P_1,\ldots, P_{2m}$ one of which, say, $P_0$ is the cross-polytope, and the others come in antipodal pairs: $P_{2i-1}=-P_{2i}$. Since $P$ is the connected sum of these polytopes, since $g_r(P)=g_r(\C^*_d)=g_r(P_0)$, and since $g_r(P_i)\geq 0$ (for all $i$) by the $g$-theorem \cite{Stanley-gthm}, it follows that all polytopes in this decomposition, but $P_0$, satisfy $g_r(P_i)=0$. Thus, by the main result of \cite{Murai-Nevo}, each $P_i$ (for $i>0$) has a triangulation $T_i$ such that $\skel_{d-r}(T_i)=\skel_{d-r}(P_i)\subseteq \skel_{d-r}(P)$. Taking $\C$ consist of $P_0$ along with all the faces of $T_i$ for $i=1,2,\ldots, 2m$ then provides a cellulation of $P$ that satisfies conditions (i)--(iii) of Conjecture \ref{r-stackedness}.
\end{proof}

\subsection{Polytopes with other symmetries}
In this paper we discussed centrally symmetric simplicial polytopes. Our starting point was Stanley's result \cite{Stanley-cs} asserting that a cs simplicial $d$-polytope $P$ satisfies $g_r(P)\geq {d \choose r}-{d \choose r-1}$ for all $1\leq r\leq \lfloor d/2\rfloor$. However, it is worth mentioning that Adin \cite{Adin95} and later Jorge \cite{Jorge2003} showed that Stanley's result can be suitably extended to polytopes with more intricate symmetries. It would be very interesting to check if our techniques can be adapted to provide a characterization of polytopes that minimize $g_2$ in these more general settings.

\section*{Acknowledgements} We are grateful to the referees for carefully reading our paper and providing many helpful suggestions.

%%%%%%%%%%%%%%%%%%%%%%%
%%%%%%%%%%%%%%%%%%%%%%%

%\section*{Acknowledgments}

{\small
\bibliography{csbib}

\begin{thebibliography}{10}

\bibitem{Adin95}
R.~M. Adin.
\newblock On face numbers of rational simplicial polytopes with symmetry.
\newblock {\em Adv. Math.}, 115(2):269--285, 1995.

\bibitem{AltPer}
A.~Altshuler and M.~A. Perles.
\newblock Quotient polytopes of cyclic polytopes. {I}. {S}tructure and
  characterization.
\newblock {\em Israel J. Math.}, 36(2):97--125, 1980.

\bibitem{Andrews}
G.~E. Andrews.
\newblock A theorem on reciprocal polynomials with applications to permutations
  and compositions.
\newblock {\em Amer. Math. Monthly}, 82(8):830--833, 1975.

\bibitem{AsimowRothI}
L.~Asimow and B.~Roth.
\newblock The rigidity of graphs.
\newblock {\em Trans. Amer. Math. Soc.}, 245:279--289, 1978.

\bibitem{AsimowRothII}
L.~Asimow and B.~Roth.
\newblock The rigidity of graphs. {II}.
\newblock {\em J. Math. Anal. Appl.}, 68(1):171--190, 1979.

\bibitem{BagchiDatta-stellated}
B.~Bagchi and B.~Datta.
\newblock On {$k$}-stellated and {$k$}-stacked spheres.
\newblock {\em Discrete Math.}, 313(20):2318--2329, 2013.

\bibitem{Barnette-LBT-pseudomanifolds}
D.~Barnette.
\newblock Graph theorems for manifolds.
\newblock {\em Israel J. Math.}, 16:62--72, 1973.

\bibitem{Barnette-LBT}
D.~Barnette.
\newblock A proof of the lower bound conjecture for convex polytopes.
\newblock {\em Pacific J. Math.}, 46:349--354, 1973.

\bibitem{Billera-Lee}
L.~Billera and C.~W. Lee.
\newblock A proof of the sufficiency of {M}c{M}ullen's conditions for
  {$f$}-vectors of simplicial convex polytopes.
\newblock {\em J. Combin. Theory Ser. A}, 31(3):237--255, 1981.

\bibitem{Fogelsanger}
A.~Fogelsanger.
\newblock {\em The generic rigidity of minimal cycles}.
\newblock ProQuest LLC, Ann Arbor, MI, 1988.
\newblock Thesis (Ph.D.)--Cornell University; available electronically at \\
  http://www.armadillodanceproject.com/af/cornell/rigidityintro.pdf.

\bibitem{Hatcher}
A.~Hatcher.
\newblock {\em Algebraic Topology}.
\newblock Cambridge University Press, 2002.

\bibitem{Jorge2003}
H.~A. Jorge.
\newblock Combinatorics of polytopes with a group of linear symmetries of prime
  power order.
\newblock {\em Discrete Comput. Geom.}, 30(4):529--542, 2003.

\bibitem{Kalai-rigidity}
G.~Kalai.
\newblock Rigidity and the lower bound theorem. {I}.
\newblock {\em Invent. Math.}, 88(1):125--151, 1987.

\bibitem{Klee-Novik}
S.~Klee and I.~Novik.
\newblock Centrally symmetric manifolds with few vertices.
\newblock {\em Adv. Math.}, 229(1):487--500, 2012.

\bibitem{KleeNovik-balancedLBT}
S.~Klee and I.~Novik.
\newblock Lower bound theorems and a generalized lower bound conjecture for
  balanced simplicial complexes.
\newblock {\em Mathematika}, 62(2):441--477, 2016.

\bibitem{Lee-94}
C.~W. Lee.
\newblock Generalized stress and motions.
\newblock In {\em Polytopes: abstract, convex and computational ({S}carborough,
  {ON}, 1993)}, volume 440 of {\em NATO Adv. Sci. Inst. Ser. C Math. Phys.
  Sci.}, pages 249--271. Kluwer Acad. Publ., Dordrecht, 1994.

\bibitem{Lee-notes}
C.W. Lee.
\newblock The $g$-theorem.
\newblock http://www.ms.uky.edu/$\sim$lee/ma715sp02/notes.pdf, 2002.

\bibitem{Murai-Nevo}
S.~Murai and E.~Nevo.
\newblock On the generalized lower bound conjecture for polytopes and spheres.
\newblock {\em Acta. {M}ath.}, 210:185--202, 2013.

\bibitem{Sanyal-et-al}
R.~Sanyal, A.~Werner, and G.~M. Ziegler.
\newblock On {K}alai's conjectures concerning centrally symmetric polytopes.
\newblock {\em Discrete Comput. Geom.}, 41(2):183--198, 2009.

\bibitem{Stanley-gthm}
R.~P. Stanley.
\newblock The number of faces of a simplicial convex polytope.
\newblock {\em Adv. in Math.}, 35(3):236--238, 1980.

\bibitem{Stanley-cs}
R.~P. Stanley.
\newblock On the number of faces of centrally-symmetric simplicial polytopes.
\newblock {\em Graphs and Combinatorics}, 3:55--66, 1987.

\bibitem{Stanley-sd}
R.~P. Stanley.
\newblock Subdivisions and local {$h$}-vectors.
\newblock {\em J. Amer. Math. Soc.}, 5(4):805--851, 1992.

\bibitem{Swartz-finiteness}
E.~Swartz.
\newblock Topological finiteness for edge-vertex enumeration.
\newblock {\em Adv. Math.}, 219(5):1722--1728, 2008.

\bibitem{Tay}
T.-S. Tay.
\newblock Lower-bound theorems for pseudomanifolds.
\newblock {\em Discrete Comput. Geom.}, 13(2):203--216, 1995.

\bibitem{Tay-et-al}
T.-S. Tay, N.~White, and W.~Whiteley.
\newblock Skeletal rigidity of simplicial complexes. {II}.
\newblock {\em European J. Combin.}, 16:503--523, 1995.

\bibitem{Walkup}
D.~Walkup.
\newblock The lower bound conjecture for {$3$}- and {$4$}-manifolds.
\newblock {\em Acta Math.}, 125:75--107, 1970.

\bibitem{Whiteley-84}
W.~Whiteley.
\newblock Infinitesimally rigid polyhedra. {I}. {S}tatics of frameworks.
\newblock {\em Trans. Amer. Math. Soc.}, 285(2):431--465, 1984.

\bibitem{Zheng-g2}
H.~Zheng.
\newblock A characterization of homology manifolds with {$g_2\le2$}.
\newblock {\em J. Combin. Theory Ser. A}, 153:31--45, 2018.

\bibitem{Ziegler}
G.~M. Ziegler.
\newblock {\em Lectures on polytopes}, volume 152 of {\em Graduate Texts in
  Mathematics}.
\newblock Springer-Verlag, New York, 1995.

\end{thebibliography}
\bibliographystyle{plain}
}
\end{document}